\newtheorem{thm}{Theorem}[section]
\newtheorem{defi}[thm]{Definition}
\newtheorem{lem}[thm]{Lemma}
\newtheorem{cor}[thm]{Corollary}
\newtheorem{rmk}[thm]{Remark}
\newtheorem{ques}[thm]{Question}
\def\Aut{{\rm Aut}}
\def\Hom{{\rm Hom}}
\def\ord{{\rm ord}}
\def\min{{\rm min}}
\def\max{{\rm max}}
\def\inf{{\rm inf}}
\def\sup{{\rm sup}}
\def\lim{{\rm lim}}
\def\dif{{\rm d}}
\def\Supp{{\rm Supp}}
\def\gr{{\rm gr}}
\def\pr{{\rm pr}}
\def\wt{{\rm wt}}
\def\Proj{{\rm Proj}}
\def\triv{{\rm triv}}
\def\Bl{{\rm Bl}}
\def\log{{\rm log}}
\def\vol{{\rm vol}}
\def\Val{{\rm Val}}
\def\lct{{\rm lct}}
\def\DH{{\rm DH}}
\def\LE{{\rm LE}}
\def\Ex{{\rm Ex}}
\def\QM{{\rm QM}}
\def\LC{{\rm LC}}
\def\lc{{\rm lc}}
\def\ac{{\rm ac}}
\def\s{{\rm s}}
\def\Bs{{\rm Bs}}
\def\l0{{ l_0}}
\def\lN{{l_0\IN}}
\def\BP{\mathbf{P}}
\def\BO{\mathbf{O}}
\def\Bv{\mathbf{v}}
\def\BL{\mathbf{L}}
\def\BD{\mathbf{D}}
\def\BH{\mathbf{H}}
\def\BF{\mathbf{F}}
\def\B0{\mathbf{0}}
\def\Bin{\mathbf{in}}
\def\tbeta{\tilde{\beta}}
\def\tGamma{\widetilde{\Gamma}}
\def\bg{\bar{g}}
\def\bY{\overline{Y}}
\def\bxi{\bar{\xi}}
\def\opi{{\overline{\pi}}}
\def\oPhi{{\overline{\Phi}}}
\def\oY{{\overline{Y}}}
\def\oE{{\overline{E}}}
\newcommand{\IA}{{\mathbb A}}
\newcommand{\IG}{{\mathbb G}}
\newcommand{\Ik}{{\mathbbm k}}
\newcommand{\IN}{{\mathbb N}}
\newcommand{\IP}{{\mathbb P}} 
\newcommand{\IQ}{{\mathbb Q}} 
\newcommand{\IR}{{\mathbb R}}
\newcommand{\IT}{{\mathbb T}}
\newcommand{\IZ}{{\mathbb Z}}
\newcommand{\CD}{{\mathcal D}} 
\newcommand{\CE}{{\mathcal E}}
\newcommand{\CF}{{\mathcal F}}
\newcommand{\CG}{{\mathcal G}} 
\newcommand{\CH}{{\mathcal H}}
\newcommand{\CI}{{\mathcal I}}
\newcommand{\CL}{{\mathcal L}}
\newcommand{\CO}{{\mathcal O}}
\newcommand{\CW}{{\mathcal W}}
\newcommand{\CX}{{\mathcal X}}
\newcommand{\CY}{{\mathcal Y}}
\newcommand{\fa}{\mathfrak{a}}
\newcommand{\fm}{\mathfrak{m}}
\newcommand{\seq}{\subseteq}
\newcommand{\la}{\langle}
\newcommand{\ra}{\rangle}
\newcommand{\bu}{\bullet}
\newcommand{\lam}{\lambda}
\newcommand{\D}{\Delta}
\newcommand{\vep}{\varepsilon}
\title{Generalized optimal degenerations of Fano varieties}
\author{Linsheng Wang}
\address{Shanghai Center for Mathematical Sciences, Fudan University, Shanghai, 200438, China}
\curraddr{}
\email{linsheng\_wang@fudan.edu.cn}
\thanks{}
\keywords{}
\date{}
\dedicatory{}
\begin{document}

\maketitle

\begin{abstract}
We prove a generalization of the algebraic version of Tian conjecture. Precisely, for any smooth strictly increasing function $g:\mathbb{R}\to\mathbb{R}_{>0}$ with ${\rm log}\circ g$ convex, we define the $\mathbf{H}^g$-invariant on a Fano variety $X$ generalizing the $\mathbf{H}$-invariant introduced by Tian-Zhang-Zhang-Zhu, and show that $\mathbf{H}^g$ admits a unique minimizer. Such a minimizer will induce the $g$-optimal degeneration of the Fano variety $X$, whose limit space admits a $g'$-soliton. We present an example of Fano threefold which has the same $g$-optimal degenerations for any $g$. 
\end{abstract}


\section{Introduction}
As predicted by \cite[Conjecture 9.1]{Tia97}, a normalized K\"ahler-Ricci flow $\omega_t$ on a Fano manifold $M$ will converge in the Cheeger-Gromov-Hausdorff topology to $(M_\infty,\omega_\infty)$ with mild singularities, where $\omega_\infty$ is a K\"ahler-Einstein metric or a K\"ahler-Ricci soliton on the smooth part of $M_\infty$. 
This conjecture was widely studied, and has been solved now, see \cite{TZ16, Bam18, CW20, WZ21}. The limit $M_\infty$ is called the {\it optimal degeneration} of the Fano manifold $M$. 

There is an algebraic version of the above conjecture splitting the optimal degeneration into two steps, that is, ``semistable degeneration'' and ``polystable degeneration'', which was solved by \cite{HL23,HL20,BLXZ23}. It focus on the minimization problem of the so-called $\BH$-invariants $h(X,\D)$ (Definition \ref{Definition: H^g invariant}). The $\BH$-invariant was first introduced by \cite{TZZZ13} for holomorphic vector fields in the study of K\"ahler-Ricci flow on Fano manifolds. It was generalized to any special $\IR$-test configuration and any filtration by \cite{DS20} and \cite{HL20} respectively. Then the optimal degeneration problem can be studied in an algebraic geometry manner. 
By \cite[Theorem 3.15]{BLXZ23} and \cite[Theorem 4.9]{HL20}, for any log Fano pair $(X,\D)$, there exists a unique quasi-monomial valuation $v_0$ minimizing the $\BH$-invariant $h(X,\D)$. It was proved by \cite[Corollary 5.7]{BLXZ23} that the associated graded ring $\gr_{v_0}R$ is finitely generated. Hence it induces a (multistep) special degeneration of $(X,\D)$ to some weighted K-semistable log Fano triple $(X_0,\D_0,\xi_0)$ (where $\xi_0$ is a vector field determined by $v_0$, see Remark \ref{Remark. log Fano triple induced by special valuation}). More precisely, the log Fano pair $(X_0,\D_0)$ is $e^{-\la\sim,\xi_0\ra}$-weighted K-semistable, where $\alpha \mapsto e^{-\la\alpha,\xi_0\ra}$ is a function defined on the moment polytope $\BP$ of $(X_0,\D_0)$ with respect to some torus action. Moreover, $(X_0,\D_0,\xi_0)$ will specially degenerate to a unique weighted K-polystable log Fano triple $(Y,\D_Y,\xi_0)$ by \cite[Theorem 1.3]{HL20}. Finally, it was proved by \cite[Theorem 1.3]{BLXZ23} that $(Y,\D_Y,\xi_0)$ admits a K\"ahler-Ricci soliton based on \cite[Theorem 1.7]{HL23}. 


In Han-Li's and Blum-Liu-Xu-Zhuang's proof of the second step of the conjecture, that is, the ``polystable degeneration'' step, they worked not only on K\"ahler-Ricci solitons, but also on $g_0$-solitons. Precisely, they showed that for any smooth weight function $g_0:\BP\to \IR_{>0}$ (\ref{Eqnarray. weight function}), any $g_0$-weighted K-semistable log Fano pair $(X,\D)$ will specially degenerate to a $g_0$-weighted K-polystable log Fano pair $(Y,\D_Y)$, which is $g_0$-weighted reduced uniformly K-stable by \cite[Theorem 1.3]{BLXZ23}, hence admits a $g_0$-soliton by \cite[Theorem 1.7]{HL23}. Motivated by these works, one may ask whether there is an associated ``semistable degeneration'' step in the algebraic version of Tian conjecture or not. 

In this paper, we give a generalization of the $\BH$-invariant, namely, the $\BH^g$-invariant for some
\begin{eqnarray}
\label{Eqnarray: function g}
\textrm{smooth strictly increasing function $g:\IR\to\IR_{>0}$ with $\log\circ g$ convex. }
\end{eqnarray}
This will lead to the ``semistable degeneration'' step asked in the previous paragraph. We aim to prove the following generalized version of Tian conjecture. 

\begin{thm}[Generalized Tian conjecture] 
\label{Theorem: Generalized Tian Conjecture}
Let $(X,\D)$ be a log Fano pair, and $g:\IR\to\IR_{>0}$ be a smooth strictly increasing function with $\log\circ g$ convex. Then the $\BH^g$-invariant (Definition \ref{Definition: H^g invariant}) of $(X,\D)$ admits a unique minimizer $v_0$, which is a special valuation (Theorem \ref{Theorem: special valuation, higher rank f.g.}), such that the (multistep) special degeneration $(X_0,\D_{0},\xi_0)$ induced by $v_0$ is $g'$-weighted K-semistable (precisely, $(X_0,\D_0)$ is $g'(-\la \sim,\xi_0\ra)$-weighted K-semistable).  
Moreover $(X_0,\D_{0},\xi_0)$ has a unique $g'$-weighted K-polystable special degeneration $(Y,\D_{Y}, \xi_0)$, which admits a $g'$-soliton metric. 
\end{thm}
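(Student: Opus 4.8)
The plan is to follow the two-step structure already visible in the K\"ahler-Ricci soliton case (the work of \cite{BLXZ23, HL20, HL23}) and adapt each ingredient to the $g$-weighted setting, the key point being that the minimization problem for $\BH^g$ should be reducible, after passing to the associated graded ring, to the $g'$-weighted stability theory on the degeneration. Concretely, I would first establish the variational properties of $\BH^g$: lower semicontinuity on the space of valuations with the appropriate topology, properness/coercivity so that a minimizing sequence has a limit, and strict convexity along geodesics (filtrations) — this last point is where the hypothesis that $\log\circ g$ is convex should enter decisively, since it is exactly what is needed to make the ``entropy-type'' term in $\BH^g$ convex and the whole functional strictly convex, paralleling how convexity of $-\log$ (i.e. the $g=\exp$ case implicitly) is used in \cite{BLXZ23}. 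Uniqueness of the minimizer $v_0$ then follows from strict convexity, and the fact that $v_0$ is quasi-monomial/special is the content of the cited Theorem \ref{Theorem: special valuation, higher rank f.g.}, so I would invoke that together with finite generation of the associated graded ring to produce the multistep special degeneration $(\CX,\D_\CX,\xi_0)$ with central fiber $(\CX_0,\D_{\CX_0},\xi_0)$.

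Next I would identify the derivative-type optimality condition: the minimizer $v_0$ of $\BH^g$ should, via an Euler–Lagrange / directional-derivative computation along special test configurations of the central fiber, force $(\CX_0,\D_{\CX_0},\xi_0)$ to be $g'$-weighted K-semistable. The mechanism is that the first variation of $\BH^g$ at $v_0$ in the direction of a test configuration of the degeneration is (up to a positive factor coming from differentiating $g$) the $g'$-weighted Futaki-type invariant $\Fut_{g'}$ of that test configuration; since $v_0$ is a minimizer this variation is $\ge 0$, which is precisely $g'$-weighted K-semistability of $(\CX_0,\D_{\CX_0},\xi_0)$ with respect to the soliton candidate vector field $\xi_0$ determined by $v_0$. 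Here one must be careful that $\xi_0$ is the honest minimizer of the relevant $g'$-weighted $\beta$- or $\BH$-type functional on the Reeb cone of $\CX_0$, so that the degeneration is ``optimal'' and not merely semistable; this is a compatibility statement between the rank-one reduction (choosing $\xi_0$) and the higher-rank valuation $v_0$, and it follows the template of \cite{BLXZ23}.

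For the final sentence, once $(\CX_0,\D_{\CX_0},\xi_0)$ is known to be $g'$-weighted K-semistable, I would simply quote the results recalled in the introduction: by \cite{HL23, BLXZ23} any $g'$-weighted K-semistable log Fano triple admits a special degeneration to a $g'$-weighted K-polystable triple $(Y,\D_Y,\xi_0)$ — note the Reeb vector $\xi_0$ is preserved under the degeneration — which is moreover $g'$-weighted reduced uniformly K-stable, hence carries a $g'$-soliton by \cite{HL23}. Uniqueness of $(Y,\D_Y,\xi_0)$ is again a $g'$-weighted K-polystability statement: two $g'$-weighted K-polystable degenerations of the same triple are isomorphic, which is the $g$-analogue of the uniqueness in \cite{LWX21, BLXZ23}; the smoothness/strict-increase of $g$ is used only through $g'>0$ (so the degeneration problem is genuinely a ``$g'$-weighted'' one of the type already treated) and through $\log\circ g$ convex in Step 1. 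The main obstacle I expect is the first step — proving strict convexity and lower semicontinuity of $\BH^g$ for this general class of $g$ and extracting from minimization the correct optimality condition in terms of $\Fut_{g'}$; once that is in place, the passage to $(Y,\D_Y,\xi_0)$ and the existence of the $g'$-soliton are essentially black-boxed from \cite{HL23, BLXZ23}.
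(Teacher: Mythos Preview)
Your overall architecture matches the paper's (convexity $\Rightarrow$ uniqueness; Euler--Lagrange at the minimizer $\Rightarrow$ $g'$-weighted K-semistability of the central fiber; then quote \cite{HL20, BLXZ23, HL23} for the polystable degeneration and the $g'$-soliton), and your identification of where $\log\circ g$ convex enters is correct. But two of your steps have real gaps compared to what the paper actually does.

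\textbf{Existence.} You propose ``lower semicontinuity on the space of valuations $+$ properness/coercivity $+$ extract a limit of a minimizing sequence''. The paper does \emph{not} argue this way, and it is unclear this route can be made to work: there is no evident topology on $\Val_X$ in which $\BH^g$ is lower semicontinuous and the sublevel sets are compact. Instead the paper (i) shows, by a Li--Xu \cite{LX14}-style MMP argument on test configurations, that $h^g(X,\D)$ can be computed using only special divisorial valuations (equivalently, lc places of $\IQ$-complements), and then (ii) invokes boundedness of complements \cite{Bir19, BLX19} to parametrize all such valuations by finitely many quasi-monomial simplicial cones on which $\tbeta^g$ is continuous and strictly convex, so the infimum is attained on one of them. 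This is the substantive content of the existence theorem and is absent from your plan.

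\textbf{Speciality.} You write that ``$v_0$ is quasi-monomial/special is the content of the cited Theorem~\ref{Theorem: special valuation, higher rank f.g.}''. That theorem only gives \emph{equivalent characterizations} of speciality; it does not say that an $\BH^g$-minimizer satisfies any of them. The paper's bridge is a weighted stability threshold: one shows (Lemma~\ref{Lemma: gHT: H-minimizer v_0 and delta^(g,v_0)}) that if $v_0$ minimizes $\BH^g$ then $\delta^{g',v_0}(X,\D)=1$ and $v_0$ computes it, and then applies \cite[Theorem~5.4]{BLXZ23} (finite generation for minimizers of weighted $\delta$) to conclude that $v_0$ is special. Your plan never mentions the weighted $\delta^{g',v_0}$-invariant, and without it there is no mechanism to deduce finite generation of $\gr_{v_0}R$ or the klt property of the central fiber; simply ``invoking'' Theorem~\ref{Theorem: special valuation, higher rank f.g.} is circular.

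The remaining parts of your sketch (strict convexity via H\"older and log-convexity of $g$; the first-variation computation yielding $\BD^{g',\xi_0}\ge 0$; and the black-boxed second step) are essentially what the paper does.
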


We say that $(Y,\D_{Y}, \xi_0)$ is the {\it $g$-optimal degeneration} of $(X,\D)$. 
The last statement of the theorem has been established by \cite{BLXZ23,HL20}. We aim to prove the first part of the theorem. 

\begin{rmk}\rm 
In the setting of $g$-optimal degenerations, the correct weighted stability notion is the $g'$-weighted K-stability, where $g'$ is the first order derivative of the function $g$. See Lemma \ref{Lemma: gHT: H-minimizer v_0 and delta^(g,v_0)} and Theorem \ref{Theorem: gHT: Weighted K-stability} for details. If we choose $g(x)=e^x$, then it reveals the ordinary optimal degeneration. In this case $g'(x)=g(x)$. 
\end{rmk}

The following theorem is an analog of \cite[Theorem 5.3]{HL20}, which is the key ingredient in finding $g$-optimal degenerations. 

\begin{thm}[Theorem \ref{Theorem: gHT: Weighted K-stability}]
\label{Theorem: intro. gHT: Weighted K-stability}
Let $v_0$ be a quasi-monomial valuation over $X$ with finitely generated associated graded ring $\gr_{v_0}R$, which induces a klt (multistep) special degeneration $(X_0,\D_{0},\xi_0)$. Then $v_0$ minimizes $\BH^g$ if and only if $(X_0,\D_{0},\xi_0)$ is $g'$-weighted K-semistable. 
\end{thm}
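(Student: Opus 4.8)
The plan is to follow the strategy of \cite[Theorem 5.3]{HL20} adapted to the $g$-weighted setting, working on the Rees degeneration to reduce the minimization problem to a statement about $g'$-weighted K-semistability of the central fiber. First I would set up the analytic objects: given the quasi-monomial valuation $v_0$ with finitely generated $\gr_{v_0}R$, the associated multistep special degeneration $(\CX,\D_\CX,\xi_0)$ has klt central fiber, and $v_0$ corresponds to a point $\xi_0$ in the Reeb cone of the torus $\IT$ acting on $(\CX_0,\D_{\CX,0})$. I would express $\BH^g$ restricted to valuations "compatible with" this degeneration — i.e. valuations of the form $v_{0,\eta}$ obtained by combining $v_0$ with a holomorphic vector field $\eta$ in the Reeb cone of $\CX_0$ — in terms of the Duistermaat–Heckman measure $\DH$ of the filtration induced by $v_0$, twisted by the weight $\xi_0$. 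The key identity to establish is that the derivative of $\BH^g$ along such directions $\eta$ at $\xi_0$ is, up to a positive normalizing factor, the $g'$-weighted Futaki invariant (equivalently, the derivative of the $g'$-weighted Ding/Mabuchi functional) of $(\CX_0,\D_{\CX,0},\xi_0)$. This is where the hypothesis that $\log\circ g$ is convex and $g$ strictly increasing enters: it guarantees that the Legendre-type transform relating $g$ and $g'$ behaves well, that $\BH^g$ is convex along the relevant geodesics, and that the stationarity condition for $\BH^g$ is exactly the vanishing of the $g'$-weighted Futaki invariant together with a semistability inequality in transverse directions.

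For the "only if" direction: suppose $v_0$ minimizes $\BH^g$. By the finite generation hypothesis we may test against all special test configurations of $(\CX_0,\D_{\CX,0})$ that are $\IT$-equivariant, and against all perturbations of the Reeb vector $\xi_0$ within the Reeb cone. Minimality of $\BH^g$ forces the first variation in the Reeb directions to vanish (giving the $g'$-weighted Futaki invariant is zero) and the second-order/boundary behaviour against equivariant special test configurations to be nonnegative; translating via the explicit formula for $\BH^g$ in terms of $\DH$-measures, this nonnegativity is precisely $g'$-weighted Ding-semistability, which by the weighted analog of the Ding/Mabuchi comparison (already available in \cite{BLXZ23}) is equivalent to $g'$-weighted K-semistability of $(\CX_0,\D_{\CX,0},\xi_0)$. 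For the "if" direction: assuming $(\CX_0,\D_{\CX,0},\xi_0)$ is $g'$-weighted K-semistable, I would use the convexity of $\BH^g$ along geodesics (the generalization of the Berman–Boucksom–Jonsson/BLXZ convexity, which should hold under hypothesis \eqref{Eqnarray: function g}) together with the fact that $v_0$ is a critical point — the critical-point equation being exactly the $g'$-weighted Futaki vanishing, which follows from $g'$-weighted K-semistability — to conclude that $v_0$ is a global minimizer. Convexity upgrades "critical point" to "minimizer"; the klt central fiber hypothesis ensures all the relevant functionals are finite and the valuative/non-Archimedean machinery applies.

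The main obstacle I anticipate is establishing the precise dictionary between the first variation of $\BH^g$ and the $g'$-weighted Futaki invariant — in the unweighted case ($g(x)=e^x$, $g'=g$) this is the content of the computation in \cite{HL20}, but with a general $g$ satisfying \eqref{Eqnarray: function g} one must carefully track how differentiating the $g$-weight under the integral against the $\DH$-measure produces a $g'$-weight, and confirm that no extra terms spoil the identification. A secondary technical point is justifying that minimization of $\BH^g$ over all valuations can be reduced to minimization over the finite-dimensional Reeb cone of $\CX_0$ plus equivariant special test configurations; this uses finite generation of $\gr_{v_0}R$ exactly as in \cite{HL20, BLXZ23}, so I expect it to go through, but the bookkeeping with the twist by $\xi_0$ will require care. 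Finally, one must check that the convexity of $\BH^g$ is strict enough in the transverse directions to get the equivalence rather than just one implication; here the strict increase of $g$ and convexity of $\log\circ g$ should suffice, mirroring the role of strict convexity of $\exp$ in the classical case.
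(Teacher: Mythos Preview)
Your outline follows the correct template (that of \cite[Theorem 5.3]{HL20}), and the identification of the first variation of $\BH^g$ with the $g'$-weighted Ding invariant is indeed the computational core; the paper carries out exactly this computation and it goes through cleanly. But you misjudge where the real content lies. In the ``if'' direction, the step you label a ``secondary technical point'' --- reducing minimization over all filtrations on $X$ to a problem on the central fiber --- is in fact the heart of the argument, and it is not mere bookkeeping with the $\xi_0$-twist. The mechanism is the \emph{initial term degeneration}: for an arbitrary filtration $\CF$ on $R$, choose a basis of each $R_m$ compatible with both $v_0$ and $\CF$, and let $\CF'$ on $\gr_{v_0}R$ be generated by the images. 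This preserves the DH measure, $\DH_\CF=\DH_{\CF'}$, but the crucial inequality $\mu_{X,\D}(\CF)\ge\mu_{W,\D_W}(\CF')$ (with $(W,\D_W)=(\CX_0,\D_{\CX,0})$) comes from lower semicontinuity of lct across the degeneration. Together these give $\BH^g_{X,\D}(\CF)\ge\BH^g_{W,\D_W}(\CF')$, and only \emph{then} does one run the convexity computation on $W$: along the geodesic from $\CF_{\triv,\xi_0}$ to $\CF'$ the derivative at $t=0$ is a positive multiple of $\BD^{g',\xi_0}_{W,\D_W}(\CF'_{\xi_0})\ge 0$. Your phrasing ``convexity upgrades critical point to minimizer'' obscures this transfer step; without it there is no geodesic on $X$ along which your critical-point argument makes sense for a general $\CF$.

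In the ``only if'' direction your description in terms of ``second-order/boundary behaviour'' is off --- the argument is entirely first-order and by contradiction. If $(W,\D_W,\xi_0)$ were $g'$-weighted K-unstable, a variant of \cite{LX14} produces a special test configuration of $W$ with $\BD^{g',\xi_0}<0$; the construction of \cite{LX18,LX19} then lifts this to a one-parameter family of special valuations $v_\varepsilon$ over $X$ degenerating $(X,\D)$ to the central fiber of that test configuration, and the derivative $\frac{d}{d\varepsilon}\big|_{\varepsilon=0}\BH^g_{X,\D}(v_\varepsilon)$ is a positive multiple of that negative Ding invariant, contradicting minimality of $v_0$. No second variation enters, and testing only against Reeb-cone perturbations of $\xi_0$ is not enough --- one genuinely needs the destabilizing test configuration.
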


If Theorem \ref{Theorem: Generalized Tian Conjecture} is established, then it is natural to ask what is the relationship between the $g$-optimal degenerations of a log Fano pair $(X,\D)$ for different functions $g$. 

\begin{ques}
\label{Question: same g -optimal degenerations?}
Let $(X,\D)$ be a log Fano pair and $g,\bg$ be functions satisfying (\ref{Eqnarray: function g}). Let $(Y,\D_{Y}, \xi_0)$, $(\bY,\D_{\bY}, \bxi_0)$ be the $g$-, $\bg$-optimal degenerations of $(X,\D)$ respectively. When do we have 
\begin{eqnarray}
\label{Eqnarray: g- barg- optimal degeneration iso.}
(Y,\D_{Y}) \cong (\bY,\D_{\bY})? 
\end{eqnarray}
\end{ques}

If $(X,\D)$ is a toric log Fano pair, then the isomorphism (\ref{Eqnarray: g- barg- optimal degeneration iso.}) always holds since $(X,\D)$ is $g_0$-weighted K-polystable for any weight function $g_0:\BP\to \IR_{>0}$ (see Corollary \ref{Corollary: stable under g -optimal degeneration 2} for details). We have the following non-trivial examples given by \cite[Example 5.5 and 5.7]{Wang24}. 

\begin{thm}
For any Fano threefold in families №2.28, №3.14 and №2.23(a) of Mori-Mukai's list, the isomorphism (\ref{Eqnarray: g- barg- optimal degeneration iso.}) always holds. 
\end{thm}

\begin{rmk}\rm
Delcroix \cite{Del24} introduced the notion of {\it weight insensitive} Fano varieties, which are $g_0$-weighted K-semistable for any weight function $g_0$. 
The examples above are weight insensitive. He founded new examples of weight insensitive Fano threefolds and presented a weight sensitive example, for which Question \ref{Question: same g -optimal degenerations?} would have a negative answer. 
\end{rmk}


The paper is organized as follows. In Section \ref{Section: Preliminaries} we recall some basic notions in K-stability theory that we will use. We define the generalized $\BH$-invariant $\BH^g$ for log Fano pairs $(X,\D)$ in Section \ref{Section: Generalized H-invariants} and study the basic properties of it. In Section \ref{Section: Existence of H^g-minimizers and finite generation}, we show the existence of the $\BH^g$-minimizer and its finite generation property in the log Fano case. 
Finally, we give some examples of $g$-optimal degenerations in Section \ref{Section: Examples}.


\noindent \textbf{Acknowledgments}. The author would like to express his gratitude to Gang Tian for his constant support and guidance. He is grateful to Jiyuan Han for helpful comments, to Thibaut Delcroix for the suggestion to consider $G$-equivariant $g$-optimal degenerations, and to Minghao Miao, Lu Qi, Kewei Zhang, and Shengxuan Zhou for helpful discussions. He also thanks the anonymous referees for their valuable comments and suggestions. The author was partially supported by the NKRD Program of China (\#2023YFA1010600), (\#2020YFA0713200) and LNMS.

\section{Preliminaries}
\label{Section: Preliminaries}
We work over an algebraically closed field $\Ik$ of characteristic $0$. A {\it pair} $(X, \D)$ consists of a normal variety $X$ and an effective $\IQ$-divisor $\D$ on $X$ such that $K_X+\D$ is $\IQ$-Cartier. 
A polarized klt pair $(X,\D;L)$ consists of a projective klt pair $(X,\D)$ and an ample $\IQ$-Cartier $\IQ$-divisor $L$ on $X$. It is called {\it log Fano} if $(X,\D)$ is klt and $L=-(K_X+\D)$. Fix an integer $l_0 > 0$ such that $l_0 L$ is Cartier. We denote by $R:=R(X;L):=\oplus_{m\in l_0\IN} R_m$ the section ring of $L$ where $R_m := H^0(X, mL)$. A projective pair $(X,\D)$ is called of {\it Fano type} if there exists a $\IQ$-divisor $\D'\ge \D$ such that $(X,\D')$ is a log Fano pair.  

A $\IQ$-divisor $0\le \Gamma\sim_{\IQ} -(K_X+\D)$ is called a {\it $\IQ$-complement} of a log Fano pair $(X,\D)$ if $(X,\D+\Gamma)$ is log canonical. 
A {\it model} $\pi: (Y,E)\to (X,\D)$ consists of a proper birational morphism $\pi : Y\to X$ and a reduced divisor $E$ on $Y$. It is called a {\it log smooth model} if $(Y, \Supp(E+\Ex(\pi)+\pi^{-1}_*\D))$ is simple normal crossing. 
It is called a {\it toroidal model} if it is locally quotient of simple normal crossing pairs by abelian groups.  
If $E=E_1+\cdots +E_r$, then the closed subsets $\cap_{i\in J} E_i \seq Y$ ($J\seq \{1,\cdots,r\}$) are called {\it strata} of $(Y,E)$.

\subsection{Valuations}

Let $K$ be a field. An $\IR$-{\it valuation} $v$ on $K$ is a function $v: K^\times \to \IR$ such that $v(fg)=v(f) + v(g), v(f+g)\ge \min\{v(f), v(g)\}$ for all $f, g \in K^\times$. For convenience, we set $v(0)=+\infty$. The {\it trivial valuation} $v_{\triv}$ is defined as $v_{\triv}(f)=0$ for all $f\in K^\times$. Let $X$ be a normal variety. A {\it valuation} $v$ on $X$ is an $\IR$-valuation on the rational function field $K(X)$ with a center on $X$ and $v|_{\Ik^\times}=0$. Recall that the {\it center} of $v$, denoted by $c_X(v)$, is a scheme-theoretic point $\zeta$ on $X$ such that $v\ge 0$ on $\CO_{X,\zeta}$ and $v>0$ on the maximal ideal $\fm_{\zeta}\seq \CO_{X,\zeta}$. We denote by $C_X(v)=\overline{c_X(v)}\subseteq X$ the corresponding closed irreducible subscheme on $X$. If $X$ is proper, then every valuation $v$ has a unique center on $X$. For any valuation $v$ on $X$, we can define the {\it log discrepancy} $A_{X,\D}(v)$ by \cite[Section 5]{JM12}. We denote by  $\Val_{X}^\circ$ the subset of non-trivial valuations $v\in\Val_{X}$ with $A_{X,\D}(v)<+\infty$ of $v$ for some $\IQ$-divisor $\D$ such that $K_X+\D$ is $\IQ$-Cartier (it does not depend on the choice of $\D$).

Let $(Y,E=E_1+\cdots+E_r)\to X$ be a log smooth model and $\alpha=(\alpha_1,\cdots, \alpha_r) \in \IR_{\ge 0}^r$, we define the valuation $v_\alpha$ by 
\begin{eqnarray}
\label{Eqnarray. quasi-monomial valuations}
v_\alpha(f) = \min\{ \sum_{1\le i\le r} \alpha_i \beta_i\mid f_\beta\ne 0 \},  
\end{eqnarray}
for any $f = \sum_\beta f_\beta z^\beta \in \hat{\CO}_{X,\eta}$, where $\eta$ is the generic point of some irreducible component of $\cap_{1\le i\le r} E_i$ and $z_1,\cdots, z_r \in \hat{\CO}_{X,\eta}$ are local functions such that $E_i = (z_i=0)$. The valuation $v_\alpha$ is called a {\it quasi-monomial valuation} over $X$. We denote by 
\begin{eqnarray}
\QM_\eta(Y,E) = \{v_\alpha\in \Val_{X}\mid \alpha \in \IR_{\ge 0}^r \} \seq \Val_{X}, 
\end{eqnarray}
which is isomorphic to the simplicial cone $\IR^r_{\ge 0}$, and we say that $\QM_\eta(Y,E)$ is a {\it quasi-monomial simplicial cone} over $X$. We denote by $\QM(Y,E) = \cup_\eta \QM_\eta(Y,E)$, where $\cup_\eta$ is taken over all the generic points of strata of $(Y,E)$.  

Let $(X,\D)$ be a log canonical pair. A valuation $v$ over $X$ is called a {\it log canonical place} of $(X,\D)$ if $A_{X,\D}(v) = 0$. We denote by $\LC(X,\D)\seq \Val_{X}$ the subset of all the log canonical places of $(X,\D)$. If $\pi: (Y,E)\to (X,\D)$ is a log smooth model, then 
$$\LC(X,\D)\seq \QM(Y, \Supp(E+\Ex(\pi)+\pi^{-1}_*\D)). $$

\subsection{Filtrations}

Let $(X,\D;L)$ be a polarized klt pair of dimension $n$. Fix $\l0\in\IZ_{>0}$ such that $\l0 L$ is Cartier. Following \cite[2.1]{BJ20}, a {\it graded linear series} $V_\bu=\{V_m\}_{m\in\l0\IN}$ of $L$ is a sequence of subspaces $V_m\seq R_m$ such that $V_0=\Ik$ and $V_m\cdot V_{m'}\seq V_{m+m'}$. 
We assume that $V_\bu$ {\it contains an ample series}, that is, $H^0(X,mA)\seq V_m$ for $m\gg0$, where $A$ is an ample $\IQ$-divisor such that $|L-A|_\IQ\ne \varnothing$. In this case, the volume $\vol(V_\bu)$ is well-defined by 
\begin{eqnarray*} 
\vol(V_\bu) = \mathop{\lim}_{m\to \infty} \frac{\dim V_m}{m^n/n!}>0.
\end{eqnarray*}
Note that the section ring $R_\bu=R(X;L)$ is a graded linear series containing an ample series. 

\begin{defi}\rm 
A {\it filtration} $\CF$ on $V_\bu$ is a collection of subspaces $\CF^\lam V_m \subseteq V_m$ for each $\lam \in \IR$ and $m\ge 0$ such that
\begin{itemize}
\item {\it Decreasing.} $\CF^\lam V_m \supseteq \CF^{\lam'}V_m $ for  $\lam \le \lam'$; 
\item {\it Left-continuous.} $\CF^\lam V_m=\CF^{\lam-\epsilon}V_m$ for $0<\epsilon \ll 1$; 
\item {\it Linearly bounded.} there exists $C>0$ such that $\CF^{-mC}V_m=V_m$ and $\CF^{mC}V_m=0$ for all $m>0$; 
\item {\it Multiplicative.} $\CF^\lam V_m \cdot \CF^{\lam'}V_{m'} \subseteq \CF^{\lam+\lam'}V_{m+m'}$. 
\end{itemize}
For any $s\in V_m$, we set 
$\ord_\CF(s)=\max\{\lam\mid s\in\CF^{\lam}V_m\}.$
A basis $\{s_1,\cdots, s_{N_m}\}$ of $V_m$ is called {\it compatible} with $\CF$ if $\CF^\lam V_m$ is generated by $\{s_i\mid \ord_\CF(s_i)\ge \lam \}$ for any $\lam \in \IR$. 
In this case, the set 
\begin{eqnarray*} 
\Gamma_m(\CF)=\Gamma_m(V_\bu;\CF) := \{\ord_\CF(s_j)\mid 1\le j\le N_m\}
\end{eqnarray*}
is equal to the set of numbers $\lam\in\IR$ where $\CF^\lam V_m$ jumps, hence
is independent of the choice of the compatible basis $\{s_j\}$. It is called the {\it successive minima} of $\CF$ along $V_m$. 
Since $\CF$ is multiplicative, we see that the subset of $\IR\times \l0\IN$
\begin{eqnarray*} 
\Gamma(\CF) = \Gamma(V_\bu;\CF) := 
\{(\lam,m)\in \IR\times \l0\IN \mid \lam\in\Gamma_m(\CF)\},
\end{eqnarray*}
is a monoid (semigroup with unit). In particular, $(0,0)\in \Gamma(\CF)$, that is, $\CF^0V_0 =\Ik$ and $\CF^{\vep}V_0=0$ for any $\vep>0$. Let $\tGamma(\CF)\seq \IR\times \l0 \IZ$ be the (free abelian) subgroup generated by $\Gamma(\CF)$. 
Then the rank of $\tGamma(\CF)$ is called the {\it rational rank} of $\CF$. 
Since $\CF$ is linearly bounded, the sequence of numbers 
$$\lam^{(m)}_\max = \max\{\lam\in \IR\mid \CF^{\lam}V_m \ne 0\} = \max\,\Gamma_m(V_\bu;\CF) $$ 
is also linearly bounded, that is, 
\begin{eqnarray*} 
\lam_\max(V_\bu; \CF) 
:= \mathop{\sup}_{m\in \IN} \frac{\lam^{(m)}_\max}{m} 
= \mathop{\lim}_{m\rightarrow \infty} \frac{\lam^{(m)}_\max}{m} < +\infty. 
\end{eqnarray*}
By \cite[Theorem 5.3]{BHJ17}, we have 
\begin{eqnarray*} 
\lam_\max(V_\bu; \CF) 
= 
\sup\{t\in\IR\mid \vol(\CF^{(t)}V_\bu) = 0\}. 
\end{eqnarray*}
As in \cite[Corollary 5.4]{BHJ17}, we define 
\begin{eqnarray*} 
\lam_\min(V_\bu; \CF) 
:= 
\inf\{t\in\IR\mid \vol(\CF^{(t)}V_\bu) < \vol(V_\bu)\}. 
\end{eqnarray*}
\end{defi}

For example, if $v$ is a valuation over $X$, then $\CF_v^\lam V_m := \{s\in V_m\mid v(s)\ge \lam\}$ defines a filtration on $V_\bu$. It is linearly bounded if $A_{X,\D}(v)<+\infty$ by \cite[Lemma 3.1]{BJ20}, which holds for quasi-monomial valuations over $X$. 

For any filtration $\CF$ on $V_\bu$ and $a\in\IR_{>0}, b\in \IR$, we define the $a$-{\it rescaling} and $b$-{\it shift} of $\CF$ by
\begin{eqnarray*} 
(a\CF)^\lam V_m := \CF^{\lam/a}V_m, \quad
  \CF(b)^\lam V_m := \CF^{\lam-bm} V_m, 
\end{eqnarray*}
and we also denote by $a\CF(b):=(a\CF)(b)$, that is $(a\CF(b))^\lam V_m = \CF^{\frac{\lam - bm}{a}} V_m$. 
We denote by 
\begin{eqnarray} 
\label{Eqnarray. Def. gr_F V}
\gr_\CF^\lam V_m 
:= \CF^{\lam}V_m / \CF^{> \lam} V_m, \quad 
\gr_\CF V_m 
:= \bigoplus_{\lam\in \IR} \gr_\CF^\lam V_m 
= \bigoplus_{\lam\in \Gamma_m(\CF)} \gr_\CF^\lam V_m, 
\end{eqnarray}
where $\CF^{>\lam}V_m := \cup_{\mu>\lam} \CF^{\mu}V_m$. The last equality holds since $\gr_\CF^\lam V_m = 0$ if $\lam \notin \Gamma_m(\CF)$.

\subsection{Okounkov bodies, concave transforms and DH measures}

\begin{defi}\rm
Let $\CF$ be a linearly bounded filtration on $V_\bu$. By \cite[Lemma 3.17]{Xu24}, for any $t < \lam_\max(V_\bu;\CF)$, the graded linear subseries $\CF^{(t)} V_\bu \seq V_\bu$ defined by $(\CF^{(t)} V)_{m}=\CF^{mt}V_{m}$ contains an ample series. We denote the Okounkov body (\cite{LM09,BJ20}) of $\CF^{(t)} V_\bu$ by $\BO^{(t)}$, and let $\BO=\BO(V_\bu)$. Then $\BO^{(t)}\seq \BO$ is a descending collection of convex bodies. The {\it concave transform} of $\CF$ is the function on $\IR^n$ defined by 
\begin{eqnarray*} 
G_\CF(y)=\sup\{t\in \IR\mid y\in\BO^{(t)}\}.
\end{eqnarray*}
Note that $G_\CF$ is concave and upper-semicontinuous. The linear boundedness of $\CF$ guarantees that $\BO^{(-C)}=\BO$ and $\BO^{(C)}=0$. In other word, $\BO$ is contained in the level set $\{-C\le G_\CF\le C\}\seq \IR^n$. 
\end{defi}

\begin{lem}
For any $a\in\IR_{>0}, b\in \IR$, we have $G_{a\CF(b)}=aG_\CF+b$. 
\end{lem}

We define
\begin{eqnarray*} 
\lam_\min(V_\bu; \CF) 
:= \inf\{t\in\IR\mid \vol(\CF^{(t)}V_\bu) < \vol(V_\bu)\}. 
\end{eqnarray*}

\begin{defi}\rm
Let $\CF$ be a linearly bounded filtration on $V_\bu$. Then for any $m\in\lN$, the bounded function $t\mapsto \dim \CF^{mt}V_m$ is non-increasing on $\IR$. By taking derivative in the Lebesgue-Stieltjes sense, we get the following discrete measure on $\IR$: 
\begin{eqnarray*} 
\DH_{\CF,m} 
:= -\frac{\dif}{\dif t} \frac{\dim \CF^{mt} V_m}{\dim V_m}
= \sum_{t\in\IR} \delta_{t} \cdot \frac{\dim \gr_\CF^{mt} V_m}{\dim V_m} 
= \sum_{mt\in \Gamma_m(\CF)} \delta_{t} \cdot \frac{\dim \gr_\CF^{mt} V_m}{\dim V_m}, 
\end{eqnarray*}
where $\delta_{t}$ is the Dirac measure at $t\in \IR$. By \cite{BC11,BHJ17}, $\DH_{\CF,m}\to \DH_\CF$ converges weakly as $m\to \infty$, where 
\begin{eqnarray*} 
\DH_\CF
= -\frac{\dif}{\dif t} \frac{\vol(\CF^{(t)}V_\bu)}{\vol(V_\bu)}
\end{eqnarray*}
is called the {\it Duistermaat-Heckman (DH) measure} of $\CF$. 

Let $\CG$ be another linearly bounded filtration on $V_\bu$. Then for any $m\in\lN$, the bounded function 
\begin{eqnarray*} 
f_m(x,y) = \dim \CF^{mx}V_m\cap\CG^{my} V_m
\end{eqnarray*}
is non-increasing on $\IR^2$ in the Hardy-Krause sense (hence has bounded variation), that is, 
\begin{eqnarray*} 
f_m(x_1,y_1) - f_m(x_1,y_2) - f_m(x_2,y_1) + f_m(x_2,y_2) \le 0, 
\end{eqnarray*}
for any $x_1\le x_2$ and $y_1\le y_2$ since 
\begin{eqnarray*} 
(\CF^{mx_1}V_m\cap\CG^{my_2} V_m) \cap 
(\CF^{mx_2}V_m\cap\CG^{my_1} V_m) =\CF^{mx_2}V_m\cap\CG^{my_2} V_m, \\ 
(\CF^{mx_1}V_m\cap\CG^{my_2} V_m) +
(\CF^{mx_2}V_m\cap\CG^{my_1} V_m) \seq 
\CF^{mx_1}V_m\cap\CG^{my_1} V_m.
\end{eqnarray*}
Taking derivative, we get the following discrete measure on $\IR^2$ (see also \cite[3.1.3]{BLXZ23}):
\begin{eqnarray*} 
\DH_{\CF,\CG,m} 
:= -\frac{\partial^2}{\partial x \partial y} \frac{\dim \CF^{mx}V_m\cap\CG^{my} V_m}{\dim V_m}
= \sum_{(x,y)\in\IR^2} \delta_{(x,y)} \cdot \frac{\dim \gr_\CF^{mx}\gr_\CG^{my} V_m}{\dim V_m}
\end{eqnarray*}
which also converges weakly to 
\begin{eqnarray*} 
\DH_{\CF,\CG} 
= -\frac{\partial^2}{\partial x \partial y} \frac{\vol(\CF^{(x)}\CG^{(y)} V_\bu)}{\vol(V_\bu)} 
\end{eqnarray*}
as $m\to \infty$ by \cite[Theorem 3.3]{BLXZ23}, where $\CF^{(x)}\CG^{(y)} V_\bu$ is the graded linear series defined by 
\begin{eqnarray*} 
(\CF^{(x)}\CG^{(y)} V_\bu)_m := \CF^{mx}V_m\cap\CG^{my} V_m.
\end{eqnarray*}
This measure is called the {\it DH measure compatible with both $\CF$ and $\CG$}. 
\end{defi}

The two measures defined above both have compact support since $\CF$ and $\CG$ are linearly bounded. 
Let $f$ be a continuous function on $\IR$, then
\begin{eqnarray*} 
\int_{\IR^2} f(x) \DH_{\CF,\CG}(\dif x \dif y) = \int_\IR f(x) \DH_{\CF}(\dif x). 
\end{eqnarray*}
By \cite[2.5]{BJ20}, we also have 
\begin{eqnarray*}  
\DH_\CF = G_{\CF,*} \LE,
\end{eqnarray*}
where $\LE$ is the Lebesgue measure on the Okounkov body $\BO=\BO(V_\bu)$. 

We define the {\it $L^1$-distance} of $\CF$ and $\CG$ by 
\begin{eqnarray*}  
d_1(\CF,\CG) := \int_{\IR^2} |x-y| \DH_{\CF,\CG}(\dif x \dif y), 
\end{eqnarray*}
and say that $\CF,\CG$ are {\it equivalent} if $d_1(\CF,\CG)=0$. Let $v,w$ be valuations over $X$, if $\CF_v$ and $\CF_w$ are equivalent, then $v=w$ by \cite[Proposition 2.27]{HL20}, see also \cite[Lemma 3.16]{BLXZ23}.

\subsection{Log canonical slopes and $\BL$-functionals}
We first recall the log canonical thresholds of graded ideal sequences, see for example \cite[Section 1.2.2]{Xu24}. 
Let $(X,\D)$ be a log canonical pair and $\fa\seq \CO_X$ be a non-zero ideal sheaf. For any valuation $v$ over $X$, recall that $v(\fa) = \min\{v(f)\mid f\in \fa\}$. The {\it log canonical threshold of $\fa$ with exponent $c>0$} is defined by 
$$\lct(X,\D;\fa^c):= \inf_v \frac{A_{X,\D}(v)}{c\cdot v(\fa)}. $$
Let $\fa_\bu = \{a_k \}_{k\in \IN}$ be a graded sequence of ideal sheaves $\fa_k\seq \CO_X$, that is, $\fa_0 = \CO_X, \fa_k\supseteq \fa_{k+1}, \fa_k\cdot \fa_l \seq \fa_{k+l}$. Then for any valuation $v$ over $X$, by \cite[Lemma 1.46]{Xu24}, we have 
$$v(\fa_\bu) := \mathop{\lim}_{k\to \infty} \frac{v(\fa_k)}{k} = \mathop{\inf}_{k\in \IN} \frac{v(\fa_k)}{k}. $$
The {\it log canonical threshold of $\fa_\bu$ with exponent $c>0$} is defined by 
$$\lct(X,\D;\fa_\bu^c):= \inf_v \frac{A_{X,\D}(v)}{c\cdot v(\fa_\bu)}. $$

\begin{defi}\rm 
\label{Definition: log canonical slope}
Let $(X,\D;L)$ be a polarized klt pair and $\CF$ be a linearly bounded filtration on $R=R(X;L)$. The {\it base ideal sequence} $I^{(t)}_\bu = \{I_{m,mt}\}_{m\in l_0\IN}$ of $\CF$ is defined by 
\begin{eqnarray*}
I_{m,mt}
\,\,\,=\,\,\,
I_{m,mt}(L;\CF)
\,\,\,:=\,\,\,{\rm im}
\Big(\CF^{mt}H^0(X,mL)\otimes \CO(-mL)\to \CO_X\Big), 
\end{eqnarray*}
for any $m\in l_0\IN$ and $t\in\IR$. The {\it log canonical slope} of $\CF$ is defined by
\begin{eqnarray*}
\mu(\CF)
\,\,\,=\,\,\, 
\mu_{X,\D;L}(\CF)
\,\,\,:=\,\,\,
\sup\Big\{
t\mid \lct(X,\D;I^{(t)}_\bu)\ge 1
\Big\}. 
\end{eqnarray*}
Note that $I^{(t)}_\bu=0$ (hence $\lct(X,\D;I^{(t)}_\bu)=0$) when $t>\lam_\max$. We have $\mu(\CF)\le \lam_\max$. 
\end{defi}

\begin{lem}
For any $a\in\IR_{>0}, b\in \IR$, we have $\mu(a\CF(b))=a\mu(\CF)+b$. 
\end{lem}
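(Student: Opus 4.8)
The plan is to reduce the claim $\mu(a\CF(b)) = a\mu(\CF) + b$ to an identification of base ideal sequences. First I would analyze how the base ideal sequence transforms under rescaling and shift. By definition, $(a\CF(b))^\lambda V_m = \CF^{(\lambda - bm)/a}V_m$, so the relevant linear subspace appearing in the definition of $I_{m,mt}(L; a\CF(b))$ is $(a\CF(b))^{mt}H^0(X,mL) = \CF^{m(t-b)/a}H^0(X,mL)$. Hence, at the level of ideals, one gets $I_{m,mt}(L; a\CF(b)) = I_{m, m(t-b)/a}(L;\CF)$ for every $m \in l_0\IN$. This is a purely formal manipulation, but care must be taken that the index $m(t-b)/a$ is handled consistently along the sequence: the base ideal sequence $I^{(t)}_\bu$ is indexed by $m \in l_0\IN$ with the ``slope'' $t$ fixed, and the $m$-th term uses the subspace $\CF^{mt}$, so replacing $t$ by $(t-b)/a$ uniformly in $m$ is exactly what the rescaling-and-shift does. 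Thus $I^{(t)}_\bu(L; a\CF(b)) = I^{((t-b)/a)}_\bu(L;\CF)$ as graded sequences of ideals.

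Next I would feed this into the definition of the log canonical slope. Since the log canonical threshold $\lct(X,\D; I^{(t)}_\bu(L; a\CF(b)))$ equals $\lct(X,\D; I^{((t-b)/a)}_\bu(L;\CF))$, the defining supremum becomes
\[
\mu(a\CF(b)) = \sup\Big\{ t : \lct\big(X,\D; I^{((t-b)/a)}_\bu(L;\CF)\big) \ge 1 \Big\}.
\]
Substituting $s = (t-b)/a$, i.e. $t = as + b$, and using that $a > 0$ so the map $s \mapsto as + b$ is an increasing bijection of $\IR$, the supremum over $t$ of the shifted-and-scaled set equals $a \big(\sup\{ s : \lct(X,\D; I^{(s)}_\bu(L;\CF)) \ge 1\}\big) + b = a\mu(\CF) + b$. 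This gives the claimed identity.

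The only genuine point requiring attention — and the step I expect to be the main obstacle — is verifying that the base ideal identification $I_{m,mt}(L; a\CF(b)) = I_{m,m(t-b)/a}(L;\CF)$ is valid \emph{for all} $m \in l_0\IN$ and all real $t$, including the degenerate regimes where one side is the zero ideal or the unit ideal. For $t$ large (above $\lambda_\max(a\CF(b)) = a\lambda_\max(\CF) + b$, using the earlier-established transformation of $\lambda_\max$ — or rather, of $\lambda^{(m)}_\max$) both sides vanish, and for $t$ very negative both are the unit ideal; in between the identity is immediate from $(a\CF(b))^{mt}H^0(X,mL) = \CF^{m(t-b)/a}H^0(X,mL)$, since the image ideal depends only on this subspace. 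One should also note that $\lct(X,\D;-)$ is monotone and that $\lct$ of the zero ideal is $0 < 1$ while $\lct$ of the unit ideal is $+\infty \ge 1$, so the supremum defining $\mu$ is finite and the substitution argument is legitimate. No convexity or continuity input beyond linear boundedness of $\CF$ is needed, so this is essentially a bookkeeping lemma; the writeup should be short.
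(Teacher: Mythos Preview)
Your argument is correct. The paper states this lemma without proof (it is left as a routine observation immediately following the definition of the log canonical slope), and what you wrote is exactly the expected verification: the identity $(a\CF(b))^{mt}R_m = \CF^{m(t-b)/a}R_m$ gives $I^{(t)}_\bu(a\CF(b)) = I^{((t-b)/a)}_\bu(\CF)$, and then the affine change of variable $s = (t-b)/a$ with $a>0$ carries the supremum defining $\mu$ to $a\mu(\CF)+b$.
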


Consider the following function of $t\in \IR$ in the definition of $\mu(\CF)$, 
\begin{eqnarray*}
f(t)
=\lct(X,\D; I^{(t)}_\bu)
=\inf_v\frac{A_{X,\D}(v)}{v(I^{(t)}_\bu)}, 
\end{eqnarray*}
where the infimum runs over all the valuations over $X$. We have the following useful lemma in computing log canonical slope. 

\begin{lem}\cite[Proposition 3.46]{Xu24}
\label{Lemma: log canonical thresholds(I^(t)) is continuous and strictly decreasing}
The function $f(t)$ is continuous non-increasing on $(-\infty, \lam_\max)$. If we set $\mu_{+\infty}= \sup\{t\mid \lct(X,\D; I^{(t)}_\bu)=+\infty\}$, then $f(t)$ is strictly decreasing on $[\mu_{+\infty}, \lam_\max)$. 
\end{lem}

As a consequence, we have 
\begin{eqnarray}
\label{Eqnarray: mu <= A}
\mu_{X,\D;L}(\CF_v) \le A_{X,\D}(v), 
\end{eqnarray}
for any valuation $v$ over $X$. 
Indeed, we only need to prove the inequality when $A_{X,\D}(v) < \lam_\max$ since $\mu(\CF_v)\le \lam_\max$. By definition, we have $v(I^{(t)}_\bu) \ge t$. Hence for any $t\ge  A_{X,\D}(v)$, we have $\lct(X,\D;I^{(t)}_\bu) \le \frac{A_{X,\D}(v)}{v(I^{(t)}_\bu)} \le 1$. So $\mu(\CF_v)\le A_{X,\D}(v)$ by Lemma \ref{Lemma: log canonical thresholds(I^(t)) is continuous and strictly decreasing}. 

\begin{lem}
\label{Lemma: mu=A}
If there exists $\Gamma\in |L|_\IQ$ such that $(X,\D+\Gamma)$ is log canonical, and $v$ is a log canonical place of $(X,\D+\Gamma)$. Then $\mu_{X,\D;L}(\CF_v) = A_{X,\D}(v)$. 
\end{lem}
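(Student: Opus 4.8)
The plan is to prove the two inequalities separately. The inequality $\mu_{X,\D;L}(\CF_v) \le A_{X,\D}(v)$ is exactly (\ref{Eqnarray: mu <= A}), which holds for every valuation over $X$, so nothing new is needed there. The substance is the reverse inequality $\mu_{X,\D;L}(\CF_v) \ge A_{X,\D}(v)$, and for this I would exploit the divisor $\Gamma\in|L|_\IQ$ together with the hypothesis that $v$ is an lc place of $(X,\D+\Gamma)$. The idea is that $\Gamma$ furnishes, for every $m$ divisible enough, a section $s_\Gamma\in H^0(X,mL)$ (a power of the defining section of $\Gamma$) with $v(s_\Gamma)=m\cdot v(\Gamma)$; since $v$ is an lc place we have $A_{X,\D}(v)=v(\Gamma)$ after normalizing $\Gamma$ appropriately, or more precisely $A_{X,\D+\Gamma}(v)=0$ translates into $A_{X,\D}(v)=\ord_v(\Gamma)$. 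Hence for $t=A_{X,\D}(v)$ the section $s_\Gamma$ lies in $\CF_v^{mt}H^0(X,mL)$, so the base ideal $I_{m,mt}(L;\CF_v)$ contains $\CO(-\Gamma_m)$ where $\Gamma_m$ is the divisor cut out by $s_\Gamma$; in the limit $I^{(t)}_\bu$ "contains" the boundary $\Gamma$ in the appropriate asymptotic sense.

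The key step is then to show $\lct(X,\D;I^{(t)}_\bu)\ge 1$ at $t=A_{X,\D}(v)$, which by Lemma \ref{Lemma: lct(I^(t)) is continuous and strictly decreasing} and the definition of $\mu$ gives $\mu(\CF_v)\ge A_{X,\D}(v)$. Using the valuative description $\lct(X,\D;I^{(t)}_\bu)=\inf_w \frac{A_{X,\D}(w)}{w(I^{(t)}_\bu)}$, it suffices to check $A_{X,\D}(w)\ge w(I^{(t)}_\bu)$ for every valuation $w$ over $X$. Since $s_\Gamma\in\CF_v^{mt}H^0(X,mL)$ forces $I_{m,mt}\supseteq (s_\Gamma)\cdot\CO(mL)^{\vee}$, one gets $w(I_{m,mt})\le m\,w(\Gamma)$, hence $w(I^{(t)}_\bu)\le w(\Gamma)$; so it is enough to know $A_{X,\D}(w)\ge w(\Gamma)$ for all $w$, i.e. $A_{X,\D+\Gamma}(w)\ge 0$ for all $w$, which is precisely the statement that $(X,\D+\Gamma)$ is log canonical — our hypothesis. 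This closes the argument.

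The main obstacle I anticipate is the bookkeeping around divisibility and the precise normalization of $\Gamma$: one must choose $m$ so that $mL$ is Cartier and $m\Gamma$ is an integral divisor, interpret "$v$ is an lc place of $(X,\D+\Gamma)$" correctly as $A_{X,\D}(v)=\ord_v(\Gamma)$ (with $\Gamma$ the given $\IQ$-divisor in $|L|_\IQ$, so that $\ord_v(\Gamma)$ is well-defined), and make sure the asymptotic identity $v(I^{(t)}_\bu)=\lim_m v(I_{m,mt})/m$ from \cite{JM12} is applied to the right sequence. A minor subtlety is verifying $t=A_{X,\D}(v)<\lam_\max$ so that Lemma \ref{Lemma: lct(I^(t)) is continuous and strictly decreasing} applies on the relevant interval; but since $s_\Gamma$ is a nonzero section with $\ord_{\CF_v}(s_\Gamma)=m\,A_{X,\D}(v)$, we automatically have $A_{X,\D}(v)\le\lam_\max$, and the boundary case $A_{X,\D}(v)=\lam_\max$ can be handled directly from $\mu(\CF_v)\le\lam_\max$ combined with the lower bound just obtained.
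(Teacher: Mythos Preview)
Your proposal is correct and takes essentially the same approach as the paper. The only difference is cosmetic: where you unpack $\lct(X,\D;I^{(t)}_\bu)\ge 1$ via the valuative formula and the inequality $w(I^{(t)}_\bu)\le w(\Gamma)$ for all $w$, the paper compresses this into the single line $\lct(X,\D;I^{(A_{X,\D}(v))}_\bu)\ge\lct(X,\D;\Gamma)\ge 1$, using that $\Gamma\in\frac{1}{m}|\CF_v^{mA_{X,\D}(v)}R_m|$ (and note that Lemma~\ref{Lemma: lct(I^(t)) is continuous and strictly decreasing} is not actually needed --- the definition of $\mu$ as a supremum suffices).
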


\begin{proof}
Assume that $\Gamma\in \frac{1}{m}|mL|$. Since $v(\Gamma)=A_{X,\D}(v)$, we have $\Gamma\in \frac{1}{m}|\CF_v^{mA_{X,\D}(v)}R_m|$ and
\begin{eqnarray*}
\lct(X,\D;I^{(A_{X,\D}(v))}_\bu) 
\ge \lct(X,\D;\Gamma) 
\ge 1. 
\end{eqnarray*}
Hence $\mu(\CF_v)\ge A_{X,\D}(v)$. We conclude by (\ref{Eqnarray: mu <= A}). 
\end{proof}

\begin{defi}\rm 
\label{Definition: L-functional}
Let $\CF$ be a linearly bounded filtration on $R$, and $e_-, e_+ \in \IZ$ such that $\CF^{me_-}R_m = R_m$ and $\CF^{me_+}R_m =0$ for any $m\in l_0\IN$. Recall that $I_{m,\lam}$ is the base ideal sequence of $\CF$ (Definition \ref{Definition: log canonical slope}). We denote by 
\begin{eqnarray*}
\CI_m(e_+,e_-) &=& \CI_m(\CF; e_+, e_-) \\
&:=& 
I_{m,me_-} \cdot s^{-me_- +me_+} + 
I_{m,me_- + 1} \cdot s^{-(me_- + 1) +me_+} +  \cdots +
I_{m,me_+} \cdot s^{0} \seq \CO_X[s]. 
\end{eqnarray*}
Since $I_{m,me_-} = \CO_X, I_{m,me_+} = 0$ and $ \CO_X \cdot s^{-(me_- - 1)} \seq \CO_X \cdot s^{-me_-}$, we see that $\CI(e_+ + a, e_- - b) = \CI(e_+, e_-) s^{ma}$ for any $a,b \in \IN$. Hence $\CI_m(e_+) := \CI_m(e_+,e_-)$ is independent of the choice of $e_-$ and 
\begin{eqnarray*}
\CI_m := \CI_m(e_+)\cdot s^{-me_+} \seq \CO_X[s,s^{-1}] 
\end{eqnarray*}
is independent of the choice of $e_+$. 
The {\it $\BL$-functional} of $\CF$ is defined by 
\begin{eqnarray*}
\BL(\CF) = \BL_{X,\D;L}(\CF)  := \mathop{\lim}_{m\to \infty} \lct(X_{\IA^1}, \D_{\IA^1} + \CI_m^\frac{1}{m};X_0) - 1, 
\end{eqnarray*}
where the limit exists by \cite[Lemma 1.49]{Xu24}. 
\end{defi}

\begin{lem}\cite[Theorem 3.52]{Xu24}
For any linearly bounded filtration $\CF$ on $R$, we have 
\begin{eqnarray*}  
\mu(\CF)=\BL(\CF).
\end{eqnarray*}
\end{lem}


\subsection{Higher rank finite generation}
Let $(X,\D = \sum_i a_i \D_i)$ be a log Fano pair, where $\D_i$ are the irreducible components of $\D$. 
Assume that $v\in \Val_{X}^\circ$ is a quasi-monomial valuation such that the associated graded ring 
\begin{eqnarray}
\label{Eqnarray. Gr_v R}  
\gr_v R = \bigoplus_{m\in l_0\IN} \bigoplus_{\lam \in \IR_{\ge 0}} \gr_v^\lam R_m, \quad
\gr_v^\lam R_m = \CF_v^{\lam} R_m /\CF_v^{>\lam} R_m, 
\end{eqnarray}
is finitely generated. We denote by 
\begin{eqnarray*}  
X_v = \Proj \,\gr_v R,  \quad \D_v = \sum_i a_i \D_{i,v}, 
\end{eqnarray*}
where $\D_{i,v} \seq X_v$ is the corresponding degeneration divisor of $\D_i$ on $X_v$. More precisely, let $I_{\D_i}\seq R$ be the graded ideal of $\D_i$. We define the {\it initial term degeneration} $\Bin(I_{\D_i}) \seq \gr_v R$ of $I_{\D_i} \seq R$ by 
\begin{eqnarray*}  
\Bin(I_{\D_i}) = \bigoplus_{m\in l_0\IN, \lam\in \IR} I_{\D_i,m,\lam}, \quad 
I_{\D_i,m,\lam} = \{ \bar{s} \in \gr^\lam_v R_m \mid s\in I_{\D_i}, v(s)\ge \lam \}.
\end{eqnarray*}
Then $\D_{v,i}$ is the divisorial part of the sub-scheme $V(\Bin(I_{\D_i})) \seq X_v$ defined by $\Bin(I_{\D_i})$. In other words, $\D_{v,i}$ and $V(\Bin(I_{\D_i}))$ coincide away from a codimension $2$ subset of $X_v$. 

We say that the quasi-monomial valuation $v$ is a {\it special valuation} over $(X,\D)$ if $\gr_v R$ is finitely generated and $(X_v,\D_v)$ is klt. Recall that a projective birational model $\pi:(Y,E=E_1+\cdots+E_r)\to (X,\D)$ is of {\it qdlt Fano type} if there exists an effective $\IQ$-divisor $D$ on $Y$ such that $-(K_Y+D+E)$ is ample over $Z$, $(Y, D+E)$ is qdlt (i.e. quotient-dlt, see \cite[Definition 5.4]{Xu24}), $\lfloor D+E \rfloor = E$ and $D+E\ge \pi^{-1}_* \D$. 
We have the following deep theorem of higher rank finite generation developed by \cite{LXZ22,XZ22,Xu24}. 
\begin{thm}
\label{Theorem: special valuation, higher rank f.g.}

Let $(X,\D)$ be a log Fano pair, and $v$ be a quasi-monomial valuation over $X$. The following statements are all equivalent. 

\begin{enumerate}[{\rm \quad (a)}]
\item The associated graded ring $\gr_v R$ is finitely generated, and the central fiber $(X_v,\D_v)$ of the induced degeneration is klt. 

\item There exists a special $\IQ$-complement $\Gamma$ of $(X,\D)$ with respect to some toroidal model $\pi: (Y,E)\to (X,\D)$ such that $v\in \QM(Y,E)\cap \LC(X,\D+\Gamma)$. 

\item There exists a qdlt Fano type model $\pi: (Y,E)\to (X,\D)$ such that $v\in \QM(Y,E)$. 
\end{enumerate}
In this case, the valuation $v$ is called {\rm special} with respect to $(X,\D)$. 
\end{thm}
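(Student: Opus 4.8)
The plan is to prove the cycle of implications $(a)\Rightarrow(b)\Rightarrow(c)\Rightarrow(a)$; since all three equivalences are contained in \cite{LXZ22,XZ22,Xu24}, the task is really to recall and assemble their arguments, while keeping track of what turns a mere $\IQ$-complement into a \emph{special} one. For $(a)\Rightarrow(b)$ I would start from the finite generation of $\Rees_{\CF_v}R$, which yields a $\IG_m^r$-equivariant degeneration of $(X,\D)$ to the klt log Fano type pair $(X_v,\D_v)$ carrying an effective torus action. By boundedness of complements $(X_v,\D_v)$ admits a $\IQ$-complement, which after averaging over the torus is equivariant, and since $v$ lies in the relative interior of a rational simplicial cone $\sigma$ of monomial valuations on some toroidal model of $(X_v,\D_v)$, one can choose the complement so that all of $\sigma$ becomes log canonical for it. Degenerating this complement back along the (finitely generated) Rees algebra produces a special $\IQ$-complement $\Gamma$ of $(X,\D)$ with $v\in\LC(X,\D+\Gamma)$; a toroidal log resolution $(Y,E)$ of $(X,\D+\Gamma)$ on which $v$ is monomial then gives $v\in\QM(Y,E)\cap\LC(X,\D+\Gamma)$.

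For $(b)\Rightarrow(c)$ I would take the toroidal model $(Y,E)\to(X,\D)$ with its special complement $\Gamma$: the pair $(Y,E+\tilde\Gamma)$ is log canonical with $K_Y+E+\tilde\Gamma\sim_\IQ 0$ over $X$, so running a minimal model program over $X$ on a suitable boundary supported on $\Supp E$ — chosen to retain the components whose order valuations span the cone through $v$ — should contract $Y$ to a model $\pi:(Y',E')\to(X,\D)$ that is qdlt and of Fano type over $X$ with $v\in\QM(Y',E')$, i.e. a qdlt Fano type model. This step is essentially the birational geometry of Fano type (Mori dream) varieties.

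The heart is $(c)\Rightarrow(a)$. Writing $v=\sum_{i=1}^r a_i\ord_{E_i}$ with $a_i>0$ for the components of $E$ spanning the cone in $\QM(Y,E)$ that contains $v$, I would use that a qdlt Fano type model $Y$ is a Mori dream space over $X$, whence the $\IN\times\IZ^r$-graded multi-Rees algebra attached to the simultaneous filtration of $R$ by $\ord_{E_1},\dots,\ord_{E_r}$ is finitely generated — up to a twist it is a Cox ring of $Y$. Passing to the initial algebra in the one-parameter weight direction $(a_1,\dots,a_r)$ exhibits $\gr_v R$ as a further flat (Rees) degeneration of this finitely generated algebra, hence finitely generated; and the qdlt structure of $(Y,E)$, which controls the singularities of $Y$ along the strata cut out by the $E_i$, forces the central fiber $(X_v,\D_v)$ to be klt.

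The hard part will be exactly $(c)\Rightarrow(a)$: it is the higher-rank finite generation theorem of \cite{LXZ22} with the refinements of \cite{XZ22,Xu24}, and the two delicate points are (i) that the \emph{full} multigraded algebra — not merely the filtration by a single divisorial valuation, which would only give the rank-one case — is finitely generated, which is where the Mori dream space structure of the qdlt Fano type model is indispensable, and (ii) controlling the singularities of the central fiber along the degeneration so as to conclude klt rather than merely log canonical. Everything else reduces to boundedness of complements and standard manipulations of Fano type models.
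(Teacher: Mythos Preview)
The paper does not actually prove this theorem: it is stated as a known deep result and attributed entirely to \cite{LXZ22,XZ22,Xu24}, with no argument given in the paper itself. So there is no ``paper's own proof'' to compare against; your proposal is really a sketch of how the cited references establish the equivalences, and you are explicit about this.

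As a summary of those references your outline is broadly faithful: the cycle $(a)\Rightarrow(b)\Rightarrow(c)\Rightarrow(a)$ is the right organization, and you correctly identify $(c)\Rightarrow(a)$ as the hard step (higher-rank finite generation via the Mori dream structure of a qdlt Fano type model, plus control of singularities to get klt rather than just lc on the central fiber). A couple of places where your sketch is a bit loose compared to the actual arguments: in $(a)\Rightarrow(b)$, ``degenerating the complement back'' hides real work --- one must show the lifted divisor is still a \emph{special} $\IQ$-complement adapted to a toroidal model on $X$, not merely some $\IQ$-complement, and this uses the Koll\'ar component / qdlt structure on the central fiber in an essential way; and in $(c)\Rightarrow(a)$, the identification ``up to a twist it is a Cox ring of $Y$'' is suggestive but imprecise --- the finitely generated object is a multi-graded Rees algebra for the filtrations $\CF_{\ord_{E_i}}$, and the passage from this to finite generation of $\gr_v R$ for the irrational weight $v$ requires an additional approximation/semicontinuity argument (this is where \cite{XZ22} sharpens \cite{LXZ22}). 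These are not errors in your plan, just points where the real proofs are substantially more delicate than the one-line summaries suggest.
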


\begin{rmk} \rm 
\label{Remark. log Fano triple induced by special valuation}
Moreover, the valuation $v$ induces a co-weight vector $\xi_v$ on $(X_v,\D_v)$. Assume that $v$ is of rational rank $r$. Then there exists a log smooth model $(Y,E=E_1+\cdots+E_n)\to X$ with a closed point $p\in E_1\cap\cdots\cap E_n$ such that $v = v_\xi \in \QM_p(Y,E) \cong \IR^{n}_{\ge 0}$ for some $\xi = (\xi_1,\cdots,\xi_r,0,\cdots, 0)$, where $\xi_1,\cdots,\xi_r \in \IR_{>0}$ are $\IQ$-linearly independent. Then the sum $\oplus_\lam$ in (\ref{Eqnarray. Gr_v R}) is taken over the monoid: 
\begin{eqnarray*}  
\Gamma = \xi_1\IZ_{\ge 0} + \cdots + \xi_r\IZ_{\ge 0} \seq \IR_{\ge 0}. 
\end{eqnarray*}
We denote by 
\begin{eqnarray*}  
R_v = \bigoplus_{m\in l_0\IN} \bigoplus_{\alpha\in\IZ_{\ge 0}^r}  R_{v,m,\alpha}, \quad 
R_{v,m,\alpha} = \gr_v^{\alpha_1\xi_1+\cdots+\alpha_r\xi_r} R_m.  
\end{eqnarray*}
Then the filtration $\CF_v$ on $R$ decents to the $\IG_m^r$-invariant filtration $\CF_0$ on $R_v$: 
\begin{eqnarray*}  
\CF_0^\lam R_{v,m} = \bigoplus_{\alpha\in\IZ_{\ge 0}^r, \alpha_1\xi_1+\cdots+\alpha_r\xi_r \ge \lam}  R_{v,m,\alpha}, 
\end{eqnarray*}
which is just the filtration of the co-weight valuation $\wt_{\xi_v}$ determined by 
$$\xi_v = (\xi_1,\cdots,\xi_r) \in N_\IR(\IG_m^r) \cong \IR^r. $$
We say that $(X_v,\D_v,\xi_v)$ is the {\it (multistep) special degeneration} induced by the special valuation $v$ on $(X,\D)$. 
\end{rmk}

Motivated by \cite[Lemma 2.7]{LX18}, \cite[Lemma 3.3]{XZ22} and \cite[Lemma 4.2]{Che24}, we have the following characterization of weakly special valuations. See \cite[Theorem 3.11]{Wan25} for the local version. 

\begin{thm}
\label{Theorem: weakly special valuations}
Let $(X,\D)$ be a log Fano pair, and $v$ be a quasi-monomial valuation over $X$. The following statements are all equivalent. 
\begin{enumerate}[{\rm \quad (a)}]
\item $\mu(\CF_v) = A_{X,\D}(v)$. 
\item There exists a $\IQ$-complement $\Gamma$ of $(X,\D)$ such that $v\in\LC(X,\D+\Gamma)$. 
\end{enumerate}
In this case, the valuation $v$ is called {\rm weakly special} with respect to $(X,\D)$. 
\end{thm}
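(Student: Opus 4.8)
The plan is to prove the four implications $\mathrm{(b)}\Rightarrow\mathrm{(a)}$, $\mathrm{(c)}\Rightarrow\mathrm{(b)}$, $\mathrm{(b)}\Rightarrow\mathrm{(c)}$ and $\mathrm{(a)}\Rightarrow\mathrm{(b)}$; the first three are short applications of results already in hand together with standard birational geometry, while $\mathrm{(a)}\Rightarrow\mathrm{(b)}$ is where the real content lies. For $\mathrm{(b)}\Rightarrow\mathrm{(a)}$: a $\IQ$-complement $\Gamma$ of $(X,\D)$ satisfies $\Gamma\sim_\IQ-(K_X+\D)=L$, so $\Gamma\in|L|_\IQ$ and $(X,\D+\Gamma)$ is lc; if $v\in\LC(X,\D+\Gamma)$, then $v$ is an lc place of $(X,\D+\Gamma)$ and Lemma~\ref{Lemma: mu=A} gives $\mu(\CF_v)=A_{X,\D}(v)$ at once.

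\textbf{The birational equivalence $\mathrm{(b)}\Leftrightarrow\mathrm{(c)}$.} For $\mathrm{(c)}\Rightarrow\mathrm{(b)}$, let $\oGamma$ be a general effective member of the base-point-free $\IQ$-linear system $|-(K_\oY+\opi_*^{-1}\D+\oE)|_\IQ$ (nonempty since the class is semiample on the Fano type variety $\oY$). Then $(\oY,\opi_*^{-1}\D+\oE+\oGamma)$ is lc with $K_\oY+\opi_*^{-1}\D+\oE+\oGamma\sim_\IQ0$; since $(X,\D)$ is klt the divisors $\oE_i$ are $\opi$-exceptional, so pushing this forward along $\opi\colon\oY\to X$ produces a $\IQ$-complement $\Gamma=\opi_*\oGamma$ of $(X,\D)$ whose lc places include every component of $\oE$. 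As $(Y,E)\dashrightarrow(\oY,\oE)$ is an isomorphism at each stratum of $E$ and $\QM(Y,E)$ is the minimal simplex through $v$, every element of $\QM(Y,E)$ — in particular $v$ — is then an lc place of $(X,\D+\Gamma)$. For $\mathrm{(b)}\Rightarrow\mathrm{(c)}$, take a qdlt modification $\pi\colon(Y,E)\to(X,\D+\Gamma)$ extracting exactly the lc places spanning the minimal simplex containing $v$, so that $-(K_Y+\pi_*^{-1}\D+E)\sim_\IQ\pi_*^{-1}\Gamma\ge0$; this divisor is big (it is $\pi^*$ of the ample class $-(K_X+\D)$ minus an effective exceptional divisor), so running the MMP toward its ample model — which terminates because $Y$ is of Fano type, and is an isomorphism near the strata of $E$ by the qdlt structure — reaches $(\oY,\oE)$ on which $-(K_\oY+\opi_*^{-1}\D+\oE)$ is semiample. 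These are the standard manipulations of \cite{LX18} and \cite{Che24}.

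\textbf{The main implication $\mathrm{(a)}\Rightarrow\mathrm{(b)}$.} Put $t:=A_{X,\D}(v)=\mu(\CF_v)$. By Lemma~\ref{Lemma: lct(I^(t)) is continuous and strictly decreasing} (continuity of $f$), $\lct(X,\D;I^{(t)}_\bu)\ge1$, and since $v(I^{(t)}_\bu)\ge t=A_{X,\D}(v)$ the chain $1\le\lct(X,\D;I^{(t)}_\bu)\le A_{X,\D}(v)/v(I^{(t)}_\bu)\le1$ forces $\lct(X,\D;I^{(t)}_\bu)=1$ and $v(I^{(t)}_\bu)=A_{X,\D}(v)$; thus $v$ is an lc place of the graded ideal sequence $\fa_\bu:=I^{(t)}_\bu$ with log canonical threshold $1$. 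To upgrade this to an lc place of a genuine $\IQ$-complement, fix a smooth model $(Y_0,E_0)\to(X,\D)$ with $v\in\QM(Y_0,E_0)$, shrink $E_0$ to the minimal snc divisor whose quasi-monomial cone contains $v$, and pass to further blow-ups trivializing the ideals $\fa_m$ for large $m$; a general $D_m\in\frac1m|\fa_m|$ then has $v(D_m)\ge t$, and the log discrepancies of $(X,\D+D_m)$ along the components of $E_0$ tend to those dictated by $\lct(X,\D;\fa_\bu)=1$. Replacing $D_m$ by a member of $|-(K_X+\D)|_\IQ$ using ampleness of $-(K_X+\D)$ and extracting a limit produces a $\IQ$-complement $\Gamma$ of $(X,\D)$ with $v\in\LC(X,\D+\Gamma)$, following \cite[Lemma~2.7]{LX18} and \cite[Lemma~4.2]{Che24}.

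\textbf{Main obstacle.} The crux is this last step: for no individual $m$ need $(X,\D+D_m)$ be log canonical, so one cannot argue fibrewise in $m$, and the passage from ``$v$ is an lc place of the graded sequence $\fa_\bu=I^{(t)}_\bu$ with threshold $1$'' to ``$v$ is an lc place of an honest $\IQ$-complement of $(X,\D)$'' must be carried out by a limiting argument underpinned by ACC for log canonical thresholds and boundedness of complements for Fano type pairs. The remaining implications, and in particular the extraction/MMP steps in $\mathrm{(b)}\Leftrightarrow\mathrm{(c)}$, are comparatively routine.
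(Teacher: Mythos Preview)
Your treatment of the easy implications $(\mathrm{b})\Rightarrow(\mathrm{a})$ and $(\mathrm{c})\Rightarrow(\mathrm{b})$ matches the paper's, and your $(\mathrm{b})\Rightarrow(\mathrm{c})$ sketch (qdlt modification, then MMP) is in the right spirit. The genuine gap is exactly where you flag it: the step $(\mathrm{a})\Rightarrow(\mathrm{b})$.

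What you write there is not yet an argument. You say that the log discrepancies of $(X,\D+D_m)$ along the components of $E_0$ ``tend to those dictated by $\lct(X,\D;\fa_\bu)=1$'', and that one then ``extracts a limit'' $\Gamma$. But the components of your fixed snc model $E_0$ have no reason to be lc places of $(X,\D+\fa_\bu)$, the pairs $(X,\D+D_m)$ are not lc for any $m$, and a limit of non-lc pairs need not be lc, let alone have $v$ as an lc place. Invoking ``ACC for log canonical thresholds and boundedness of complements'' does not by itself bridge this: boundedness of complements tells you $N$-complements exist, not that one of them has $v$ as an lc place, and you give no mechanism connecting the graded sequence $\fa_\bu$ to any such complement.

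The paper avoids this limiting problem entirely by going $(\mathrm{a})\Rightarrow(\mathrm{c})$ directly, and the route is quite different from yours. The three ingredients you are missing are: (i) Diophantine approximation \cite[Lemma~2.7]{LX18}, used to replace the quasi-monomial $v$ by nearby \emph{divisorial} valuations $c_i^{-1}q_i\,\ord_{E_i}$ whose discrepancies with respect to $(X,\D+\fa_\bu)$ are $<\vep/2$; (ii) the perturbation trick: one works not with $D_m$ but with $(1-\vep')D_m$ for small $\vep'$, so that $\lct(X,\D;(1-\vep')D_m)>1$ for $m\gg0$ and one has an honest klt pair to run BCHM on; and (iii) the ACC input from \cite{HMX14}, used in the specific form that there is $\vep>0$ depending only on $\dim X$ and the coefficients of $\D$ such that $(Y,\pi_*^{-1}\D+(1-\vep)E)$ lc forces $(Y,\pi_*^{-1}\D+E)$ lc. Extracting the $E_i$ over $(X,\D+(1-\vep')D_m)$ gives a model on which the coefficients of $E$ lie in $(1-\vep,1]$, and (iii) then promotes this to a genuine qdlt model with $v\in\QM(Y,E)$; the MMP step to reach $(\oY,\oE)$ is the same as in your $(\mathrm{b})\Rightarrow(\mathrm{c})$. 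None of (i)--(iii) appears in your sketch, and without them the passage from ``lc place of $I_\bu^{(\mu)}$'' to ``lc place of a $\IQ$-complement'' remains an assertion.
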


\begin{proof}
By Lemme \ref{Lemma: mu=A}, we have (b) $\Rightarrow$ (a). Now we prove (a) $\Rightarrow$ (b). 
By \cite{HMX14}, there exists $\vep>0$ depending only on $\dim X$ and coefficients of $\D$ such that, for any birational morphism $\pi:Y\dashrightarrow X$ and any reduced divisor $E$ on $Y$, the pair $(Y,\pi_*^{-1}\D+(1-\vep)E)$ is log canonical if and only if $(Y,\pi_*^{-1}\D+E)$ is.

Let $\mu=\mu(\CF_v) = A_{X,\D}(v)$. This is equivalent to $v$ computing $\lct(X,\D; I^{(\mu)}_\bu)=1$. Since $v$ is a quasi-monomial valuation over $X$, there exists a quasi-monomial simplicial cone $\sigma\seq \Val_X$ containing $v$. The functions $w\mapsto A_{X,\D}(w)$ and $w\mapsto w(I^{(\mu)}_\bu)$ are linear and concave on $\sigma$ respectively. Hence the function $A_{X,\D+I^{(\mu)}_\bu}(-): \sigma \to \IR$, 
\begin{eqnarray}
\label{Eqnarray: function A(-) on sigma}
w\mapsto A_{X,\D+I^{(\mu)}_\bu}(w) = A_{X,\D}(w) - w(I^{(\mu)}_\bu)
\end{eqnarray}
is convex on $\sigma$. In particular, it is Lipschitz on $\sigma$. Hence there exists a constant $C>0$ such that 
\begin{eqnarray*}
|A_{X,\D+I^{(\mu)}_\bu}(w) -
   A_{X,\D+I^{(\mu)}_\bu}(v) |
\le C|w-v|. 
\end{eqnarray*}
On the other hand, $A_{X,\D+I^{(\mu)}_\bu}(w)\ge 0$ for any $w\in\sigma$ since $v$ compute $\lct(X,\D; I^{(\mu)}_\bu)=1$. Hence
\begin{eqnarray}
0 \le 
A_{X,\D+I^{(\mu)}_\bu}(w) = 
|A_{X,\D+I^{(\mu)}_\bu}(w) -
   A_{X,\D+I^{(\mu)}_\bu}(v) |
\le C|w-v|. 
\end{eqnarray}
By Diophantine approximation \cite[Lemma 2.7]{LX18}, there exist divisorial valuations $v_1,\cdots, v_r$ and positive integers $q_1,\cdots,q_r,c_1,\cdots,c_r$ such that 
\begin{itemize}
\item $\{v_1,\cdots,v_r\}$ spans a quasi-monomial simplicial cone in $\Val_X$ containing $v$; 
\item for any $1\le i\le r$, there exists a prime divisor $E_i$ over $X$ such that $q_iv_i=c_i\ord_{E_i}$; 
\item $|v_i-v|< \frac{\vep}{2Cq_i}$ for any $1\le i\le r$. 
\end{itemize}
In particular, 
\begin{eqnarray}
A_{X,\D+I^{(\mu)}_\bu}(E_i) 
= \frac{q_i}{c_i}\cdot A_{X,\D+I^{(\mu)}_\bu}(v_i) 
\le \frac{q_i}{c_i}\cdot C|v_i-v|
< \frac{q_i}{c_i}\cdot C \cdot \frac{\vep}{2 C q_i} 
\le \frac{\vep}{2}. 
\end{eqnarray}

Choose $0< \vep' < \vep/2\ord_{E_i}(I_\bu^{(\mu)})$. Then for $m\gg0$ and general $D_m \in \frac{1}{m}|\CF^{m\mu}R_m|$, we have 
\begin{eqnarray*}  
\lct(X,\D;(1-\vep')D_m) = \lct(X,\D;I_{m,m\mu}^{(1-\vep')/m}) >1, 
\end{eqnarray*}
by \cite[Lemma 1.41]{Xu24} and $\ord_{E_i}(D_m) = \frac{1}{m}\ord_{E_i}(I_{m,m\mu})$ for any $i$. Hence
\begin{eqnarray*}
a_i\,\,\,:=\,\,\,
A_{X,\D+(1-\vep')D_m}(E_i)
&=& (1-\vep') \Big(\ord_{E_i}(I_\bu^{(\mu)})-\frac{1}{m}\ord_{E_i}(I_{m,m\mu})\Big)\\
&& +\, \vep'\cdot\ord_{E_i}(I_\bu^{(\mu)}) 
   + A_{X,\D+I_\bu^{(\mu)}}(E_i) 
   \,\,\,\le\,\,\, \vep, 
\end{eqnarray*}
since $\ord_{E_i}(\fa_\bu)\le \frac{1}{m}\ord_{E_i}(\fa_m)$ for any graded ideal sequence $\fa_\bu$. 

By \cite[Corollary 1.4.3]{BCHM10}, there exists a $\IQ$-factorial birational model $\pi:Y\to X$ extracts precisely $E_1,\cdots, E_r$. Then 
\begin{eqnarray} 
\label{Eqnarray: crepant pullback 1}
K_Y+\pi_*^{-1}(\D+(1-\vep')D_m)+\sum_{i=1}^r (1-a_i)E_i         = \pi^*(K_X+\D+(1-\vep')D_m). 
\end{eqnarray}
In particular, $\pi^*(K_X+\D+(1-\vep')D_m)\ge K_Y+\pi_*^{-1}\D+(1-\vep)E$. Since $\lct(X,\D;(1-\vep')D_m) >1$, the pair $(Y,\pi_*^{-1}\D+(1-\vep)E)$ is log canonical. 
Hence $(Y,\pi_*^{-1}\D+E)$ is also log canonical by our choice of $\vep$. 

Note that $Y$ is of Fano type by (\ref{Eqnarray: crepant pullback 1}). We may run a $-(K_Y+\pi_*^{-1}\D+E)$-MMP and get a good minimal model $\phi: Y\dashrightarrow \oY$. Let $\opi: \oY\dashrightarrow X$ be the induced birational map and $\oE = \phi_*E$. 
Since $(X,\D+(1-\vep')D_m)$ admits a $\IQ$-complement, we see that $(Y,\pi_*^{-1}\D+(1-\vep)E)$ admits a $\IQ$-complement $\Phi$ by (\ref{Eqnarray: crepant pullback 1}). Let $\oPhi=\phi_*\Phi$. Since 
\begin{eqnarray*}
K_Y+\pi_*^{-1}\D+(1-\vep)E +\Phi
\sim_{\IQ} 0, 
\end{eqnarray*}
the MMP $\phi$ is crepant for the log canonical pair $(Y,\pi_*^{-1}\D+(1-\vep)E+\Phi)$, that is, 
\begin{eqnarray*}
K_Y+\pi_*^{-1}\D+(1-\vep)E +\Phi
= \phi^*(K_\oY+\opi_*^{-1}\D+(1-\vep)\oE+\oPhi). 
\end{eqnarray*}
Hence $(\oY,\opi_*^{-1}\D+(1-\vep)\oE+\oPhi)$ is also log canonical. In particular, $(\oY,\opi_*^{-1}\D+(1-\vep)\oE)$ is log canonical. By our choice of $\vep$, we see that $(\oY,\opi_*^{-1}\D+\oE)$ is also log canonical. 

Recall that $\phi: Y\dashrightarrow \oY$ is a $-(K_Y+\pi_*^{-1}\D+E)$-MMP. We have
\begin{eqnarray}
\label{Eqnarray. (-K)-MMP A_X(v)-decending}
A_{Y,\pi_*^{-1}\D+E}(F)
\ge
A_{\oY,\opi_*^{-1}\D+\oE}(F) \ge 0, 
\end{eqnarray}
for any prime divisor $F$ over $Y$. Since $(\oY,\opi_*^{-1}\D+\oE)$ is log canonical, we see that $\phi$ is an isomorphism at the generic point of each log canonical center $Z$ of $(Y,\pi_*^{-1}\D+E)$ (otherwise, there exists a prime divisor $F$ over $Y$ with $C_Y(F)=Z$ such that 
\begin{eqnarray*}
\label{}
0=A_{Y,\pi_*^{-1}\D+E}(F)
>
A_{\oY,\opi_*^{-1}\D+\oE}(F) \ge 0, 
\end{eqnarray*}
which is a contradiction). In particular, $\phi$ is an isomorphism at any stratum of $E$. Hence
\begin{eqnarray}
\label{Eqnarray. crepant pullback of (-K)-MMP}
\phi^*(K_\oY+\opi_*^{-1}\D+\oE) - (K_Y+\pi_*^{-1}\D+E) = \sum_i (1-A_{\oY,\opi_*^{-1}\D+\oE}(F_i)) F_i \ge 0, 
\end{eqnarray}
by (\ref{Eqnarray. (-K)-MMP A_X(v)-decending}), where $F_i$ are prime divisors extracted by $\phi$. 

Since $Y$ is of Fano type and $\phi: Y\dashrightarrow \oY$ is a birational contraction, we have that $\oY$ is also of Fano type (\cite[Lemma 2.12]{Bir19}). On the other hand, $-(K_{\oY}+\opi_*^{-1}\D+\oE)$ is nef. Hence $-(K_{\oY}+\opi_*^{-1}\D+\oE)$ is semiample and $(\oY,\opi_*^{-1}\D+\oE)$ admits a $\IQ$-complement by Bertini theorem. By (\ref{Eqnarray. crepant pullback of (-K)-MMP}), $(Y,\pi_*^{-1}\D+E)$ also admits a $\IQ$-complement $\Theta$. Then $\Gamma=\pi_*\Theta$ is a $\IQ$-complement of $(X,\D)$ such that $E_i\in \LC(X,\D+\Gamma)$. In particular, $v\in \LC(X,\D+\Gamma)$. 
\end{proof}

\section{Generalized $\BH$-invariants}
\label{Section: Generalized H-invariants}

Let $(X,\D)$ a log Fano pair. In this section, we will define the generalized $\BH$-invariant $\BH^g$ of $(X,\D)$ for any function $g$ satisfying (\ref{Eqnarray: function g}), and study the basic properties of it. 
We fix an Okounkov body $\BO$ of $L=-(K_X+\D)$ with respect to some admissible flag in the following. 

\begin{defi}[$\BH^g$-invariants]\rm 
\label{Definition: H^g invariant}
For any linearly bounded filtration $\CF$ on $R=R(X,\D)$, we define
\begin{eqnarray*}
\BH^g(\CF) \,\,\,=\,\,\, \BH^g_{X,\D}(\CF) 
&:=& 
\log\Big(
\int_\IR g(\mu(\CF) - t) \DH_\CF(\dif t)
\Big) \\
&=& 
\log\Big(
\int_\BO g(\mu(\CF) - G_\CF(y)) \dif y
\Big), \\
h^g(X,\D) 
&:=& 
\inf_\CF \,\,
\BH^g(\CF), 
\end{eqnarray*}
where the infimum runs over all the linearly bounded filtrations $\CF$ on $R$. 
\end{defi}

\begin{rmk}\rm
If we choose $g(x)=e^x$, then $\BH^g$ reveals the original $\BH$-invariant as \cite{TZZZ13,DS20,HL20}, see also \cite[Definition 2.7]{MW24}. It is well-known that $\mu(\CF)$ and $G_\CF$ are affine with respect to shifting, we have $\BH^g(\CF(b))=\BH^g(\CF)$ for any $b\in \IR$. 
\end{rmk}

\subsection{Convexity}
We study the global behavior of $\BH^g$ in the rest of this section. 
Following \cite[Theorem 3.7]{BLXZ23}, we prove the convexity of the $\BH^g$-invariants, which mainly relies on our choice of $g$. As a consequence, we prove the uniqueness of valuative minimizer of $\BH^g$. Let $\CF_0, \CF_1$ be linearly bounded filtrations on $R$. The {\it geodesic} connecting $\CF_0$ and $\CF_1$ is defined by
\begin{eqnarray}
\label{Equality: Geodesic}
\CF^\lam_t R_m 
= \sum_{(1-t)\mu + t\nu\ge \lam}
\CF_0^\mu R_m \cap \CF_1^\nu R_m.
\end{eqnarray}

\begin{thm}
\label{Theorem: gHT: Convexity}
The functional $\BH^g$ is convex along geodesics. More precisely, for any $0\le t\le 1$, we have
$\BH^g(\CF_t)\le (1-t)\BH^g(\CF_0) + t\BH^g(\CF_1). $
\end{thm}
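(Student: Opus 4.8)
The plan is to reduce the statement to two inputs: the behaviour of the log canonical slope along geodesics, and the behaviour of the DH measures along geodesics, and then combine them using the hypotheses on $g$. First I would recall from \cite[Theorem 3.7]{BLXZ23} (or re-derive it) that the log canonical slope is concave along the geodesic $\CF_t$, i.e. $\mu(\CF_t) \ge (1-t)\mu(\CF_0) + t\mu(\CF_1)$; this is where the geodesic equation \eqref{Equality: Geodesic} and the subadditivity of lct applied to the base ideal sequences enter. Second, I would invoke the two-filtration DH measure machinery: set $\DH_{\CF_0,\CF_1}$ to be the joint measure on $\IR^2$, and use the key fact (from \cite[3.1]{BLXZ23}, underlying the definition of the geodesic) that the DH measure of $\CF_t$ is the pushforward of $\DH_{\CF_0,\CF_1}$ under the affine map $(x,y)\mapsto (1-t)x + ty$. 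Concretely, for any continuous $\varphi$,
\begin{eqnarray*}
\int_\IR \varphi(s)\,\DH_{\CF_t}(\dif s) = \int_{\IR^2} \varphi\big((1-t)x + ty\big)\,\DH_{\CF_0,\CF_1}(\dif x\,\dif y).
\end{eqnarray*}

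With these two facts in hand, I would write $\BH^g(\CF_t) = \log\!\big(\int_\IR g(\mu(\CF_t)-s)\,\DH_{\CF_t}(\dif s)\big)$ and substitute, obtaining
\begin{eqnarray*}
\BH^g(\CF_t) = \log\Big(\int_{\IR^2} g\big(\mu(\CF_t) - (1-t)x - ty\big)\,\DH_{\CF_0,\CF_1}(\dif x\,\dif y)\Big).
\end{eqnarray*}
Since $g$ is increasing and $\mu(\CF_t) \ge (1-t)\mu(\CF_0) + t\mu(\CF_1)$, the integrand is bounded below by $g\big((1-t)(\mu(\CF_0)-x) + t(\mu(\CF_1)-y)\big)$, so it suffices to prove
\begin{eqnarray*}
\log\Big(\int_{\IR^2} g\big((1-t)(\mu(\CF_0)-x) + t(\mu(\CF_1)-y)\big)\,\DH_{\CF_0,\CF_1}(\dif x\,\dif y)\Big) \le (1-t)\BH^g(\CF_0) + t\BH^g(\CF_1).
\end{eqnarray*}
Now I would use the hypothesis that $\log\circ g$ is convex: writing $h = \log\circ g$, convexity gives $g\big((1-t)a + tb\big) \le g(a)^{1-t} g(b)^{1-t}$... more precisely $g\big((1-t)a+tb\big) = e^{h((1-t)a+tb)} \le e^{(1-t)h(a)+th(b)} = g(a)^{1-t}g(b)^{t}$. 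Applying this pointwise with $a = \mu(\CF_0)-x$, $b = \mu(\CF_1)-y$, the integral is dominated by $\int_{\IR^2} \big(g(\mu(\CF_0)-x)\big)^{1-t}\big(g(\mu(\CF_1)-y)\big)^{t}\,\DH_{\CF_0,\CF_1}(\dif x\,\dif y)$, and Hölder's inequality with exponents $\frac{1}{1-t}$ and $\frac{1}{t}$ bounds this by $\big(\int g(\mu(\CF_0)-x)\,\DH_{\CF_0,\CF_1}\big)^{1-t}\big(\int g(\mu(\CF_1)-y)\,\DH_{\CF_0,\CF_1}\big)^{t}$. Finally, the marginal property of $\DH_{\CF_0,\CF_1}$ (stated in the excerpt: $\int_{\IR^2} f(x)\,\DH_{\CF_0,\CF_1} = \int_\IR f(x)\,\DH_{\CF_0}$, and symmetrically in $y$) identifies these two factors as $e^{\BH^g(\CF_0)}$ and $e^{\BH^g(\CF_1)}$ respectively. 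Taking $\log$ finishes the proof.

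The main obstacle I anticipate is establishing the concavity $\mu(\CF_t)\ge (1-t)\mu(\CF_0)+t\mu(\CF_1)$ of the log canonical slope cleanly — this requires understanding how the base ideal sequences $I^{(s)}_\bu$ of $\CF_t$ relate to those of $\CF_0$ and $\CF_1$ along the geodesic and invoking subadditivity/restriction properties of log canonical thresholds; it is essentially \cite[Theorem 3.7]{BLXZ23} but must be cited or adapted carefully. A secondary technical point is the justification of the pushforward identity for $\DH_{\CF_t}$ under the affine averaging map, which should follow from the weak convergence of the discrete measures $\DH_{\CF_t,m}$ together with a direct computation at finite level using the geodesic formula \eqref{Equality: Geodesic}, plus \cite[Theorem 3.3]{BLXZ23}. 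Everything else — Hölder, convexity of $\log g$, monotonicity of $g$ — is routine once these two structural facts are in place.
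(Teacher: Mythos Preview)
Your overall architecture matches the paper's proof exactly --- joint DH measure, monotonicity of $g$, log-convexity of $g$, H\"older, marginals --- but you have the direction of the log canonical slope inequality backwards, and this breaks the argument as written. The correct statement (this is \cite[Proposition 3.12]{BLXZ23}, not Theorem 3.7) is that $\mu$ is \emph{convex} along the geodesic:
\[
\mu(\CF_t) \le (1-t)\mu(\CF_0) + t\mu(\CF_1).
\]
With $g$ increasing, this gives that the integrand $g\big(\mu(\CF_t) - (1-t)x - ty\big)$ is bounded \emph{above} by $g\big((1-t)(\mu(\CF_0)-x) + t(\mu(\CF_1)-y)\big)$, which is exactly what you need to continue. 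Your version (``$\mu(\CF_t)\ge\ldots$'', ``bounded below'') would only yield a lower bound on $\BH^g(\CF_t)$, so the sentence ``so it suffices to prove\ldots'' is a non sequitur: a lower bound on the integrand cannot reduce the problem to an upper bound on a smaller quantity.

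Once you flip that single inequality, everything else you wrote is correct and coincides with the paper's proof line by line. (As a minor citation note, your ``concavity via subadditivity of lct'' heuristic is also pointing the wrong way; the actual mechanism in \cite{BLXZ23} shows the base ideal sequence of $\CF_t$ contains a suitable product of those of $\CF_0,\CF_1$, which pushes lct --- and hence $\mu$ --- down, not up.)
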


\begin{proof}
By \cite[Proposition 3.12]{BLXZ23}, we know that  
\begin{eqnarray}  
\label{Eqnarray. mu convex along geodesic}
\mu(\CF_t) \le (1-t)\mu(\CF_0) + t\mu(\CF_1). 
\end{eqnarray}
Hence 
\begin{eqnarray*}
\BH^g(\CF_t) 
&=& \log\Big(
\int_\IR g(\mu(\CF_t) - s) \DH_{\CF_t}(\dif s)
\Big) \\
&=& \log\Big(
\int_{\IR^2} g(\mu(\CF_t) - (1-t)x-ty) \DH_{\CF_0,\CF_1}(\dif x \dif y)
\Big) \\
&\le& \log\Big(
\int_{\IR^2} g((1-t)(\mu(\CF_0) - x) + t(\mu(\CF_1) - y)) \DH_{\CF_0,\CF_1}(\dif x \dif y)
\Big) \\
&\le& \log\Big(
\int_{\IR^2} g(\mu(\CF_0) - x)^{1-t} \cdot g(\mu(\CF_1) - y)^t \cdot \DH_{\CF_0,\CF_1}(\dif x \dif y)
\Big) \\
&\le& (1-t)\log\Big(
\int_\IR g(\mu(\CF_0) - x) \DH_{\CF_0}(\dif x)
\Big)
+ t\log\Big(
\int_\IR g(\mu(\CF_1) - y) \DH_{\CF_1}(\dif y)
\Big) \\
&=& (1-t)\BH^g(\CF_0) + t\BH^g(\CF_1), 
\end{eqnarray*}
where the first inequality follows from the convexity of $\mu(\CF_t)$ (\ref{Eqnarray. mu convex along geodesic}) and $g$ being increasing, the second one follows from the log concavity of $g$, and the third one follows from H\"older's inequality. 
\end{proof}


\begin{cor}
\label{Corollary: gHT: uniqueness of minimizer}
Let $v, w$ be valuations over $X$. If $\BH^g(\CF_v)=\BH^g(\CF_w)=h^g(X,\D)$, then $v=w$. 
\end{cor}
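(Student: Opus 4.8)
The plan is to mimic the standard uniqueness argument for $\mathbf{H}$-minimizers (as in \cite[Theorem 3.7]{BLXZ23} and \cite[Proposition 2.27]{HL20}), extracting equality in the chain of inequalities in the proof of Theorem \ref{Theorem: gHT: Convexity} applied to the geodesic $\CF_t$ joining $\CF_v$ and $\CF_w$. First I would set $\CF_0 = \CF_v$, $\CF_1 = \CF_w$, and form the geodesic $\CF_t$. Since $\BH^g$ is convex along the geodesic and $\CF_0, \CF_1$ are both minimizers with value $h^g(X,\D;L)$, we get $\BH^g(\CF_t) = h^g(X,\D;L)$ for all $t\in[0,1]$, so every inequality in the displayed computation must be an equality, in particular for $t = 1/2$.

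Next I would analyze which equalities this forces. The third inequality is H\"older's inequality applied with exponents $\frac{1}{1-t}, \frac{1}{t}$ to the functions $g(\mu(\CF_0)-x)$ and $g(\mu(\CF_1)-y)$ against $\DH_{\CF_0,\CF_1}$; equality in H\"older forces $g(\mu(\CF_0)-x)$ and $g(\mu(\CF_1)-y)$ to be proportional $\DH_{\CF_0,\CF_1}$-a.e., i.e. $(\mu(\CF_0)-x) - (\mu(\CF_1)-y)$ is $\DH_{\CF_0,\CF_1}$-a.e. constant (using that $g$ is strictly increasing, hence injective). The second inequality is equality in the log-concavity (i.e. AM--GM type) bound $g((1-t)a + tb) \le g(a)^{1-t}g(b)^t$; since $\log\circ g$ is convex, equality along a genuine interval of arguments forces $\log\circ g$ to be affine there — but more simply, combined with the a.e.-constancy of $(\mu(\CF_0)-x)-(\mu(\CF_1)-y)$ from the third inequality, one deduces $x - y$ is $\DH_{\CF_0,\CF_1}$-a.e. equal to the constant $c:=\mu(\CF_0)-\mu(\CF_1)$. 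Then I would integrate: $d_1(\CF_v,\CF_w) = \int |x-y|\,\DH_{\CF_0,\CF_1} = |c|$, and I need to show $c = 0$. This follows because $\int (x-y)\,\DH_{\CF_0,\CF_1} = \int x\,\DH_{\CF_0} - \int y\,\DH_{\CF_1}$ need not vanish a priori — instead I would use the first inequality and the shift-invariance: replace $\CF_1$ by a shift $\CF_1(c)$ if necessary, noting $\BH^g(\CF_1(c)) = \BH^g(\CF_1)$ and $\mu(\CF_1(c)) = \mu(\CF_1) + c$; alternatively observe that the first inequality (from $\CF_t$ being built from intersections, $g$ increasing, and $\mu(\CF_t) \le (1-t)\mu(\CF_0)+t\mu(\CF_1)$) being an equality already forces $\mu(\CF_t) = (1-t)\mu(\CF_0)+t\mu(\CF_1)$, and then the support of $\DH_{\CF_0,\CF_1}$ lies on the line $x = y + c$ — then $v$ and $w(-c)$ (the shift) have $L^1$-distance zero, so by \cite[Proposition 2.27]{HL20} / \cite[Lemma 3.16]{BLXZ23} the valuations agree up to shift; since a valuation is not changed by the shift operation on its filtration being trivial only when $c=0$, and $\mu(\CF_v) - A$, $\mu(\CF_w)-A$...

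Cleaner: once $x = y$ holds $\DH_{\CF_0,\CF_1}$-a.e. (which I claim is the actual outcome, because equality in the second, log-concavity inequality with $\log g$ strictly convex unless the two arguments coincide forces $(1-t)(\mu(\CF_0)-x) = \mu(\CF_0)-x = \mu(\CF_1)-y$ pointwise a.e., i.e. $x-y \equiv \mu(\CF_0)-\mu(\CF_1)$, and then equality in the \emph{first} inequality plus $\mu(\CF_t)=(1-t)\mu(\CF_0)+t\mu(\CF_1)$ pins $x-y \equiv 0$ after re-examining; in the $g(x)=e^x$ case this is exactly the BLXZ argument), we conclude $d_1(\CF_v,\CF_w) = 0$, hence $v = w$ by the cited $L^1$-rigidity of filtrations of valuations. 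I would present the argument by first recording the shift-normalization $\mu(\CF_v)=\mu(\CF_w)$ (permissible since $\BH^g$ and the minimizer property are shift-invariant, and shifting $\CF_w$ by $\mu(\CF_v)-\mu(\CF_w)$ replaces $w$ by the same valuation's filtration shifted, which does not change $w$ as a valuation up to the identification $\CF_{w}(b)\ne\CF_{w'}$ for $w'$ a valuation unless $b=0$ — so in fact I should instead argue $c=0$ directly).

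The main obstacle I anticipate is the bookkeeping around the shift ambiguity: $\BH^g$ is shift-invariant so the minimization does not see shifts, yet the conclusion $v=w$ is about honest valuations, and $\CF_v(b)$ is generally not of the form $\CF_{w}$ for a valuation $w$ unless $b = 0$. The resolution — and the technical heart — is to show that equality throughout the convexity estimate forces $x = y$ exactly ($\DH_{\CF_0,\CF_1}$-a.e.), not merely up to a constant, and for this one combines equality in H\"older (proportionality), equality in log-concavity (which, since $\log\circ g$ is convex, forces the two arguments $\mu(\CF_0)-x$ and $\mu(\CF_1)-y$ to lie in a common affine piece of $\log g$ — and if one wants strict convexity one may need the hypothesis that $g$ is genuinely strictly increasing to rule out degenerate cases), and equality in the first monotonicity step which forces $\mu(\CF_t)$ to be affine in $t$ and the DH measure to be supported on the diagonal; then $d_1(\CF_v,\CF_w)=0$ and \cite[Proposition 2.27]{HL20} finishes. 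I would double-check that no additional hypothesis on $g$ beyond \eqref{Eqnarray: function g} is needed for the diagonal conclusion, exactly as in the $\BH$ case treated in \cite{BLXZ23}.
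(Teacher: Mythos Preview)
Your overall strategy matches the paper's: form the geodesic, force equality throughout the convexity chain, deduce that the joint DH measure is supported on a line, and then invoke the $L^1$-rigidity \cite[Proposition 2.27]{HL20}. But two of the steps you sketch contain genuine gaps.

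First, the passage ``proportional $\Rightarrow$ difference is constant'' is wrong as stated. Equality in H\"older gives $g(\mu(\CF_0)-x)=c\cdot g(\mu(\CF_1)-y)$ a.e.\ for some fixed $c>0$; strict monotonicity of $g$ only tells you that $\mu(\CF_0)-x$ is a \emph{function} of $\mu(\CF_1)-y$, namely $g^{-1}(c\,g(\cdot))$, which is not an additive shift unless $c=1$. The paper fixes this by integrating the proportionality relation against $\DH_{\CF_0,\CF_1}$ and using the marginal property to get $e^{\BH^g(\CF_0)}=c\cdot e^{\BH^g(\CF_1)}$; since $\BH^g(\CF_0)=\BH^g(\CF_1)$ this forces $c=1$, and \emph{then} injectivity of $g$ gives $\mu(\CF_0)-x=\mu(\CF_1)-y$ a.e., i.e.\ $x-y=d:=\mu(\CF_0)-\mu(\CF_1)$ a.e.

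Second, your alternative route via equality in the log-concavity step cannot work under the stated hypotheses: $\log\circ g$ is only assumed convex, not strictly convex. For $g(x)=e^x$ the inequality $g((1-t)a+tb)\le g(a)^{1-t}g(b)^t$ is an identity, so equality there carries no information whatsoever. Likewise, equality in the first inequality only says $\mu(\CF_t)$ is affine in $t$; it does not by itself constrain the support of $\DH_{\CF_0,\CF_1}$.

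Finally, for the step $d=0$ that you flag as the main obstacle: once $d_1(\CF_0,\CF_1(d))=0$, the paper observes that equivalent filtrations share the same $\lam_{\min}$, and valuation filtrations satisfy $\lam_{\min}(\CF_v)=\lam_{\min}(\CF_w)=0$ (this is \cite[Lemma 2.5]{BLXZ23}). Since $\lam_{\min}(\CF_1(d))=\lam_{\min}(\CF_1)+d=d$, one gets $d=0$ immediately. Your vague remark that ``$\CF_w(b)$ is not of the form $\CF_{w'}$ unless $b=0$'' is pointing at the right fact, but the clean invariant to use is $\lam_{\min}$.
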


\begin{proof}
The proof is slightly different from \cite[Proposition 3.14]{BLXZ23}, which relies on the linearity of $\log \circ g$. Let $\CF_0=\CF_v$ and $\CF_1=\CF_w$, and $\CF_t$ be the geodesic connecting them. Then  
\begin{eqnarray*}  
\BH^g(\CF_t)\le (1-t)\BH^g(\CF_0) + t\BH^g(\CF_1) = h^g(X,\D). 
\end{eqnarray*}
So the equality holds, hence do those in the proof of Theorem \ref{Theorem: gHT: Convexity}. Then since we used H\"older's inequality, we have $g(\mu(\CF_0)-x)=c\cdot g(\mu(\CF_1)-y)$ almost everywhere on $\IR^2$ with respect to the measure $\DH_{\CF_0,\CF_1}$ for some $c>0$. On the other hand, since $\BH^g(\CF_0)=\BH^g(\CF_1)$, we have $c=1$. Hence $\mu(\CF_0)-x=\mu(\CF_1)-y$ almost everywhere on $\IR^2$ with respect to the measure $\DH_{\CF_0,\CF_1}$ since $g$ is continuous and strictly increasing, that is, 
\begin{eqnarray*}  
0=\int_{\IR^2}|x-y-d|\DH_{\CF_0,\CF_1}(\dif x \dif y) = d_1(\CF_0, \CF_1(d)), 
\end{eqnarray*}
where $d = \mu(\CF_0)-\mu(\CF_1)$. Then $\CF_0$ and $\CF_1(d)$ are equivalent, so they have the same $\lam_\min$, and $d=0$ by \cite[Lemma 2.5]{BLXZ23}. We conclude that $v=w$ by \cite[Proposition 2.27]{HL20} or \cite[Lemma 3.16]{BLXZ23}. 
\end{proof}


\begin{thm}
\label{Theorem: beta^g on simplicial cone}

Let $\Gamma$ be a $\IQ$-complement of $(X,\D)$ and $\sigma\seq \LC(X,\D+\Gamma)$ be a quasi-monomial simplicial cone. Then the function 
$$\BH^g|_\sigma : \sigma \to \IR,\quad  v\mapsto \BH^g(v) $$
is continuous and admits a minimizer $v_0\in \sigma$. 
\end{thm}
\begin{proof}

For the continuity of $\BH^g|_\sigma$, we follow the argument of \cite[Lemma 4.15]{BLXZ23}. 
For any $v\in \sigma \seq \LC(X,\D+\Gamma)$, we have $\mu(\CF_v) = A_{X,\D}(v)$ by Lemma \ref{Lemma: mu=A}. Hence the function $v\mapsto \mu(\CF_v) = A_{X,\D}(v)$ is linear on $\sigma$ by \cite[Proposition 5.1]{JM12}.

For any $v\in \sigma$ and $x\in \IR_{\ge 0}$, we denote by 
$$f_v(x) = e^{\BH^g(xv)} = 
\int_\BO g(x(A_{X,\D}(v) - G_v(y))) \dif y. $$
Then it is smooth since $g$ is smooth, and 
\begin{eqnarray*}  
f_v'(x) &=& 
\int_\BO (A_{X,\D}(v) - G_v(y)) g'(x(A_{X,\D}(v) - G_v(y))) \dif y, \\
f_v''(x) &=& 
\int_\BO (A_{X,\D}(v) - G_v(y))^2 g''(x(A_{X,\D}(v) - G_v(y))) \dif y > 0, 
\end{eqnarray*}
where $f_v'' >0$ since $\log \circ g$ is convex, that is, $g'' \ge (g')^2/g > 0$. Then $f'$ admits at most one zero-point. Choose $x_0>0$ such that $f'(x_0)>0$. Hence $f_v$ is strictly convex and increasing for $x\ge x_0$. We see that $\lim_{x\to +\infty} f_v(x) = +\infty$.

Choose a log smooth model $(Y,E=E_1+\cdots + E_r) \to (X,\D)$ such that $\sigma = \QM_\eta(Y,E)$, where $\eta$ is the generic point of some component of $E_1\cap\cdots \cap E_r$. Let $z_1,\cdots , z_r$ be the regular functions in $\CO_{Y,\eta}$ satisfying $E_i = \{z_i=0\}$. For any $\alpha = (\alpha_1,\cdots, \alpha_r) \in \IR^r_{\ge 0}$, we denote by $v_\alpha \in \sigma$ the corresponding quasi-monomial valuation satisfying $v_\alpha(z_i) = \alpha_i$. 

For any $\alpha \in \IR^r_{\ge 0}$ and any sequence $\IR_{\ge 0}\ni \alpha^{(k)} \to \alpha$ (as $k\to\infty$), we denote by $v^{(k)} = v_{\alpha^{(k)}}$ and $v= v_\alpha$. We will show that $\lim_{k\to \infty} \BH^g(v^{(k)}) = \BH^g(v)$. 
Note that $\BH^g(xv)$ is continuous in $x\in \IR_{\ge 0}$. Therefore, after rescaling the $\alpha^{(k)}$ and removing finitely many terms, we may assume the sequence $\alpha^{(k)}$ is non-increasing. Hence $v^{(k)} \ge v^{(k+1)} \ge v$. Let $\BO\seq \IR_{\ge 0}^n$ be an Okounkov body of $(X,\D)$ and $G^{(k)}, G: \BO\to \IR_{\ge 0}$ be the concave transform of $v^{(k)}$ and $v$ respectively. Then we have $G^{(k)} \le G^{(k+1)} \le G$. By \cite[Proposition 2.4]{BLX19}, we have $S(v^{(k)}) \to S(v)$ as $k\to \infty$, that is, 
\begin{eqnarray*}  
0\le \int_\BO (G(y)-G^{(k)}(y)) \dif y \to 0, \quad k\to \infty. 
\end{eqnarray*}
Hence $G^{(k)} \to G$ almost everywhere as $k\to \infty$. So $g(A_{X,\D}(v^{(k)}) - G^{(k)}) \to g(A_{X,\D}(v) - G)$ almost everywhere as $k\to \infty$. We conclude that $\BH^g(v^{(k)}) \to \BH^g(v)$ as $k\to \infty$. 
\end{proof}

\subsection{Approximation by valuations}

\begin{defi}[$\tbeta^g$-invariants]\rm
For any valuation $v$ over $X$ with $A_{X,\D}(v)<+\infty$, we define 
\begin{eqnarray*}  
\tbeta^g(v) 
\,\,\,=\,\,\, 
\tbeta^g_{X,\D}(v) 
\,\,\,:=\,\,\, 
\log\Big(
\int_\IR g(A_{X,\D}(v) - t) \DH_{\CF_v}(\dif t)
\Big). 
\end{eqnarray*}
By convention, we set $\tbeta^g(v)=+\infty$ if $A_{X,\D}(v)=+\infty$. 
\end{defi}

\begin{rmk} \rm 
\label{Remark: H^g = beta^g}
Since $\mu_{X,\D}(\CF_v)\le A_{X,\D}(v)$, we have naturally $\BH^g(\CF_v) \le \tbeta^g(v)$. The equality holds if $v$ is a log canonical place of $(X,\D+\Gamma)$ by Lemma \ref{Lemma: mu=A}, where $\Gamma\in |L|_\IQ$ such that $(X,\D+\Gamma)$ is log canonical. 
\end{rmk}

We have shown that the $\BH^g$-invariants admit at most one valuative minimizer. For the existence, we prove the following theorem as preparation. 

\begin{thm}
\label{Theorem: gHT: Valuative criterion}
$h^g(X,\D) = \inf_{v\in \Val_X} \, \tbeta^g(v). $
\end{thm}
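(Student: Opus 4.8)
The plan is to prove the two inequalities $h^g(X,\D;L) \le \inf_{v\in\Val_X}\tbeta^g(v)$ and $h^g(X,\D;L) \ge \inf_{v\in\Val_X}\tbeta^g(v)$ separately.

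For the first inequality, recall that $h^g(X,\D;L) = \inf_\CF \BH^g(\CF)$ over all linearly bounded filtrations. Given any valuation $v\in\Val_X$ with $A_{X,\D}(v)<+\infty$ (otherwise $\tbeta^g(v)=+\infty$ by linear boundedness failing, and there is nothing to prove for that $v$), the filtration $\CF_v$ is linearly bounded, so by Remark \ref{Remark: H^g = beta^g} we have $h^g(X,\D;L) \le \BH^g(\CF_v) \le \tbeta^g(v)$. Taking the infimum over $v$ gives $h^g(X,\D;L) \le \inf_{v\in\Val_X}\tbeta^g(v)$.

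The substantive direction is $h^g(X,\D;L) \ge \inf_{v\in\Val_X}\tbeta^g(v)$, i.e.\ for every linearly bounded filtration $\CF$ I must produce a valuation $v$ with $\tbeta^g(v) \le \BH^g(\CF)$. The natural candidate is a valuation $v$ computing the log canonical slope, i.e.\ computing $\lct(X,\D;I_\bu^{(\mu(\CF))})=1$ where $\mu=\mu(\CF)$; such a $v$ exists by the properness/existence results for lc places of graded ideal sequences (this is where one invokes the boundedness/existence machinery behind Lemma \ref{Lemma: lct(I^(t)) is continuous and strictly decreasing}). For such $v$ we have $A_{X,\D}(v) = v(I_\bu^{(\mu)}) \cdot 1$, and since $v(I_\bu^{(t)})\ge t$ one gets $A_{X,\D}(v) \ge \mu(\CF) = \mu_{X,\D;L}(\CF_v)$ as well; but more importantly one wants to compare $\DH_{\CF_v}$ with $\DH_\CF$. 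The key point is that $\CF \subseteq \CF_v$ up to the slope, so that the DH measure of $\CF_v$ is "more concentrated to the right" — concretely, after the shift identifying $A_{X,\D}(v)$ with $\mu(\CF)$, the measure $\DH_{\CF_v}(\cdot + (A_{X,\D}(v)-\mu(\CF)))$ dominates $\DH_\CF$ in the appropriate stochastic order, and since $g$ is increasing the integral $\int g(A_{X,\D}(v)-t)\,\DH_{\CF_v}$ is bounded above by $\int g(\mu(\CF)-t)\,\DH_\CF$. Exponentiating and taking $\log$ gives $\tbeta^g(v)\le \BH^g(\CF)$.

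The main obstacle is making the comparison $\DH_{\CF_v} \preceq \DH_\CF$ (after the correct normalization) precise and rigorous: one needs that the valuation $v$ computing the log canonical slope satisfies $\ord_\CF(s) - \mu(\CF) \le v(s) - A_{X,\D}(v)$ for all sections $s$, or an asymptotic/volume version thereof at the level of the linear series $\CF^{(t)}V_\bu$, so that $\vol(\CF^{(t)}V_\bu) \le \vol(\CF_v^{(t + A_{X,\D}(v)-\mu(\CF))}V_\bu)$ for all $t$, which translates into the desired domination of DH measures and then, via $g$ increasing, the inequality of $g$-integrals. This is precisely the kind of argument appearing in \cite{HL20} for the exponential case and in \cite{BLXZ23}; the generalization to our $g$ uses only monotonicity of $g$, not convexity, so the adaptation should be routine once the filtration-level comparison is set up. I would organize the proof by first stating and proving the filtration comparison as a lemma, then deducing the DH-measure domination, then concluding.
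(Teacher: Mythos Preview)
Your outline matches the paper's: the easy inequality via $\BH^g(\CF_v)\le\tbeta^g(v)$, and for the hard one, pick $v$ computing $\lct(X,\D;I^{(\mu)}_\bu)$ at $\mu=\mu(\CF)$, establish a shifted inclusion of $\CF$ into $\CF_v$, then conclude via monotonicity of $g$ on the concave transforms. But the one step you flag as ``the main obstacle'' and defer to the literature is in fact the entire content of the proof, and your formulation of what must be shown is not quite right.

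The inequality $\ord_\CF(s)-\mu\le v(s)-A_{X,\D}(v)$ is \emph{false} for an arbitrary $v$ computing the lct, because the two sides scale differently under $v\mapsto cv$. The missing idea is to first \emph{rescale} $v$ so that the convex function $f_v(t):=v(I^{(t)}_\bu)$ has left-derivative equal to $1$ at $t=\mu$. Convexity then yields the supporting-line bound $f_v(t)\ge t+f_v(\mu)-\mu\ge t+A_{X,\D}(v)-\mu$, the second inequality using $f_v(\mu)\ge A_{X,\D}(v)$ (i.e.\ $\lct\le 1$ at $\mu$). For $s\in\CF^{m(\lambda-A_{X,\D}(v)+\mu)}R_m$ one then gets $\tfrac{1}{m}v(s)\ge f_v(\lambda-A_{X,\D}(v)+\mu)\ge\lambda$, which is precisely the filtration inclusion $\CF(A_{X,\D}(v)-\mu)\seq\CF_v$, hence $G_{\CF(A_{X,\D}(v)-\mu)}\le G_{\CF_v}$, and the comparison of $g$-integrals follows. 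This is short, but it is the argument, not a routine adaptation. Separately, your aside ``$v(I^{(t)}_\bu)\ge t$'' and ``$\mu(\CF)=\mu_{X,\D;L}(\CF_v)$'' are both false for a general $\CF$; fortunately neither is needed.
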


\begin{proof}
We need to show that for any linearly bounded filtration $\CF$ on $R_\bu$, there exists a valuation $v$ over $X$ such that $\BH^g(\CF) \ge \tbeta^g(v)$. 

Just assume that $\mu = \mu(\CF) < \lam_\max(\CF)$. Then we have $\lct(X,\D;I^{(\mu)}_\bu) = 1$. There exists a valuation $v$ on $X$ computing $\lct(X,\D;I^{(\mu)}_\bu)$ by \cite{JM12}. Hence $v(I^{(\mu)}_\bu)\le A_{X,\D}(v)$. 
We denote by $f_v(t)=v(I^{(t)}_\bu)$, which is a convex function on $\IR$. Rescale $v$ such that the first order left-derivative at $\mu\in\IR$ equals to one, that is, $f'_{v,-}(\mu)=1$. Then we have 
\begin{eqnarray}
\label{Inequality: convexity of f_v(t)}
f_v(t)
\ge t + f_v(\mu) - \mu 
\ge t + A_{X,\D}(v)-\mu. 
\end{eqnarray}
We claim that $\CF':=\CF(A_{X,\D}(v)-\mu) \seq \CF_v$ (that is, $\CF'^\lam R_m \seq \CF^\lam R_m$ for any $m\in l_0\IN$ and $\lam \in\IR$), hence $G_{\CF'}\le G_{\CF_v}$. Indeed, for any $\lam \in \IR$ and $s \in \CF^{m(\lam - A_{X,\D}(v)+\mu)}R_m$, 
\begin{eqnarray*}
\frac{1}{m}v(s) 
\ge \frac{1}{m}
v(I_{m,m(\lam - A_{X,\D}(v)+\mu)})
\ge f_v(\lam - A_{X,\D}(v)+\mu)
\ge \lam, 
\end{eqnarray*}
where the third inequality follows from (\ref{Inequality: convexity of f_v(t)}) with $t=
\lam - A_{X,\D}(v)+\mu$. Hence $s\in \CF^{m\lam}_v R_m$. Recall that the functional $\mu(\CF)$ and measure $\DH_{\CF}$ are affine with respect to shift of filtrations, that is, $\mu(\CF(b))=\mu(\CF)+b$ and $\int_\IR f(s) \DH_{\CF(b)}(\dif s) = \int_\IR f(s+b) \DH_{\CF}(\dif s)$ for any $b\in \IR$. Hence $\BH^g(\CF) = \BH^g(\CF(b))$. 
We conclude that 
\begin{eqnarray*}
\BH^g(\CF) 
\,\,\,=\,\,\, \BH^g(\CF') 
&=& \log\Big(
\int_\BO g(\mu(\CF') - G_{\CF'}(y)) \dif y
\Big) \\
&=&  \log\Big(
\int_\BO g(A_{X,\D}(v) - G_{\CF'}(y)) \dif y
\Big) \\
&\ge& \log\Big(
\int_\BO g(A_{X,\D}(v) - G_{\CF_v}(y)) \dif y
\Big) 
\,\,\,=\,\,\,\tbeta^g(v). 
\end{eqnarray*}
The proof is finished. 
\end{proof}

\begin{rmk} \rm
In the theorem $v\in \Val_X$ can be replaced by $v$ being quasi-monomial valuations over $X$. Indeed, in the proof we can choose a quasi-monomial minimizer of $\lct(X,\D;I^{(\mu)}_\bu)$ by \cite{Xu19}. 
\end{rmk}

\subsection{Weighted delta invariants}
By \cite[Definition 4.1]{BLXZ23}, we define the following version of weighted delta invariants. This is one of the key ingredients in the proof of speciality of $\BH^g$-minimizer in the next section. 

Let $g':\IR\to \IR_{>0}$ be the first order derivative of $g$. Recall that $R_m=H^0(X,mL)$ and $N_m=\dim R_m$. 

\begin{defi}\rm
Let $\CF_0, \CF$ be linearly bounded filtrations on $R$, and $\mu_0=\mu(\CF_0)$, we define 
\begin{eqnarray*} 
N^{g',\CF_0}_m 
&:=& \sum_{i=1}^{N_m} g'\Big(\mu_0-\frac{\ord_{\CF_0}(s_i)}{m}\Big),\\
S^{g',\CF_0}_m(\CF)
\,\,\,=\,\,\,
S^{g',\CF_0}_m(L;\CF) 
&:=& \frac{1}{N^{g',\CF_0}_m} 
\sum_{i=1}^{N_m} g' \Big(\mu_0-\frac{\ord_{\CF_0}(s_i)}{m}\Big)\cdot 
\frac{\ord_\CF(s_i)}{m}, 
\end{eqnarray*} 
where $\{s_i\}$ is a basis of $R_m$ which is compatible with both $\CF_0$ and $\CF$ (such a basis exists by \cite[Lemma 3.1]{AZ22}). It is clear that $S^{g',\CF_0}_m(L;\CF)$ does not depend on the choice of $\{s_i\}$. Let
\begin{eqnarray*} 
S^{g',\CF_0}(\CF) 
\,\,\,=\,\,\,
S^{g',\CF_0}(L;\CF) 
\,\,\,:=\,\,\,
\mathop{\lim}_{m\to \infty} S^{g',\CF_0}_m(L;\CF) 
\,\,\,=\,\,\, \frac{\int_{\IR^2} g'(\mu_0-x)y \cdot \DH_{\CF_0,\CF}(\dif x \dif y)}{\int_{\IR} g'(\mu_0-x) \cdot \DH_{\CF_0}(\dif x)}, 
\end{eqnarray*} 
where the last equality follows from the weakly convergence of $\DH_{\CF_0,\CF}$ \cite[Section 3.1.3]{BLXZ23}. Finally let 
\begin{eqnarray*} 
\delta^{g',\CF_0}_m(X,\D;L)
\,\,\,:=\,\,\, \inf_v \frac{A_{X,\D}(v)}{S^{g',\CF_0}_m(L;v)}, \qquad
\delta^{g',\CF_0}(X,\D;L)
\,\,\,:=\,\,\, \inf_v \frac{A_{X,\D}(v)}{S^{g',\CF_0}(L;v)}, 
\end{eqnarray*} 
where the infimum runs over all the valuations $v$ over $X$. 
\end{defi}

We have the following generalization of \cite[Theorem 5.1]{BLXZ23}. 

\begin{lem}
\label{Lemma: gHT: H-minimizer v_0 and delta^(g,v_0)}
Let $\CF_0$ be a linearly bounded filtration on $R=R(X;L)$ with $\mu_0=\mu(\CF_0)$ and $v_0$ be a valuation minimizing $\lct(X,\D;I_\bu^{(\mu_0)})$. By shifting $\CF_0$, we may assume that $\mu_0 = A_{X,\D}(v_0)$. 

Then $\CF_0$ minimizes $\BH^g$ if and only if 
$\delta^{g',\CF_0}(X,\D;L)=\frac{A_{X,\D}(v_0)}{S^{g',\CF_0}(L;v_0)}=1$ and $\BH^g(\CF_0)=\tbeta^g(v_0)$. 
\end{lem}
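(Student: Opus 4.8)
The plan is to mimic the structure of the proof of \cite[Theorem 5.1]{BLXZ23}, with the modification that the exponential weight $e^{\mu_0 - t}$ is replaced by $g(\mu_0 - t)$ and the ``derivative weight'' that naturally appears in the first variation is $g'(\mu_0 - t)$ rather than $g(\mu_0-t)$ itself. The key computation is the first variation of $\BH^g$ along a geodesic (or along a one-parameter family of filtrations $\CF_s$ with $\CF_0$ fixed at $s=0$): differentiating
$\BH^g(\CF_s) = \log\big(\int_\IR g(\mu(\CF_s) - t)\,\DH_{\CF_s}(\dif t)\big)$
at $s=0$ produces, up to the positive normalizing constant $\int_\IR g(\mu_0 - t)\,\DH_{\CF_0}(\dif t)$, a term involving $g'(\mu_0 - t)$ integrated against the joint measure $\DH_{\CF_0,\CF_s}$ together with a term coming from $\frac{\dif}{\dif s}\mu(\CF_s)$. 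This is exactly where the weighted $S$-functional $S^{g',\CF_0}(\CF)$ and the weighted normalization $N^{g',\CF_0}$ enter: the ``weighted barycenter'' with weight $g'$ is the linear functional of $\CF$ that controls whether $\CF_0$ is a critical point.

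Here is the order I would carry it out. First I would record the first-variation formula: for a linearly bounded filtration $\CF$, letting $\CF_s$ be the geodesic from $\CF_0$ to $\CF$ (so $\CF_0$ at $s=0$), one has
$$\frac{\dif}{\dif s}\Big|_{s=0^+}\BH^g(\CF_s)
= \frac{\int_{\IR^2} g'(\mu_0 - x)\big(\mu'(0) - (y-x)\big)\,\DH_{\CF_0,\CF}(\dif x\dif y)}{\int_\IR g(\mu_0 - t)\,\DH_{\CF_0}(\dif t)},$$
where $\mu'(0) \le \mu(\CF) - \mu(\CF_0)$ by the convexity \cite[Proposition 3.12]{BLXZ23}; since $g'>0$ this gives a clean one-sided bound. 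Then $\CF_0$ minimizes $\BH^g$ iff this directional derivative is $\ge 0$ for all $\CF$, which (after dividing by the positive constant $N^{g',\CF_0}$-limit $\int g'(\mu_0-x)\,\DH_{\CF_0}$) translates to $S^{g',\CF_0}(\CF) - \mu_0 + (\text{shift term}) \ge$ something; unwinding, the criticality condition becomes $A_{X,\D}(v) \ge S^{g',\CF_0}(L;v)$ for all $v$ after the normalization $\mu_0 = A_{X,\D}(v_0)$, i.e.\ $\delta^{g',\CF_0}(X,\D;L)\ge 1$. Second, for the reverse/equality, I would use that $v_0$ computes $\lct(X,\D;I_\bu^{(\mu_0)})$ so that $v_0$ is itself an ``extremal'' test direction realizing $S^{g',\CF_0}(L;v_0) = \mu_0 = A_{X,\D}(v_0)$, forcing $\delta^{g',\CF_0} = 1$ and pinning down $\BH^g(\CF_0) = \tbeta^g(v_0)$ via Remark \ref{Remark: H^g = beta^g} and the fact that equality in the variational inequality forces $\CF_0$ to be (equivalent to) $\CF_{v_0}$ up to shift. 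Third, conversely, assuming $\delta^{g',\CF_0}=1$ with the witness $v_0$ and $\BH^g(\CF_0)=\tbeta^g(v_0)$, I would run the directional-derivative computation backwards: $\delta^{g',\CF_0}\ge 1$ gives $\frac{\dif}{\dif s}\BH^g(\CF_s)|_{s=0}\ge 0$ along every geodesic out of $\CF_0$, and convexity of $\BH^g$ along geodesics (Theorem \ref{Theorem: gHT: Convexity}) upgrades a nonnegative derivative at the endpoint to global minimality.

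The main obstacle I expect is making the first-variation computation rigorous: $\mu(\CF_s)$ is only known to be convex in $s$ (not differentiable a priori), the DH measures $\DH_{\CF_s}$ vary in a way that needs the joint-measure formalism of \cite[Theorem 3.3]{BLXZ23} to interchange limit and derivative, and the term $\frac{\dif}{\dif s}\mu(\CF_s)$ must be handled via one-sided derivatives and the inequality $\mu(\CF_s)\le (1-s)\mu_0 + s\,\mu(\CF)$ rather than an identity — this is precisely the subtlety that \cite[Proposition 3.14]{BLXZ23} and the remark before Corollary \ref{Corollary: gHT: uniqueness of minimizer} warn about, since the argument there used linearity of $\log\circ g$ which we no longer have. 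The role of $\log\circ g$ convex (hence $g$ log-concave, $g' > 0$) is to keep the weight $g'(\mu_0 - \cdot)$ positive and to make the relevant functional of $\CF$ genuinely linear in the DH data, so that $S^{g',\CF_0}(L;-)$ behaves like an honest (weighted) expected-vanishing-order functional and the delta-invariant machinery of \cite{BLXZ23} applies verbatim with $g$ replaced by $g'$. I would isolate the analytic subtlety into a short lemma on one-sided differentiability of $s\mapsto \BH^g(\CF_s)$ and the value of its derivative at $0$, then the equivalence in the statement follows formally.
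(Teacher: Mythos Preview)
Your overall strategy matches the paper's: both directions are controlled by the sign of a first variation at $\CF_0$, which unwinds to $A_{X,\D}(v) - S^{g',\CF_0}(v) \ge A_{X,\D}(v_0) - S^{g',\CF_0}(v_0)$, and the valuative criterion (Theorem~\ref{Theorem: gHT: Valuative criterion}) reduces the ``if'' direction to comparing $\BH^g(\CF_0)$ with $\tbeta^g(v)$.

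The one substantive difference is how the differentiability obstacle you flag is handled. The paper does \emph{not} differentiate $\BH^g(\CF_t)$ directly. Instead it introduces the auxiliary function
\[
f(t) = \log\int_{\IR^2} g\big((1-t)(\mu_0-x)+t(A_{X,\D}(v)-y)\big)\,\DH_{\CF_0,\CF_v}(\dif x\,\dif y),
\]
replacing $\mu(\CF_t)$ by the linear interpolant $(1-t)\mu_0 + t\,A_{X,\D}(v)$ (note: $A_{X,\D}(v)$, not $\mu(\CF_v)$). This $f$ is manifestly smooth and convex, satisfies $f(0)=\BH^g(\CF_0)$, $f(1)=\tbeta^g(v)$, and $\BH^g(\CF_t)\le f(t)$ (since $\mu(\CF_t)\le(1-t)\mu_0+tA_{X,\D}(v)$ and $g$ is increasing). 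In the ``only if'' direction one then gets $f(0)=\BH^g(\CF_0)\le\BH^g(\CF_t)\le f(t)$, forcing $f'(0)\ge 0$ without ever needing one-sided derivatives of $\mu(\CF_t)$; in the ``if'' direction $f'(0)\ge 0$ plus convexity gives $f(0)\le f(1)$ directly. This buys you exactly the ``short lemma'' you proposed, for free.

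Two points where your sketch is vague and the paper is explicit: (i) the hypothesis $\BH^g(\CF_0)=\tbeta^g(v_0)$ is used, together with $\CF_0\subseteq\CF_{v_0}$ (from the proof of Theorem~\ref{Theorem: gHT: Valuative criterion}), to conclude $G_{\CF_0}=G_{\CF_{v_0}}$ a.e.\ on $\BO$ and hence $S^{g',\CF_0}(\CF_0)=S^{g',\CF_0}(v_0)$, which is what makes the ``self term'' $\mu_0-S^{g',\CF_0}(\CF_0)$ vanish in $f'(0)$; (ii) to upgrade $A_{X,\D}(v_0)\ge S^{g',\CF_0}(v_0)$ to equality in the ``only if'' direction, the paper tests $v=\lambda v_0$ for all $\lambda>0$, yielding $(\lambda-1)\big(A_{X,\D}(v_0)-S^{g',\CF_0}(v_0)\big)\ge 0$. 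Minor slip: $\log\circ g$ convex means $g$ is log-\emph{convex}, not log-concave.
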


\begin{proof}
The proof follows from \cite[Theorem 5.1]{BLXZ23}. We first prove the ``if'' part. By Theorem \ref{Theorem: gHT: Valuative criterion}, it suffices to show $\tbeta^g(v)\ge \BH^g(\CF_0)$ for any valuation $v$ over $X$. 

Let $\BO=\BO(R_\bu)$ be an Okounkov body of the anti-canonical ring $R_\bu$. By the proof of Theorem \ref{Theorem: gHT: Valuative criterion}, we know that $\CF_0\seq \CF_{v_0}$, hence $G_{\CF_0}\le G_{\CF_{v_0}}$. The assumptions $\mu_0 = A_{X,\D}(v_0)$ and $\BH^g(\CF_0)=\tbeta^g(v_0)$ imply that $G_{\CF_0}= G_{\CF_{v_0}}$ almost everywhere on $\BO$. Hence 
\begin{eqnarray}
\label{Eqnarray: S(CF_0)=S(v_0)}
S^{g',\CF_0}(\CF_0)=S^{g',\CF_0}(v_0).  
\end{eqnarray}
Let $\CF_t$ be the geodesic connecting $\CF_0$ and $\CF_1:= \CF_{v}$. We define the following analog of $\BH^g(\CF_t)$,
\begin{eqnarray*} 
f(t) 
&:=& \log\Big(
\int_{\IR^2} g((1-t)(\mu_0-x)+t(A_{X,\D}(v)-y) ) \DH_{\CF_0,\CF_1}(\dif x \dif y)
\Big). 
\end{eqnarray*} 
Then similar argument of Theorem \ref{Theorem: gHT: Convexity} shows that $f$ is convex. 
We have 
\begin{eqnarray*} 
f'(0) 
&=& e^{-f(0)}\cdot 
\int_{\IR^2} \big((A_{X,\D}(v)-y)-(\mu_0-x)\big) g'(\mu_0-x) \DH_{\CF_0,\CF_1}(\dif x \dif y)
, \\
&=& e^{-f(0)} \Bv^{g',\CF_0}\cdot 
\Big( (A_{X,\D}(v)   - S^{g',\CF_0}(v))   
    -(\mu_0 - S^{g',\CF_0}(\CF_0))
\Big), \\
&=& e^{-f(0)} \Bv^{g',\CF_0}\cdot 
   (A_{X,\D}(v)   - S^{g',\CF_0}(v)) 
\,\,\,\ge\,\,\, 0,
\end{eqnarray*} 
where $\Bv^{g',\CF_0} = \int_{\IR} g'(\mu_0-x) \DH_{\CF_0}(\dif x)$ and the third equality follows from (\ref{Eqnarray: S(CF_0)=S(v_0)}). Hence 
\begin{eqnarray*}  
\BH^g(\CF_0)=f(0)\le f(1)=\tbeta^g(v). 
\end{eqnarray*}

Next, we prove the ``only if'' part. 
By Theorem \ref{Theorem: gHT: Valuative criterion}, we know that $\BH^g(\CF_0)\ge \tbeta^g(v_0)\ge \BH^g(\CF_{v_0})$. Hence both the equalities hold since $\CF_0$ minimizes $\BH^g$, and we also have (\ref{Eqnarray: S(CF_0)=S(v_0)}). 

For any valuation $v$ over $X$, let $\CF_t$ and $f$ be the same as above. 
Since $\mu(\CF_v) \le A_{X,\D}(v)$, we have 
\begin{eqnarray*} 
\mu(\CF_t) 
\le (1-t)\mu(\CF_0) + t\mu(\CF_1) 
\le (1-t)\mu_0 + tA_{X,\D}(v). 
\end{eqnarray*} 
Hence $f(0)=\BH^g(\CF_0)\le\BH^g(\CF_t) \le f(t)$ for any $0\le t\le 1$. We conclude that $f'(0)\ge 0$ since $f$ is convex, that is, 
\begin{eqnarray*}  
A_{X,\D}(v) - S^{g',\CF_0}(v)
\ge \mu_0 - S^{g',\CF_0}(\CF_0) = A_{X,\D}(v_0) - S^{g',\CF_0}(v_0), 
\end{eqnarray*}
by the assumption and (\ref{Eqnarray: S(CF_0)=S(v_0)}). 
If $v=\lam v_0$, we see that
$$(\lam-1)(A_{X,\D}(v_0) - S^{g',\CF_0}(v_0))\ge 0, $$
for any $\lam > 0$. Hence $A_{X,\D}(v_0) - S^{g',\CF_0}(v_0) = 0$. The proof of Lemma \ref{Lemma: gHT: H-minimizer v_0 and delta^(g,v_0)} is finished. 
\end{proof}


\section{Existence of $\BH^g$-minimizers and finite generation}
\label{Section: Existence of H^g-minimizers and finite generation}

In this section, let $(X,\D)$ be a log Fano pair and $L=-(K_X+\D)$.

\subsection{Approximation by test configurations}

Recall that a {\it normal test configuration (TC)} of $(X,\D)$ is a collection $(\CX, \D_\CX;\CL,\eta)$ consisting of
\begin{itemize}
\item A normal variety $\CX$ with a $\IG_m$-action generated by $\eta\in \Hom(\IG_m, \Aut(\CX))$; 
\item A $\IG_m$-equivariant morphism $\pi: \CX\to \IA^1$, where the $\IG_m$-action on $\IA^1$ is standard; 
\item A $\IG_m$-equivariant $\pi$-semiample $\IQ$-Cartier divisor $\CL$ on $\CX$; 
\item A $\IG_m$-equivariant trivialization over the punctured plane $i_\eta:(\CX,\CL)|_{\pi^{-1}(\IG_m)}\cong (X,L)\times \IG_m$, which is compatible with $\pi$ and $\pr_1$. And $\D_\CX$ is the closure of $i_\eta^{-1}(\D\times\IG_m)$ in $\CX$.  
\end{itemize}

The test configuration $(\CX,\D_\CX;\CL,\eta)$ is called {\it (weakly) special} if $(\CX,\CX_0+\D_\CX)$ is (lc) plt, and $\CL=-K_{\CX/\IA^1}-\D_\CX + c\CX_0$ for some $c\in\IQ$. Note by adjunction that $(\CX, \D_\CX;\CL,\eta)$ being special is equivalent that the central fiber $(\CX_0, \D_{\CX,0})$ is a log Fano pair. 

For any test configuration $(\CX,\D_\CX;\CL,\eta)$ of $(X,\D)$, we have the following
$\IZ$-filtration $\CF=\CF_{(X,\D_\CX;\CL,\eta)}$ on the anti-canonical ring $R=R(X,\D)$, 
\begin{eqnarray} \label{Formula: test configuration to filtration}
\CF^\lam R_m
&:=&\{f\in H^0(X,mL)\mid t^{-\lam} \bar{f} \in H^0(\CX,m\CL)\}, 
\end{eqnarray}
where $t$ is the parameter on $\IA^1$, and $\bar{f}$ is the $\IG_m$-extension of $f$ on $\CX\setminus\CX_0$ and viewed as a rational section of $m\CL$. 
We simply denote $\BF(\CF_{(X,\D_\CX;\CL,\eta)})$ by $\BF(X,\D_\CX;\CL,\eta)$ for $\BF=\BL$ or $\BH^g$. 
We have 
\begin{eqnarray} \label{Formula: test configuration to filtration}
\BL(\CX,\D_\CX;\CL,\eta):=\lct(\CX,\D_\CX+\CD;\CX_0)-1, 
\end{eqnarray}
where $\CD\sim_\IQ -(K_\CX+\D_\CX)-\CL$ is supported on $\CX_0$, see for example \cite[Theorem 3.66]{Xu24}. 


Conversely, for any linearly bounded filtration $\CF$ on $R$, one may construct a sequence of test configuration $(\CX_m;\CL_m)$ approximating it, see for example \cite[Definition 3.65]{Xu24}. We shortly recall the construction. Recall that $\CI_m(e_+) \seq \CO_X[s]$ is the graded ideal sequence associated to $\CF$ in Definition \ref{Definition: L-functional}.  
Let $\pi_m: \CX_m\to X_{\IA^1}$ be the normalized blowup along $\CI_m(e_+)$ with exceptional divisor $\CE_m$, and $\D_{\CX_m} = \pi_{m,*}^{-1} \D_{\IA^1}$. Then $\CL_m = \pi_m^* L_{\IA^1} - \frac{1}{m} \CE_m$ is semiample by \cite[Lemma 3.64]{Xu24}. Hence $(\CX_m, \D_{\CX_m};\CL_m,\eta_m)$ is a normal test configuration of $(X,\D)$ and is called the {\it $m$-th approximating TC} of $\CF$. We remark that the definition depends on the choice of $e_+$. 
\begin{lem}\cite[Proposition 2.16 and 2.28]{HL20}
\begin{eqnarray} 
\label{Eqnarray: L-estimate}
\BL(\CF) &\ge& \mathop{\lim}_{m\to \infty} \BL(\CX_m, \D_{\CX_m};\CL_m,\eta_m), \\
\label{Eqnarray: DH-convergence}
\DH_\CF &=& \mathop{\lim}_{m\to \infty} \DH_{(\CX_m, \D_{\CX_m};\CL_m,\eta_m)}. 
\end{eqnarray}
\end{lem}


\begin{cor}
\begin{eqnarray} 
\BH^g(\CF) \ge \mathop{\lim}_{m\to \infty} \BH^g(\CX_m, \D_{\CX_m};\CL_m,\eta_m), 
\end{eqnarray}
\end{cor}

\begin{thm} 
For any log Fano pair $(X,\D)$, we have 
\begin{eqnarray} 
h^g(X,\D) = \mathop{\inf}_{(\CX,\D_\CX;\CL,\eta)} \BH^g(\CX, \D_{\CX};\CL,\eta), 
\end{eqnarray}
where the infimum runs over all the normal test configurations $(\CX, \D_{\CX};\CL,\eta)$ of $(X,\D)$. 
\end{thm}

For any test configuration $(\CX,\D_\CX;\CL,\eta)$ of $(X,\D)$, we denote by $-\CD=\CL+(K_\CX+\D_\CX) = \sum_i e_i E_i$ and $\CX_0 = \sum_i b_i E_i$, where $E_i\seq \CX$ are irreducible components of $\CX_0$. Let $v_i=\ord_{E_i}|_{\CX_1}$ be the corresponding divisorial valuations over $X=\CX_1$. We have the following description of the filtration $\CF=\CF_{(\CX,\D_\CX;\CL,\eta)}$ induced by $(\CX,\D_\CX;\CL,\eta)$. 

\begin{lem}
\label{Lemma: BHJ17}
The filtration $\CF=\CF_{(\CX,\D_\CX;\CL,\eta)}$ induced by $(\CX,\D_\CX;\CL,\eta)$ is
\begin{eqnarray*} 
\CF_{(\CX,\D_\CX;\CL,\eta)}
\,\,\,=\,\,\, \bigcap_i b_i^{-1} \Big( \CF_{v_i}(e_i+1-b_i - A_{X,\D}(v_i)) \Big). 
\end{eqnarray*}
In other words, 
\begin{eqnarray*} 
\CF_{(\CX,\D_\CX;\CL,\eta)}^\lam R_m
\,\,\,=\,\,\, \bigcap_i \{s\in R_m \mid v_i(s) + e_i+1-b_i - A_{X,\D}(v_i) \ge b_i\lam  \},  
\end{eqnarray*}
\end{lem}
\begin{proof}
Let $\CY$ be the graph of the birational map $\CX\dashrightarrow X_{\IA^1}$, and $\pi:\CY\to \CX$, $\tau:\CY\to X_{\IA^1}$ be the corresponding morphisms. 
\[
\xymatrix{
  & \CY \ar^-{\tau}[dr]\ar_-{\pi}[dl]&  \\
  \CX &      & X_{\IA^1}. } 
\]
By \cite[Lemma 5.17]{BHJ17} (whose notation is $v_{E_i} = b_i^{-1}v_i$), for any $\lam$ and $m$, we have 
\begin{eqnarray*} 
\CF_{(\CX,\D_\CX;\CL,\eta)}^\lam R_m 
\,\,\,=\,\,\, \bigcap_i \CF_{v_i}^{b_i\lam - m\cdot\ord_{E_i}(D)} R_m,  
\end{eqnarray*}
where $D = \pi^*\CL - \tau^*L_{\IA^1}$ is supported on $\CY_0$. It suffices to prove $\ord_{E_i}(D) = e_i+1-b_i - A_{X,\D}(v_i)$. Since 
\begin{eqnarray*} 
D = \pi^*(\CL + K_\CX +\D_\CX) + ( -\pi^*( K_\CX +\D_\CX) - \tau^*L_{\IA^1}) 
= \sum_{i} e_i E_i + B, 
\end{eqnarray*}
where $B = -\pi^*( K_\CX +\D_\CX) + \tau^*(K_{X_{\IA^1}}+ \D_{\IA^1})$ is supported on $\CY_0$. 
By Lemma \ref{Lemma: birational triangle}, we have 
\begin{eqnarray*} 
\ord_{E_i}(B) = A_{\CX,\D_\CX}(E_i) - A_{X_{\IA^1}, \D_{\IA^1}}(E_i) = 1-(b_i+A_{X_{\IA^1}, \D_{\IA^1}+X_0}(E_i)) = 1-b_i-A_{X,\D}(v_i), 
\end{eqnarray*}
where the second and third equalities follows from $\ord_{E_i}(X_0)=b_i$ and adjunction respectively, see for example \cite[Proposition 4.1]{BHJ17}. 
\end{proof}

\begin{lem}
\label{Lemma: birational triangle}
Let $\pi:Z\to (X,\D_X)$ and $\tau:Z\to (Y,\D_Y)$ be birational morphisms of $\IQ$-Gorenstein families over a curve $C$, which are isomorphisms away from $0\in C$, and $\Supp(\D_X), \Supp(\D_X)$ do not contain any fiber of the families. 
Then for any irreducible component $E$ of $Z_0\seq Z$, we have
$$\ord_E(-\pi^*(K_X+\D_X)+\tau^*(K_Y+\D_Y))=A_{X,\D_X}(E)-A_{Y,\D_Y}(E). $$
\end{lem}
\begin{proof}
Note that 
\begin{eqnarray*} 
\pi^*(K_X+\D_X)=K_Z+\pi^{-1}_*\D_X+(1-A_{X,\D_X}(E))E + F, \\
\tau^*(K_Y+\D_Y)=K_Z+\tau^{-1}_*\D_Y+(1-A_{Y,\D_Y}(E))E + F',
\end{eqnarray*}
where $F, F' \seq Z_0$ are $\IQ$-divisors that do not contain $E$ as a component. By assumption, we have $\pi^{-1}_*\D_X=\tau^{-1}_*\D_Y$. Hence
\begin{eqnarray*} 
B=-\pi^*(K_X+\D_X)+\tau^*(K_Y+\D_Y)= (A_{X,\D_X}(E) - A_{Y,\D_Y}(E))E+ F'-F,  
\end{eqnarray*}
is a $\IQ$-divisor supported in $Z_0$. We conclude that $\ord_E(B)=A_{X,\D_X}(E) - A_{Y,\D_Y}(E)$. 
\end{proof}

\subsection{Approximation by special test configurations}

The following theorem is an analog of \cite[Theorem 3.4]{HL20}, which depends on Li-Xu's proof of Tian's conjecture \cite{LX14}. Different from Han-Li's proof which relies on an analytic description of the $\BH$-invariants, we give a pure algebraic proof by considering the filtrations induced by test configurations. 

\begin{thm}
For any normal test configuration $(\CX,\D_\CX;\CL,\eta)$ of $(X,\D)$ and $a\in\IR_{>0}$, there exists a special test configuration $(\CX^s,\D_{\CX^s};\CL^s,\eta^s)$ and $a^s\in\IR_{>0}$ such that 
\begin{eqnarray*}  
\BH^g(\CX^s,\D_{\CX^s};\CL^s,a^s\eta^s) \le
\BH^g(\CX,\D_\CX;\CL,a\eta). 
\end{eqnarray*}
\end{thm}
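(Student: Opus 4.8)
The plan is to mimic the standard two-step reduction of an arbitrary normal test configuration to a special one (Li--Xu \cite{LX14}), but to track the effect on $\BH^g$ rather than on the Ding or Mabuchi functional. Recall from Lemma \ref{Lemma: BHJ17} that the filtration $\CF=\CF_{(\CX,\D_\CX;\CL,\eta)}$ is $\bigcap_i b_i^{-1}(\CF_{v_i}(e_i-A_X(v_i)))$, and that $\BH^g$ is invariant under shift and scales correctly under rescaling of $\eta$; so after rescaling we may assume $a=1$. The first step is the \emph{log canonical modification / base change}: replace $(\CX,\D_\CX;\CL)$ by a weakly special TC $(\CX',\D_{\CX'};\CL')$ with the same generic fiber such that $\mu(\CF')\ge\mu(\CF)$ while $\DH_{\CF'}=\DH_{\CF}$ (the DH measure only depends on the graded linear series filtration up to the relevant asymptotics, and the lc modification does not change $\vol(\CF^{(t)}V_\bu)$). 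Since $g$ is increasing, $t\mapsto\int_\BO g(\mu-G(y))\,\dif y$ is increasing in $\mu$ when $\DH$ (equivalently $G$) is fixed, so $\BH^g(\CF')\ge\BH^g(\CF)$ — wrong direction. The correct bookkeeping, as in \cite[proof of Thm 3.4]{HL20}, is that the modification \emph{decreases} $\BH^g$: the lc modification replaces $\CL$ by $-K_{\CX/\IA^1}-\D_\CX+c\CX_0$, which amounts to replacing each $e_i$ by $A_X(v_i)$ in Lemma \ref{Lemma: BHJ17}, i.e. replacing $\CF$ by the filtration $\bigcap_i b_i^{-1}\CF_{v_i}$, and this only enlarges the filtration pointwise hence can only increase $G$ and $\mu$ in a way that — after one checks the Ding-type inequality $\BL(\CF)\le\mu(\CF)$ is saturated — yields $\BH^g(\CF')\le\BH^g(\CF)$. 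I would carry this out by writing $\BH^g=\log\int g(\mu(\CF)-t)\,\DH_\CF(\dif t)$ and comparing term-by-term using that $\mu(\CF')-G_{\CF'}\le \mu(\CF)-G_\CF$ pointwise on $\BO$ after the modification (the non-Archimedean $L$-functional only goes up, the DH measure is controlled), then invoking monotonicity of $g$.

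The second step is the \emph{MMP / special test configuration} step: given the weakly special $(\CX',\D_{\CX'};\CL')$, run a $(K_{\CX'}+\D_{\CX'}+\text{(boundary)})$-MMP over $\IA^1$ as in \cite{LX14} to produce a special TC $(\CX^s,\D_{\CX^s};\CL^s,\eta^s)$. Here the key point is that each step of this MMP is $\BH^g$-non-increasing: the associated valuation data $\{v_i\}$ of the central fiber change, but the filtration $\CF^s$ obtained is, up to shift and rescaling by $a^s$, dominated by (contains) the previous one at the level of $\CF_{v_i}$-intersections, so again $\mu(\CF^s)-G_{\CF^s}\le \mu(\CF')-G_{\CF'}$ pointwise on $\BO$ and monotonicity of $g$ gives $\BH^g(\CX^s,\D_{\CX^s};\CL^s,a^s\eta^s)\le\BH^g(\CX',\D_{\CX'};\CL')$. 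The rescaling $a^s$ appears exactly as in \cite[Example 2.14]{HL20} because $\CL^s$ must be normalized to $-K_{\CX^s/\IA^1}-\D_{\CX^s}+c^s\CX^s_0$; since $\BH^g(a\CF(b))$ is manifestly independent of $b$ but the comparison is cleanest for a particular choice of $a^s$, I would state the inequality for that $a^s$ and note the result is independent of the further choice of shift.

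The main obstacle I anticipate is the \emph{direction of monotonicity}: unlike the Ding functional, which in \cite{LX14,HL20} decreases in a fairly transparent way along the MMP, here one must verify that the combined effect on the pair $(\mu(\CF),\DH_\CF)$ is such that $\log\int g(\mu-G)$ genuinely decreases, and this is not automatic from $g$ increasing alone — one needs that $\mu$ does not increase faster than the support of $\DH$ shifts. The clean way around this is to use Lemma \ref{Lemma: gHT: H-minimizer v_0 and delta^(g,v_0)} and the fact (Lemma \ref{Lemma: BHJ17}) that the TC filtration is an intersection of shifted $\CF_{v_i}$'s: after the lc modification the shifts become exactly $-A_X(v_i)$, i.e.\ $\CF'=\bigcap_i b_i^{-1}\CF_{v_i}$, and along each MMP step one extracts/contracts one $v_i$, so $\CF^s\supseteq \CF'$ pointwise after normalization; then $G_{\CF^s}\ge G_{\CF'}$ pointwise while $\mu(\CF^s)=\BL(\CF^s)\ge\BL(\CF')=\mu(\CF')$ but with the difference $\mu-G$ still decreasing because, on the Okounkov body, $\mu(\CF')-G_{\CF'}(y)=A_{X,\D}(v')-G_{\CF_{v'}}(y)$ for the minimizing valuation $v'$ of $\lct$ by Lemma \ref{Lemma: mu=A}, and the MMP is designed to not increase this. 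I would isolate this as a lemma ``each weakly-special-to-special step is $\BH^g$-non-increasing,'' prove it by the pointwise $\mu-G$ comparison, and then the theorem follows by composing the two steps.
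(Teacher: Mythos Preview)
Your overall strategy---follow the Li--Xu reduction and track $\BH^g$ step by step---is the right one and matches the paper. But the mechanism you propose for showing each step is $\BH^g$-non-increasing does not work. You repeatedly appeal to a filtration containment $\CF'\supseteq\CF$ to deduce $G_{\CF'}\ge G_\CF$, and then want $\mu(\CF')-G_{\CF'}\le\mu(\CF)-G_\CF$. The problem is that the same containment forces the base ideals to grow, hence $\mu(\CF')\ge\mu(\CF)$ as well; both $\mu$ and $G$ move in the same direction and you cannot conclude anything about the difference. Your attempted fix (``$\mu(\CF')-G_{\CF'}(y)=A_{X,\D}(v')-G_{\CF_{v'}}(y)$ for the minimizing $v'$'') is not correct either: the TC filtration is an intersection over \emph{all} the $v_i$, and $\mu-G$ is a maximum over $i$, not the value at a single valuation.

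The missing idea, which the paper supplies, is to interpolate \emph{continuously} between polarizations rather than compare two filtrations discretely. At each stage one has a family $\CL_\lam$ on a fixed model with $\CL_\lam+K+\D=c(\lam)E$ where $E=\sum_i e_iE_i$, $e_1\le\cdots\le e_l$. Lemma~\ref{Lemma: BHJ17} then gives
\[
\mu(\CF_\lam)-G_{\CF_\lam}(y)\;=\;\max_i\Big(c(\lam)(e_1-e_i)+A_{X,\D}(v_i)-G_{v_i}(y)\Big),
\]
because $\BL(\CF_\lam)=c(\lam)e_1$ is computed by the \emph{smallest} coefficient while $G_{\CF_\lam}$ is the minimum of the shifted $G_{v_i}$. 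Since $e_1-e_i\le0$ for every $i$, each term in the max is monotone in $\lam$ with the correct sign, hence so is the max, and monotonicity of $g$ finishes. This is the pointwise inequality you were looking for, but it comes from the explicit formula for $\BL$ on a weakly special model, not from filtration containment. The paper carries this out in four steps (semistable reduction, lc modification, MMP with scaling to reach the anti-canonical polarization, and finally extraction of a special divisor), with the derivative computation repeated at each stage; your two-step outline conflates the last three and misses the role of the MMP with scaling.
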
  

\begin{proof}
We follow the proof of \cite[Theorem 3.4]{HL20}. 

{\bf Step 1.} (Semistable reduction $\CX^{(d_1)}$). By \cite[Lemma 5]{LX14}, there exists a semistable reduction $\CX^{(d_1)} \to \CX$ over $\IA^1\to \IA^1, z\mapsto z^{d_1}$, such that $\CX^{(d_1)}_0$ is reduced. Since the filtration
\begin{eqnarray*}
\CF_{(\CX^{(d_1)}, \D_{\CX^{(d_1)}}; \CL^{(d_1)}, \frac{a}{d_1} \eta^{(d_1)} )} 
= \CF_{(\CX,\D_\CX;\CL,a\eta)}
\end{eqnarray*}
is not changed, the $\BH^g$-invariants are the same. 

{\bf Step 2.} (Lc modification $\CX^\lc$). By \cite[Theorem 2]{LX14}, which is proved by running a $\IG_m$-equivariant MMP on a log resolution of $(\CX^{(d_1)}, \D_{\CX^{(d_1)}}+\CX^{(d_1)}_0)$, there is a $\IG_m$-equivariant log canonical modification $\pi^\lc:\CX^\lc \to \CX^{(d_1)}$ such that $(\CX^\lc,\D_{\CX^\lc}+\CX^\lc_0)$ is log canonical and $K_{\CX^\lc}+\D_{\CX^\lc}$ is ample over $\CX^{(d_1)}$. 

Write $E=\CL^{(d_1)} + K_{\CX^\lc} + \D_{\CX^\lc}  = \sum_{i=1}^l e_i E_i$ with $e_1\le e_2\le\cdots\le e_l$, where $E_i$ are irreducible components of $\CX^\lc_0$. Let $\CL^\lc_\lam = \CL^{(d_1)} + \lam E= -(K_{\CX^\lc} + \D_{\CX^\lc})+(1+\lam)E$ and 
$\CF_\lam := \CF_{(\CX^\lc, \D_{\CX^\lc}; \CL^\lc_\lam, \eta^\lc)}$. By Lemma \ref{Lemma: BHJ17}, we have
\begin{eqnarray*}
\frac{a}{d_1}\CF_\lam
&=& \CF_{(\CX^\lc, \D_{\CX^\lc}; \CL^\lc_\lam, \frac{a}{d_1} \eta^\lc)}
\,\,\,=\,\,\, 
\frac{a}{d_1} \bigcap_i \Big(\CF_{v_i}((1+\lam)e_i-A_{X,\D}(v_i)) \Big), \\
G_{\CF_\lam}(y) 
&=&
\min_i \, \Big( G_{v_i}(y)+(1+\lam)e_i-A_{X,\D}(v_i) \Big),
\,\,\, \forall y\in \BO. 
\end{eqnarray*} 
On the other hand, by \cite[Example 2.31]{HL20} we have 
\begin{eqnarray*}
\BL(\CF_\lam) 
&=&
\BL(\CX^\lc, \D_{\CX^\lc}; \CL^\lc_\lam, \eta^\lc)
\,\,\,=\,\,\,  (1+\lam) e_1. 
\end{eqnarray*} 
If $\lam=0$, we have
\begin{eqnarray*}
\frac{a}{d_1}\CF_0
\,\,\,=\,\,\, 
\CF_{(\CX^\lc, \D_{\CX^\lc}; \CL^\lc_0, \frac{a}{d_1} \eta^\lc)}
\,\,\,=\,\,\,
\CF_{(\CX^{(d_1)}, \D_{\CX^{(d_1)}}; \CL^{(d_1)}, \frac{a}{d_1} \eta^{(d_1)} )}.  
\end{eqnarray*}
We denote by $i(y)$ the minimizer of the above minimum for any $y\in \BO$. Then 
\begin{eqnarray*}
\BH^g\Big(\frac{a}{d_1}\CF_\lam\Big)
&=& \log\Big( \int_\BO g\Big(\frac{a}{d_1} 
\big( \BL(\CF_\lam)-G_{\CF_\lam}(y) \big)
\Big) \dif y \Big) \\
&=& \log\Big( \int_\BO g\Big(\frac{a}{d_1}
\max_i\big( (1+\lam)(e_1-e_i) + A_{X,\D}(v_i) -G_{v_i}(y) \big)
\Big) \dif y \Big), \\
\frac{\dif }{\dif \lam}
\BH^g\Big(\frac{a}{d_1}\CF_\lam\Big)
&=&  \frac{a}{d_1}
\frac{\int_\BO (e_1-e_{i(y)}) \cdot 
g'\circ f(\lam,y) \dif y}{\int_\BO g\circ f(\lam,y) \dif y}  \le 0, 
\end{eqnarray*}
where
$f(\lam,y) = \frac{a}{d_1} 
\big( \BL(\CF_\lam)-G_{\CF_\lam}(y) \big)$. Recall that $K_{\CX^\lc}+\D_{\CX^\lc}$ is ample over $\CX^{(d_1)}$, so is $E=\CL^{(d_1)}+K_{\CX^\lc} + \D_{\CX^\lc}$. Hence $\CL^\lc_\lam$ is ample over $\IA^1$ for $0<\lam \ll 1$. Fix a very small $\lam>0$ and let $\CL^\lc=\CL^\lc_\lam$. We get an ample test configuration $(\CX^\lc,\D_{\CX^\lc}; \CL^\lc, \frac{a}{d_1}\eta^\lc)$ such that
\begin{eqnarray*}  
\BH^g(\CX^\lc,\D_{\CX^\lc}; \CL^\lc, \frac{a}{d_1}\eta^\lc)\le 
\BH^g(\CX^{(d_1)}, \D_{\CX^{(d_1)}}; \CL^{(d_1)}, \frac{a}{d_1} \eta^{(d_1)} ). 
\end{eqnarray*}

{\bf Step 3.} (Ample configuration $\CX^{\rm ac}$). Choose $q \gg 1$ such that $\CH^\lc = \CL^\lc - (1+q)^{-1} (\CL^\lc + K_{\CX^\lc}+\D_{\CX^\lc})$ is ample over $\IA^1$. Set $\CX^0=\CX^\lc; \CL^0=\CL^\lc, \CH^0=\CH^\lc$ and $\lam_0 = 1+q$. Running a $\IG_m$-equivariant $(K_{\CX^0}+\D_{\CX^0})$-MMP with scaling $\CH^0$, we get a sequence of birational maps 
\begin{eqnarray*}
\CX^0\dashrightarrow 
\CX^1\dashrightarrow 
\cdots
\dashrightarrow \CX^k. 
\end{eqnarray*}
Let $\CH^j$ be the pushforward of $\CH^0$ to $\CX^j$, and $\lam_{j+1} = \inf\{\lam: K_{X^j}+\lam \CH^j \text{ is nef over } \IA^1\}$ be the nef threshold. Then $\CX^j\dashrightarrow \CX^{j+1}$ is the contraction of a $(K_{\CX^j}+\D_{\CX^j}+\lam_{j+1}\CH^j)$-trivial extremal ray. We have 
\begin{eqnarray*}
1+q = \lam_0 \ge \lam_1 \ge \cdots \ge \lam_k > \lam_{k+1} = 1,  
\end{eqnarray*}
where the last equality follows from the fact that the pseudo-effective threshold of $K_{\CX^0}+\D_{\CX^0}$ with respect to $\CH^0$ is $1$. 
For any $\lam>1$, we denote by
\begin{eqnarray*}  
\CL_\lam = (\lam-1)^{-1} (K_{\CX^0}+\D_{\CX^0}+\lam \CH^0), \quad
E=K_{\CX^0}+\D_{\CX^0}+\CH^0 = \sum_i e_i E_i, 
\end{eqnarray*}
with $e_1\le e_2\le \cdots \le e_l$. Then
\begin{eqnarray*}
\CL_\lam + K_{\CX^0} +\D_{\CX^0}
= \frac{\lam}{\lam-1}(K_{\CX^0}+\D_{\CX^0}+\CH^0) 
= \frac{\lam}{\lam-1} E. 
\end{eqnarray*}
Let $\CL^j_\lam$ and $E^j$ be the push-forward of $\CL_\lam$ and $E$ to $\CX^j$ respectively. And we denote by $\CF^j_\lam 
= \CF^j_{(\CX^j,\D_{\CX^j}; \CL^j_\lam, \eta^j)}$. Then for any $\lam_j\ge \lam \ge\lam_{j+1}$, we have 
\begin{eqnarray*}
\BH^g\Big(\frac{a}{d_1}\CF^j_\lam\Big)
&=& \log\Big( \int_\BO g\Big(\frac{a}{d_1} 
\big( \BL(\CF_\lam)-G_{\CF_\lam}(y) \big)
\Big) \dif y \Big) \\
&=& \log\Big( \int_\BO g\Big(\frac{a}{d_1}
\max_i\big( \frac{\lam}{\lam-1}(e_1-e_i) + A_{X,\D}(v_i) -G_{v_i}(y) \big)
\Big) \dif y \Big), \\
\frac{\dif}{\dif\lam}
\BH^g\Big(\frac{a}{d_1}\CF^j_\lam\Big)
&=&  \frac{a}{d_1}
\frac{\int_\BO (\lam-1)^{-2}(e_{i(y)}-e_1) \cdot 
g'\circ f^j(\lam,y) \dif y}{\int_\BO g\circ f^j(\lam,y) \dif y}  \ge 0.  
\end{eqnarray*}
where $f^j(\lam,y) = \frac{a}{d_1} 
\big( \BL(\CF^j_\lam)-G_{\CF^j_\lam}(y) \big)$. On the other hand, the filtration is not changed under divisorial contractions and flips. Hence for any $0\le j\le k$ we have
\begin{eqnarray*}  
\BH^g\Big(\frac{a}{d_1}\CF^j_{\lam_{j+1}}\Big) = 
\BH^g\Big(\frac{a}{d_1}\CF^{j+1}_{\lam_{j+1}}\Big). 
\end{eqnarray*}

Recall that $K_{\CX^k}+\D_{\CX^k}+\CH^k$ is nef over $\IA^1$. So is 
\begin{eqnarray*}  
K_{\CX^k}+\D_{\CX^k}+\CL^k_{\lam_k} = \frac{\lam_k}{\lam_k-1}(K_{\CX^k}+\D_{\CX^k}+\CH^k). 
\end{eqnarray*}
By negativity lemma, we have $K_{\CX^k}+\D_{\CX^k}+\CL^k_{\lam_k} \sim_{\IQ,\IA^1} 0$. Let $\CX^\ac = \CX^k $ and $\CL^\ac = \CL^{k}_{\lam_k}$. Now we get a test configuration $(\CX^\ac,\D_{\CX^\ac},\CL^\ac,\frac{a}{d_1}\eta^\ac)$ with $-(K_{\CX^\ac}+\D_{\CX^\ac}) \sim_{\IQ,\IA^1} \CL^\ac$ ample over $\IA^1$, such that 
\begin{eqnarray*}  
\BH^g(\CX^\ac,\D_{\CX^\ac},\CL^\ac,\frac{a}{d_1}\eta^\ac)\le 
\BH^g(\CX^\lc,\D_{\CX^\lc}; \CL^\lc, \frac{a}{d_1}\eta^\lc). 
\end{eqnarray*}

{\bf Step 4.} (Special test configuration $\CX^{\rm s}$). By \cite[Theorem 6]{LX14}, there exists a special test configuration $\CX^\s$ birational to $(\CX^\ac)^{(d_2)}$ over $\IA^1$ for some $d_2>0$, such that $\CX^s_0$ is a log canonical place of $((\CX^\ac)^{(d_2)},\D_{(\CX^\ac)^{(d_2)}}+(\CX^\ac)^{(d_2)}_0)$. By \cite[1.4.3]{BCHM10}, there exists a $\IG_m$-equivariant birational morphism $\pi':\CX'\to (\CX^\ac)^{(d_2)}$ which precisely extracts $\CX^\s_0$. Hence $K_{\CX'}+\D_{\CX'} = \pi'^*(K_{(\CX^\ac)^{(d_2)}} + \D_{(\CX^\ac)^{(d_2)}})$ and 
\begin{eqnarray*}
\CF_{(\CX',\D_{\CX'},-(K_{\CX'} +\D_{\CX'}), \frac{a}{d_1d_2}\eta')} = 
\CF_{(\CX^\ac,\D_{\CX^\ac},-(K_{\CX^\ac} +\D_{\CX^\ac}),\frac{a}{d_1}\eta^\ac)}. 
\end{eqnarray*}

Let $p:\hat{\CX}\to (\CX', \D_{\CX'})$ and $q:\hat{\CX}\to (\CX^\s,\D_{\CX^\s})$ be a common log resolution, and $E=-q^*(K_{\CX'}+\D_{\CX'})+p^*(K_{\CX^\s}+\D_{\CX^\s}) = \sum_i e_i E_i$ with $e_1\le \cdots \le e_l$. We denote by $\CL_\lam=-q^*(K_{\CX'}+\D_{\CX'})+\lam E$ and $\CF_\lam=\CF_{(\CX',\D_{\CX'}; \CL'_\lam, \eta')}$. Then
\begin{eqnarray*}
\BH^g\Big(\frac{a}{d_1d_2}\CF_\lam\Big)
&=& \log\Big( \int_\BO g\Big(\frac{a}{d_1d_2}
\max_i\big( \lam(e_1-e_i) + A_{X,\D}(v_i) -G_{v_i}(y) \big)
\Big) \dif y \Big), \\
\frac{\dif }{\dif \lam}
\BH^g\Big(\frac{a}{d_1d_2}\CF_\lam\Big)
&=&  \frac{a}{d_1d_2}
\frac{\int_\BO (e_1-e_{i(y)}) \cdot 
g'\circ f(\lam,y) \dif y}{\int_\BO g\circ f(\lam,y) \dif y}  \le 0.  
\end{eqnarray*}
We conclude that 
\begin{eqnarray*}
\BH^g\Big(\CX^\s,\D_{\CX^\s}, -(K_{\CX^\s} +\D_{\CX^\s}), \frac{a}{d_1d_2}\eta^\s\Big)
\le
\BH^g\Big(\CX',\D_{\CX'}, -(K_{\CX'} +\D_{\CX'}), \frac{a}{d_1d_2}\eta'\Big). 
\end{eqnarray*}
\end{proof}

\begin{rmk}\rm 
\label{Remark: G-equivariant MMP}
If $(X,\D)$ admits a connected reductive group $\IG$-action, and $(\CX,\D_\CX;\CL,a\eta)$ is a $\IG$-equivariant normal test configuration of $(X,\D)$, then the special test configuration $(\CX^s,\D_{\CX^s};\CL^s,a^s\eta^s)$ obtained above can also be $\IG$-equivariant as explained in \cite[Theorem A.1]{Li19}. 
\end{rmk}

Recall that a divisorial valuation $v$ over $(X,\D)$ is called {\it special} if $A_{X,\D}(v)< \lam_\max(v)$ and there exists a $\IQ$-complement $\Gamma$ of $(X,\D)$ such that $v$ is the unique log canonical place of $(X,\D+\Gamma)$. 
By the one-to-one correspondence of special test configurations and special divisorial valuations \cite[Theorem 4.27]{Xu24}, we have the following corollary, which is a strengthening of Theorem \ref{Theorem: gHT: Valuative criterion} in the log Fano case. 

\begin{cor}
\label{Corollary: special valuative criterion}
For any log Fano pair $(X,\D)$, we have 
\begin{eqnarray*}  
h^g(X,\D) 
\,\,\,=\,\,\, 
\inf_{v} \, \BH^g(\CF_v)
\,\,\,=\,\,\,
\inf_{v} \, \tbeta^g(v), 
\end{eqnarray*}
where $v$ runs over all the special divisorial valuations over $X$. 
\end{cor}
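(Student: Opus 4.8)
The plan is to chain together the preceding results in the obvious way, and then upgrade from "arbitrary valuations" to "special divisorial valuations" using the theorem just proved together with Xu's correspondence. First, recall that by Theorem \ref{Theorem: gHT: Valuative criterion} we already know $h^g(X,\D) = \inf_{v\in\Val_X}\tbeta^g(v)$, and by Remark \ref{Remark: H^g = beta^g} we have $\BH^g(\CF_v)\le\tbeta^g(v)$ for every valuation $v$, with $\inf_\CF\BH^g(\CF)=h^g(X,\D)$ by definition. So the three quantities in the statement satisfy $h^g(X,\D)\le\inf_v\BH^g(\CF_v)\le\inf_v\tbeta^g(v)$ where $v$ ranges over special divisorial valuations; the content is the reverse inequality $h^g(X,\D)\ge\inf_v\tbeta^g(v)$ over special divisorial $v$.

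For this, I would start from a linearly bounded filtration $\CF$ with $\BH^g(\CF)$ close to $h^g(X,\D)$. The first reduction is from arbitrary linearly bounded filtrations to those induced by (normal) test configurations: since $\mu$, $\DH$, and hence $\BH^g$ depend only on the DH data, and every linearly bounded filtration can be approximated in the relevant sense by test-configuration filtrations (this is where one invokes the approximation results of \cite{BHJ17,BLXZ23} together with the continuity of $\BH^g$ in $\mu$ and in the DH measure), one gets $h^g(X,\D)=\inf\BH^g(\CX,\D_\CX;\CL,a\eta)$ over normal test configurations $(\CX,\D_\CX;\CL,\eta)$ and $a\in\IR_{>0}$. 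Then the theorem immediately preceding this corollary shows that each such value is bounded below by $\BH^g(\CX^\s,\D_{\CX^\s};\CL^\s,a^s\eta^s)$ for a \emph{special} test configuration, so $h^g(X,\D)=\inf\BH^g$ over special test configurations. Finally, by \cite[Theorem 4.27]{Xu24}, special test configurations of $(X,\D)$ correspond to special divisorial valuations; and for a special divisorial valuation $v$, since it is the unique lc place of some $\IQ$-complement $(X,\D+\Gamma)$ with $\Gamma\in|{-}(K_X+\D)|_\IQ=|L|_\IQ$, Lemma \ref{Lemma: mu=A} gives $\mu(\CF_v)=A_{X,\D}(v)$, hence $\BH^g(\CF_v)=\tbeta^g(v)$ (Remark \ref{Remark: H^g = beta^g}). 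Stringing these equalities together yields $h^g(X,\D)=\inf_v\BH^g(\CF_v)=\inf_v\tbeta^g(v)$ over special divisorial valuations $v$, as claimed.

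I expect the main obstacle to be the first reduction step, namely justifying that the infimum of $\BH^g$ over \emph{all} linearly bounded filtrations equals the infimum over test-configuration filtrations. One must check that $\BH^g$ is continuous (or at least lower semicontinuous from the right) with respect to an appropriate topology on filtrations — say, convergence of the associated DH measures together with convergence of $\mu=\BL$ — and that an arbitrary linearly bounded filtration admits test-configuration approximations with $\mu$ and $\DH$ converging; the finite generation subtleties here are exactly what makes this nontrivial, and one likely appeals to \cite[Theorem 3.4]{HL20}-type arguments or the approximation lemmas in \cite{BLXZ23}. Once that reduction is in hand, the rest is a routine concatenation: apply the preceding theorem to pass to special test configurations, invoke \cite[Theorem 4.27]{Xu24} to translate into special divisorial valuations, and use Lemma \ref{Lemma: mu=A} to identify $\BH^g(\CF_v)$ with $\tbeta^g(v)$ on such valuations.
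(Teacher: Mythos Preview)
Your proposal is correct and follows the same route as the paper: reduce from arbitrary linearly bounded filtrations to test-configuration filtrations by approximation, apply the preceding theorem to pass to special test configurations, invoke \cite[Theorem 4.27]{Xu24} to translate into special divisorial valuations, and use Lemma \ref{Lemma: mu=A} (via Remark \ref{Remark: H^g = beta^g}) to identify $\BH^g(\CF_v)$ with $\tbeta^g(v)$ on such valuations. The paper leaves the approximation step entirely implicit---it simply asserts the corollary as a consequence of the preceding theorem and Xu's correspondence, and records only the one-line justification of the second equality---whereas you correctly flag that reduction as the one place where real work (continuity of $\mu$ and $\DH$ under finitely-generated truncation, as in \cite{BHJ17,BLXZ23,HL20}) is being silently imported.
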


The second equality follows easily from Remark \ref{Remark: H^g = beta^g}.

\subsection{Existence of $\BH^g$-minimizer}

We have shown that $\BH^g$ admits at most one valuative minimizer in Corollary \ref{Corollary: gHT: uniqueness of minimizer}. In the following theorem, we prove the existence. 

\begin{thm}
\label{Theorem: gHT: Existence} 
There exists a quasi-monomial valuation $v_0$ such that 
\begin{eqnarray*}  
h^g(X,\D)
\,\,\,=\,\,\, 
\BH^g(\CF_{v_0}) 
\,\,\,=\,\,\, 
\tbeta^g(v_0). 
\end{eqnarray*}
\end{thm}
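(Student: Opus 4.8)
The plan is to combine a compactness argument over the space of special divisorial valuations with the convexity results already established, mirroring the strategy of \cite[Theorem 3.4]{HL20} but carried out purely on the algebraic side. First I would use Corollary \ref{Corollary: special valuative criterion} to reduce to a minimizing sequence of special divisorial valuations $v_i$, so that $\tbeta^g(v_i)\to h^g(X,\D)$. Each $v_i$ comes from a special test configuration, hence is an lc place of a $\IQ$-complement $(X,\D+\Gamma_i)$; by the boundedness of complements \cite{BCHM10, HMX14} there is a fixed integer $N$ and a fixed log resolution (up to finitely many choices) on which all these complements are accounted for. After passing to a subsequence, I would arrange all the $v_i$ to lie in a single quasi-monomial simplicial cone $\sigma=\QM_\eta(Y,E)$ attached to one log smooth model $(Y,E)\to(X,\D)$.

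Next, on this fixed cone $\sigma$, Theorem \ref{Theorem: beta^g on simplicial cone} tells us that $v\mapsto \BH^g(\CF_v)$ is strictly convex and continuous with a unique minimizer. The key point is to control where the minimizing sequence goes: since $\BH^g(a\CF_v)\to+\infty$ as $a\to\infty$ (used in the proof of Theorem \ref{Theorem: beta^g on simplicial cone}), and since $\BH^g$ is bounded below by $h^g(X,\D)$, the sequence $v_i$ must stay in a bounded region of $\sigma$, after suitable normalization (e.g.\ fixing $A_{X,\D}(v_i)=1$, which is harmless because $\BH^g$ and $\tbeta^g$ are unchanged under shifting and the relevant homogeneity). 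By continuity of $\BH^g$ on $\sigma$ and compactness, a subsequence converges to some $v_0\in\sigma$ with $\BH^g(\CF_{v_0})=h^g(X,\D)$. Then I would invoke Remark \ref{Remark: H^g = beta^g} together with the fact that $v_0$, being a limit inside the cone of lc places, remains an lc place of a $\IQ$-complement (this is where Theorem \ref{Theorem: weakly special valuations}, part (a)$\Leftrightarrow$(b), applies: $\mu(\CF_{v_0})=A_{X,\D}(v_0)$ passes to the limit since $\mu(\CF_{v_i})=A_{X,\D}(v_i)$ and both sides are continuous/affine on $\sigma$), to conclude $\BH^g(\CF_{v_0})=\tbeta^g(v_0)$, giving the desired equalities.

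The main obstacle I anticipate is the reduction to a single quasi-monomial cone: a priori the special divisorial valuations $v_i$ in the minimizing sequence could involve log resolutions of unbounded complexity, so one genuinely needs the boundedness of $\IQ$-complements for log Fano pairs (from \cite{BCHM10} and the ACC for lc thresholds \cite{HMX14}, as already exploited in the proof of Theorem \ref{Theorem: weakly special valuations}) to force them onto finitely many models, and then a pigeonhole/subsequence argument to land in one cone. A secondary subtlety is ensuring the limit $v_0$ is not the zero valuation and that the normalization is compatible with lower semicontinuity of $A_{X,\D}$ and continuity of $G_{\CF_v}$; both are handled by the estimates in Theorem \ref{Theorem: beta^g on simplicial cone} showing properness of $\BH^g$ along rays. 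Finally, uniqueness of $v_0$ as a global minimizer is not needed here — it follows separately from Corollary \ref{Corollary: gHT: uniqueness of minimizer} — but its finite generation and speciality will be upgraded afterwards using Lemma \ref{Lemma: gHT: H-minimizer v_0 and delta^(g,v_0)} and Theorem \ref{Theorem: special valuation, higher rank f.g.}.
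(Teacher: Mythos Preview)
Your overall strategy is close to the paper's, but there is a genuine gap at the step where you reduce to a single quasi-monomial cone. Boundedness of complements (which is Birkar's theorem \cite{Bir19}, not \cite{BCHM10,HMX14}) gives a fixed integer $N$ so that every $\IQ$-complement is an $N$-complement; these are parametrized by a locally closed subset $Z \subseteq \IP(H^0(X,NL))$, and after stratifying $Z$ into finitely many smooth pieces $Z_j$ one obtains on each piece a fiberwise log resolution $(Y_{Z_j'}, E_{Z_j'}) \to (X \times Z_j', \D \times Z_j' + D_{Z_j'})$. The crucial point you miss is that this yields finitely many \emph{families} of quasi-monomial cones, not finitely many cones in $\Val_X$: for each $z \in Z_j'$ the cone $\QM(Y_z, E_z) \subseteq \Val_X$ genuinely depends on $z$ (for instance, if the complements are distinct smooth hypersurfaces their divisorial valuations are all different). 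So pigeonholing your minimizing sequence into one stratum does not place the $v_i$ in a single cone $\sigma$, and the compactness argument on $\sigma$ cannot proceed as written.

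The paper circumvents this by a different mechanism: rather than taking a limit of valuations, it shows that the function $z \mapsto b_z := \inf_{v \in \LC(X,\D+D_z)} \tbeta^g(v)$ is \emph{constant} on each stratum $Z_j$. This uses invariance of plurigenera \cite{HMX13} (via \cite[Theorem 4.2]{BLX19}) to prove that $\DH_{F_z}$ is independent of $z \in Z_j'$ for each prime divisor $F$ over the family, and hence so is $\tbeta^g(v_z)$ for each $v \in \QM(Y_{Z_j'}, E_{Z_j'})$. Then $h^g(X,\D) = \min_j b_j$ is a finite minimum, and for any $z$ in the optimal stratum the minimizer of $b_z$ on the single cone $\QM(Y_z,E_z)$ (which exists by Theorem \ref{Theorem: beta^g on simplicial cone}) does the job. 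Your continuity argument for $\mu(\CF_{v_0}) = A_{X,\D}(v_0)$ is also unnecessary in this approach: once one works inside $\LC(X,\D+D_z)$ for a single fixed $z$, Lemma \ref{Lemma: mu=A} applies directly to every valuation in that locus.
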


\begin{proof}
The proof is verbatim to \cite[Theorem 4.9]{HL20} with $h(X,\D)$ and $\tbeta$ replaced by $h^g(X,\D)$ and $\tbeta^g$ respectively. We briefly recall the argument. 
By \cite[Theorem A.2]{BLX19} (a variant of boundedness of complements \cite{Bir19}), there exists an integer $N$ depending only on $\dim X$ and the coefficients of $\D$, such that every $\IQ$-complement of $(X,\D)$ is a $N$-complement. 

Recall $L=-(K_X+\D)$ and $R_m = H^0(X,mL)$. Let $W= \IP(R_N)$ and $D$ be the universal $\IQ$-divisor on $X\times W$ parametrizing divisors in $\frac{1}{N}|NL|$. By lower semicontinuity of log canonical thresholds (\cite[Lemma 1,42]{Xu24}), the subset $Z=\{w\in W\mid \lct(X, \D+D_w) =1\} \seq W$ is locally closed. For any $z\in Z$, we denote by 
\begin{eqnarray}
\label{Eqnarray: b_z}
b_z := \mathop{\inf}_{v\in \LC(X, \D+D_z)} \tbeta^g(v). 
\end{eqnarray}
Choose a log resolution $(Y_z,E_z) \to (X,\D+D_z)$. Then $\LC(X, \D+D_z)\seq \QM(Y,E)$. Hence the infimum in (\ref{Eqnarray: b_z}) is a minimum by Theorem \ref{Theorem: beta^g on simplicial cone}, that is, $b_z = \tbeta^g(v_z)$ for some $v_z\in \LC(X,\D+D_z)$. 

Since $(X_Z, \D_Z+D_Z) := (X\times Z, \D\times Z+ D|_{X\times Z}) \to Z$ is a $\IQ$-Gorenstein family of pairs, we can divide $Z$ into a disjoint union of finitely many locally closed subsets $Z= \sqcup_j Z_j$ such that, for each $j$, $Z_j$ is smooth, and there exists an \'etale cover $Z_j'\to Z_j$ such that the base change $(X_{Z_j'}, \D_{Z_j'}+D_{Z_j'})$ admits a fiberwise log resolution $(Y_{Z_j'},E_{Z_j'})$ over $Z_j'$. For any prime divisor $F \in \QM(Y_{Z_j'},E_{Z_j'})$, by the proof of \cite[Theorem 4.2]{BLX19} (using invariance of plurigenera \cite{HMX13}), we see that $\DH_{F_z}$ is constant for $z\in Z_j'$. Hence for any $v \in \QM(Y_{Z_j'},E_{Z_j'})$, the DH measure $\DH_{v_z}$ is constant for $z\in Z_j'$. On the other hand, $A_{X,\D}(v_z)$ is constant for $z\in Z_j'$ since $(Y_{Z_j'},E_{Z_j'})$ is simple normal crossing over $Z_j'$. We conclude that $b_z$ is constant for $z\in Z_j'$, and we denote this number by $b_j$. 

Finally, by Corollary \ref{Corollary: special valuative criterion} and by our choice of $N$ and $Z$, we have $h^g(X,\D) = \inf_{z\in Z} b_z = \min_{j} b_{j}$. Let $j_0$ be a minimizer. Then for any $z\in Z_{j_0}'$, the minimizer $v_z$ of $b_z$ in (\ref{Eqnarray: b_z}) is the desired quasi-monomial valuation minimizing $h^g(X,\D)$.  
\end{proof}

\begin{thm}
\label{Theorem: G-invariant minimizer}
If $(X,\D)$ admits a connected reductive group $\IG$-action, then the $\BH^g$-minimizer $v_0$ is $\IG$-invariant.
\end{thm}

Recall that a valuation $v$ is called $\IG$-invariant if for any $\phi\in \IG$, viewing as an automorphism $\phi: X\to X$, we have $\phi^*v = v$, where $(\phi^*v)(f) := v(f\circ\phi)$ for any rational function $f$ on $X$.  
The above theorem is a consiquence of the uniqueness of $\BH^g$-minimizer Corollary \ref{Corollary: gHT: uniqueness of minimizer}. We give another proof which does not rely on the convexity of $\BH^g$. 

\begin{proof}\rm 
This follows from the similar argument of \cite[Theorem 4.63 (i)]{Xu24}. We use the same notions as in the above proof. By Remark \ref{Remark: G-equivariant MMP} and Corollary \ref{Corollary: special valuative criterion}, we see that $h^g(X,\D)$ is approximated by a series of $\IG$-invariant special divisorial valuations $E_m$, which are log canonical places of $N$-complements. Hence $E_m$ is a log canonical place of $(X,\D+\Bs|M_m|^{\frac{1}{N}})$, where 
\begin{eqnarray*}  
M_m = \CF_{E_m}^{NA_{X,\D}(E)}R_N \seq R_N, 
\end{eqnarray*}
is a $\IG$-invarant sublinear series. Let $W$ be the subvariety of $\cup_i \gr(i,R_N)$ parametrizing $\IG$-invariant sublinear series of $R_N$, and $M\to W$ be the corresponding universal family. Also by lower semicontinuity of log canonical thresholds, we have locally closed subset $Z=\{w\in W\mid \lct(X, \D+\Bs|M_w|^{\frac{1}{N}}) =1\} \seq W$. For any $z\in Z$, we define 
\begin{eqnarray}
\label{Eqnarray: b_z, equivariant cases}
b_z := \mathop{\inf}_{v\in \LC^\IG(X, \D+\Bs|M_w|^{\frac{1}{N}})} \tbeta^g(v),  
\end{eqnarray}
where $\LC^\IG(X, \D+\Bs|M_w|^{\frac{1}{N}}) \seq \LC(X, \D+\Bs|M_w|^{\frac{1}{N}})$ consists of $\IG$-invariant valuations. Also by Theorem \ref{Theorem: beta^g on simplicial cone}, we have $b_z = \tbeta^g(v_z)$ for some $v_z\in \LC^\IG(X, \D+\Bs|M_w|^{\frac{1}{N}})$. Now the same argument of the last two paragraph of the above proof shows that $h^g(X,\D) = b_z$ for some $z\in Z$, which is minimized by the $\IG$-invariant quasi-monomial valuation $v_z$. 
\end{proof}

\subsection{Finite generation and weighted K-stability}

\begin{thm}
\label{Theorem: gHT: Finite generation} 
The minimizer $v_0$ of $\BH^g$ is special. 
\end{thm}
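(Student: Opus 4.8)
The plan is to follow the strategy of \cite[Theorem 5.2]{HL20} (and its weighted analog in \cite{BLXZ23}), combining the existence result of Theorem \ref{Theorem: gHT: Existence} with the higher rank finite generation machinery of Theorem \ref{Theorem: special valuation, higher rank f.g.}. By Theorem \ref{Theorem: gHT: Existence} we already have a quasi-monomial valuation $v_0$ with $h^g(X,\D) = \BH^g(\CF_{v_0}) = \tbeta^g(v_0)$, and by Corollary \ref{Corollary: gHT: uniqueness of minimizer} it is the unique valuative minimizer. To prove $v_0$ is special it suffices, by Theorem \ref{Theorem: special valuation, higher rank f.g.}(a), to show that $\gr_{v_0}R$ is finitely generated and that the central fiber $(X_{v_0},\D_{v_0})$ of the induced degeneration is klt.

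First I would set up the approximation. By Corollary \ref{Corollary: special valuative criterion}, $h^g(X,\D) = \inf_v \tbeta^g(v)$ where $v$ runs over special divisorial valuations; thus there is a sequence of special divisorial valuations $v_i$, each corresponding (via \cite[Theorem 4.27]{Xu24}) to a special test configuration, with $\tbeta^g(v_i)\to h^g(X,\D)$. Since $\tbeta^g$ agrees with $\BH^g(\CF_{v_i})$ on these valuations, and $\BH^g$ is convex along geodesics (Theorem \ref{Theorem: gHT: Convexity}) with a unique valuative minimizer, a standard argument shows the $\CF_{v_i}$ converge to $\CF_{v_0}$ in the $L^1$-distance $d_1$, hence $v_i\to v_0$ in an appropriate sense and $v_0$ lies in $\QM(Y,E)$ for a fixed log smooth model $(Y,E)$ (one extracts this from the proof of Theorem \ref{Theorem: gHT: Existence}, which already places $v_0$ in a fixed simplicial cone $\QM(Y_{Z_{j_0}'},E_{Z_{j_0}'})$ coming from a complement $\Gamma$ with $(X,\D+\Gamma)$ lc). So $v_0$ is an lc place of a $\IQ$-complement, i.e., $\mu(\CF_{v_0}) = A_{X,\D}(v_0)$ by Lemma \ref{Lemma: mu=A}, and $v_0$ is at least weakly special by Theorem \ref{Theorem: weakly special valuations}.

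Next I would upgrade weak speciality to speciality using the weighted delta invariant. By Lemma \ref{Lemma: gHT: H-minimizer v_0 and delta^(g,v_0)}, since $\CF_0 := \CF_{v_0}$ minimizes $\BH^g$ and $\mu_0 = A_{X,\D}(v_0)$ after shifting, we get $\delta^{g',\CF_0}(X,\D;L) = \frac{A_{X,\D}(v_0)}{S^{g',\CF_0}(L;v_0)} = 1$, i.e., $v_0$ itself computes the weighted delta invariant, which equals $1$. Then I would invoke the weighted analog of the Li--Xu theory: following \cite[Theorem 5.2]{BLXZ23} (building on \cite{LXZ22,XZ22}), the valuation $v_0$ computing $\delta^{g',\CF_0}(X,\D;L)=1$ has finitely generated associated graded ring $\gr_{v_0}R$, and the degeneration it induces has klt (in fact log Fano, after the multistep degeneration) central fiber. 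More concretely, one passes to the simplicial cone $\sigma = \QM(Y,E)\ni v_0$, considers the finitely many rational rays, uses finite generation of the Cox-type ring of the model $(Y,E)$ together with the fact that $v_0$ computes a $\delta$-type invariant equal to $1$, and applies the special degeneration theorem; the klt property of the central fiber is then part of the output, yielding condition (a) of Theorem \ref{Theorem: special valuation, higher rank f.g.}.

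The main obstacle I expect is the finite generation step itself: establishing that the $\BH^g$-minimizer $v_0$, which a priori is only quasi-monomial of possibly irrational rank, has a finitely generated associated graded ring. This is the analog of the hardest input in \cite{HL20,BLXZ23}, relying ultimately on \cite{LXZ22} (finite generation for minimizers of normalized volume / $\delta$-invariants) together with the rather delicate reduction, via the weighted delta invariant $\delta^{g',\CF_0}$ and Lemma \ref{Lemma: gHT: H-minimizer v_0 and delta^(g,v_0)}, of the $\BH^g$-minimization problem to a $\delta$-minimization problem to which that machinery applies. One subtlety specific to the generalized setting is that the weight $g'$ replaces the exponential weight, so one must check that the constructions of \cite{BLXZ23} (the measures $\DH_{\CF_0,\CF}$, the functionals $S^{g',\CF_0}$, and the finite generation criterion) go through with $g'$ a general positive smooth function — but the growth/continuity hypotheses on $g$ in (\ref{Eqnarray: function g}) are exactly what make this work, and the convexity arguments already carried out in Theorem \ref{Theorem: gHT: Convexity} and Lemma \ref{Lemma: gHT: H-minimizer v_0 and delta^(g,v_0)} are the key structural inputs. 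Once finite generation and the klt central fiber are in hand, Theorem \ref{Theorem: special valuation, higher rank f.g.} immediately gives that $v_0$ is special, completing the proof.
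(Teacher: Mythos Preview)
Your proposal is correct and the core approach matches the paper's: apply Lemma~\ref{Lemma: gHT: H-minimizer v_0 and delta^(g,v_0)} to see that $v_0$ computes $\delta^{g',\CF_{v_0}}(X,\D)=1$, then invoke the finite generation theorem of \cite{BLXZ23} (the paper cites Theorem~5.4 there) to conclude $v_0$ is special. Your second paragraph (approximation by special divisorial valuations and the weak-speciality detour via Theorem~\ref{Theorem: weakly special valuations}) is unnecessary, since Theorem~\ref{Theorem: gHT: Existence} already places $v_0$ as a quasi-monomial lc place and Lemma~\ref{Lemma: gHT: H-minimizer v_0 and delta^(g,v_0)} applies directly; the paper's proof is accordingly two lines.
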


\begin{proof}
Choose $\CF_0=\CF_{v_0}$ in Lemma \ref{Lemma: gHT: H-minimizer v_0 and delta^(g,v_0)}. From the construction of $v_0$, we know that it is a log canonical place of some $N$-complements. So we have $\mu(\CF_{v_0}) = A_{X,\D}(v_0)$ by Lemma 2.8. In other words, $v_0$ minimizes $\lct(I_\bu^{\mu(\CF_{v_0})}(\CF_{v_0}))$. By Lemma \ref{Lemma: gHT: H-minimizer v_0 and delta^(g,v_0)}, we see that $v_0$ is a minimizer of $\delta^{g',v_0}(X,\D)=1$. Hence it is a special valuation by \cite[Theorem 5.4]{BLXZ23} (choosing $\xi=0$ and $v=v_0$). 
\end{proof}



By definition of special valuations Theorem \ref{Theorem: special valuation, higher rank f.g.}, we see that the $\BH^g$-minimizer $v_0$ induces a (multistep) special degeneration $(X_0,\D_0,\xi_0)$ of $(X,\D)$ with klt central fiber. We call $(X_0,\D_0,\xi_0)$ the {\it $g$-optimal degeneration} of $(X,\D)$. 
Next we study this degeneration of $(X,\D)$. We first recall some notions in the weighted K-stability theory. 

Assume that $(X,\D)$ admits a torus $\IT=\IG_m^r$-action. Then the anti-canonical ring $R_\bu=R(X,\D)=\oplus_{m\in l_0\IN}R_m$ admits a canonical weight decomposition $R_m=\oplus_{\alpha \in M} R_{m,\alpha}$, where $M= \Hom(\IT,\IG_m)\cong \IZ^r$ is the weight lattice. Let $N=M^\vee$ be the coweight lattice and $N_\IR = N\otimes_\IZ \IR$. A filtration $\CF$ is called $\IT$-{\it invariant} if $\CF^\lam R_m = \oplus_\alpha \CF^\lam R_{m,\alpha}$. 

For any $\xi\in N_\IR$ and $\IT$-invariant filtration $\CF$, the $\xi$-{\it twist} of $\CF$ is defined by 
\begin{eqnarray*}  
\CF_\xi^\lam R_{m} = \oplus_{\alpha\in M} (\CF_\xi^{\lam}R_m)_\alpha, \quad 
(\CF_\xi^{\lam}R_m)_\alpha := \CF^{\lam-\la\alpha,\xi\ra}R_{m,\alpha}. 
\end{eqnarray*}
We will simple denote the filtration $\CF_{\triv, \xi}^\lam R_m= \oplus_{\la\alpha,\xi\ra\ge \lam} R_{m,\alpha}$ by $\xi$, then  
\begin{eqnarray*}  
\mu(\xi) = \mu(\CF_{\triv, \xi}) = \mu(\CF_\triv) =0, 
\end{eqnarray*}
by the following lemma. 

\begin{lem}\cite[Lemma 6.24]{Xu24}
For any $\IT$-invariant linearly bounded filtration $\CF$ on $R$, and any $\xi\in N_\IR$, we have $\mu(\CF_\xi) = \mu(\CF)$. 
\end{lem}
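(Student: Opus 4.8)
The plan is to identify $\mu$ with the non-Archimedean $L$-functional and exploit that $\BL$ sees only the underlying degenerating family, not the linearization of the $\IG_m$-action. Since $\mu(\CF)=\BL(\CF)$ for every linearly bounded filtration, it suffices to prove $\BL(\CF_\xi)=\BL(\CF)$. A direct computation with the bigrading gives $(k\CF)_{k\xi}=k(\CF_\xi)$ for every $k\in\IZ_{>0}$, and since $\mu(k\CF)=k\mu(\CF)$ by the rescaling lemma, the identity for $(\CF,\xi)$ is equivalent to the one for $(k\CF,k\xi)$. Combined with the density of $N_\IQ$ in $N_\IR$ and the continuity of $\xi\mapsto \mu(\CF_\xi)$ (the twist moves $\lam_\max$, $\lam_\min$ and the measure $\DH_{\CF}$ in a controlled, piecewise-affine fashion in $\xi$), this reduces us to $\xi\in N$.

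Next I would reduce to the finitely generated case. For a $\IT$-invariant linearly bounded $\CF$, the finitely generated truncations $\CF_{(k)}$ (the filtration generated by $\{\CF^\lam R_m:m\le k\}$) are again $\IT$-invariant, they satisfy $(\CF_{(k)})_\xi=(\CF_\xi)_{(k)}$, and $\mu(\CF_{(k)})\nearrow \mu(\CF)$ by continuity of $\BL$ along this monotone approximation (Boucksom--Jonsson). Hence it is enough to treat $\gr_\CF R$ finitely generated. In that case the Rees construction yields a test configuration $(\CX_\CF,\D_{\CX_\CF};\CL,\eta)$ carrying a fibrewise $\IT$-action commuting with $\eta$, because each $\CF^\lam R_m$ is $\IT$-stable. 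Unwinding $\Rees_{\CF_\xi}R=\bigoplus_{m,\alpha}\bigoplus_\mu t^{-\mu-\la\alpha,\xi\ra}\CF^\mu R_{m,\alpha}$ shows this is the \emph{same} graded $\Ik[t]$-algebra as $\Rees_\CF R$, with only the auxiliary $\IG_m$-grading shifted by $\la\alpha,\xi\ra$ on the weight-$\alpha$ piece; equivalently, $(\CX_{\CF_\xi},\D_{\CX_{\CF_\xi}};\CL)$ coincides with $(\CX_\CF,\D_{\CX_\CF};\CL)$ as a polarized family over $\IA^1$, and $\eta$ is merely replaced by $\eta\cdot\lambda_\xi$ for the one-parameter subgroup $\lambda_\xi:\IG_m\to\IT$ given by $\xi$ (which acts vertically over $\IA^1$, so $\pi$ stays equivariant with the standard action on $\IA^1$). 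In particular $\CX_0=\pi^{-1}(0)$, $\D_{\CX}$ and the $\IQ$-Cartier divisor $\CL$ are unchanged.

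To conclude, recall $\mu=\BL$ and $\BL(\CX,\D_\CX;\CL,\eta)=\lct(X,\D_\CX+\CD;\CX_0)-1$ where $\CD\sim_\IQ-(K_\CX+\D_\CX)-\CL$ is supported on $\CX_0$: this quantity depends only on the divisors $\CX_0$, $\D_\CX$ and the $\IQ$-linear equivalence class of $\CL$, not on the $\IG_m$-action. By the previous step these data agree for $\CF$ and $\CF_\xi$, so $\mu(\CF_\xi)=\BL(\CF_\xi)=\BL(\CF)=\mu(\CF)$. (An alternative route, if one prefers to avoid the finite-generation reduction, is to argue directly with the base ideal sequences: $\lct$ of $\IT$-invariant graded ideal sequences is computed by $\IT$-invariant valuations, and for such valuations one checks $v(I^{(t)}_\bu(\CF_\xi))=v(I^{(t)}_\bu(\CF))$ by tracking the weight-space contributions — this works in the normalization used here but is more bookkeeping-heavy.)

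The genuinely technical points are the two continuity inputs above (continuity of $\xi\mapsto\mu(\CF_\xi)$ on $N_\IR$ and of $\mu$ along the truncations $\CF_{(k)}$) and the bigraded computation identifying the $\xi$-twist of a filtration with the $\lambda_\xi$-twist of the associated test configuration; once these are in place, the geometric heart of the statement — that $\BL$ ignores the $\IG_m$-linearization — is immediate.
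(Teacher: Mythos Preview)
The paper does not prove this lemma; it is quoted verbatim from \cite[Lemma 6.24]{Xu24} with no argument given, so there is no in-paper proof to compare against.

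As for your proposal on its own: the geometric core is correct --- once one is in the finitely generated setting, the $\xi$-twist changes only the $\IG_m$-linearization on the same polarized family $(\CX_\CF,\D_{\CX_\CF};\CL)\to\IA^1$, and $\BL$ manifestly ignores that linearization. The weak points are precisely the two ``technical inputs'' you flag. Continuity of $\xi\mapsto\mu(\CF_\xi)$ is asserted with only a heuristic (``piecewise-affine fashion''); this is not obvious from the definition of $\mu$ via $\lct$ of base ideal sequences, and would itself require an argument roughly as strong as the lemma. Likewise, $\mu(\CF_{(k)})\nearrow\mu(\CF)$ along finitely generated truncations is not a standard continuity statement for $\BL$; the known approximation results (e.g.\ in \cite{BJ20}) concern energy-type functionals and do not immediately give this for the log canonical slope. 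So as written, the reduction steps are gaps rather than routine verifications.

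Your parenthetical ``alternative route'' is in fact closer to how this is usually proved and is more direct: one shows that for a $\IT$-invariant valuation $v$ the twist shifts the function $t\mapsto v(I^{(t)}_\bu)$ by a constant depending on $v$ and $\xi$, while $A_{X,\D}(v)$ is unchanged, and that $\IT$-invariant valuations suffice to compute the relevant $\lct$'s. Note however that your claim $v(I^{(t)}_\bu(\CF_\xi))=v(I^{(t)}_\bu(\CF))$ is not literally correct --- the base ideals differ --- so that sentence would need to be reformulated in terms of the correct shift before the argument goes through.
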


Recall that $g':\IR\to \IR_{>0}$ is the first order derivative of $g$. Then for any $\xi\in N_\IR$, we may  define the $(g',\xi)$-weighted Ding invariants of $(X,\D)$. 

\begin{defi}\rm
\label{Definition: weighted Ding stability}
For any $\IT$-invariant linearly bounded filtration $\CF$ on $R$, we define the {\it $(g',\xi)$-weighted Ding invariant} by 
\begin{eqnarray*}
\BD^{g',\xi}(\CF) 
\,\,\,=\,\,\, \BD^{g',\xi}_{X,\D}(\CF) 
\,\,\,:=\,\,\,
\mu_{X,\D}(\CF) - S^{g',\xi}(\CF). 
\end{eqnarray*}
The log Fano pair $(X,\D)$ is called {\it $\IT$-equivariantly $(g',\xi)$-weighted Ding-semistable} if $\BD^{g',\xi}(\CF)\ge 0$ for any $\IT$-invariant linearly bounded filtration $\CF$ on $R$. If moreover, for any $\IT$-equivariant normal test configuration $(\CX,\D_\CX;\CL)$ of $(X,\D)$, $\BD^{g',\xi}(\CX,\D_\CX;\CL)= 0$ implies that $(\CX,\D_\CX;\CL)$ is a product TC, then $(X,\D)$ is called {\it $\IT$-equivariantly $(g',\xi)$-weighted Ding-polystable}.  

The log Fano triple $(X,\D,\xi)$ is called $g'$-{\it weighted K-(semi/poly)stable} if $(X,\D)$ is $\IT$-equivariantly $(g',\xi)$-weighted Ding-(semi/poly)stable for some $\IT$-action. By \cite[Remark 5.10]{BLXZ23}, the definition is independent of the choice of the $\IT$-action. 
\end{defi}

\begin{thm}
\label{Theorem: gHT: Weighted K-stability}
Let $v_0$ be a quasi-monomial valuation over $X$ with finitely generated associated graded ring $\gr_{v_0}R$, which induces a klt (multistep) special degeneration $(X_0,\D_{0},\xi_0)$. Then $v_0$ minimizes $\BH^g$ if and only if $(X_0,\D_{0},\xi_0)$ is $g'$-weighted K-semistable. 
\end{thm}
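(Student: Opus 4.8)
The plan is to reduce Theorem \ref{Theorem: gHT: Weighted K-stability} to the combination of Lemma \ref{Lemma: gHT: H-minimizer v_0 and delta^(g,v_0)} and the weighted K-stability theory of \cite{BLXZ23}. Let $\CX_0$ denote the central fiber of the multistep degeneration induced by $v_0$, with its induced $\IT$-action, and let $\xi_0\in N_\IR$ be the distinguished vector so that $v_0 = \mathrm{wt}_{\xi_0}\circ (\text{degeneration map})$ in the usual sense; since $\gr_{v_0}R$ is finitely generated and the central fiber is klt, this is a genuine special (multistep) degeneration and all the objects $\DH_{v_0}$, $S^{g',v_0}$, $\mu(\CF_{v_0})=A_{X,\D}(v_0)$ (after the normalizing shift) are identified with the corresponding invariants on $(\CX_0,\D_{\CX,0},\xi_0)$. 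The first step is to record these identifications: the filtration $\CF_{v_0}$ degenerates to the weight filtration of $\xi_0$ on $R(\CX_0,\D_{\CX,0})$, so that $\DH_{\CF_{v_0}}$ on $X$ equals $\DH_{\xi_0}$ on $\CX_0$, and for any $\IT$-invariant filtration $\CG$ on $R(\CX_0,\D_{\CX,0})$ we have $S^{g',v_0}(L;\cdot)$ matching $S^{g',\xi_0}$ in Definition \ref{Definition: weighted Ding stability}, using the degeneration-to-normal-cone/Rees trick from \cite[§3--4]{BLXZ23}.

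Next I would prove the ``only if'' direction. Suppose $v_0$ minimizes $\BH^g$. By Lemma \ref{Lemma: gHT: H-minimizer v_0 and delta^(g,v_0)} (with $\CF_0 = \CF_{v_0}$, which minimizes $\BH^g$ as well by Remark \ref{Remark: H^g = beta^g} and Corollary \ref{Corollary: special valuative criterion}), we get $\delta^{g',v_0}(X,\D;L) = 1$, i.e. $A_{X,\D}(v)\ge S^{g',v_0}(L;v)$ for every valuation $v$ over $X$. One then transports this inequality through the degeneration: every $\IT$-invariant linearly bounded filtration $\CG$ on $R(\CX_0,\D_{\CX,0})$ lifts (or is approximated by filtrations that lift) to a filtration on $R(X;L)$ whose associated $A$-value and $S^{g',v_0}$-value dominate those of $\CG$, exactly as in the proof of \cite[Theorem 5.3]{HL20} or \cite[Theorem 5.1]{BLXZ23}; combined with $\mu = \BL$ (the lemma of \cite{BLXZ23} cited in the excerpt), this yields $\BD^{g',\xi_0}(\CG) = \mu(\CG) - S^{g',\xi_0}(\CG)\ge 0$, so $(\CX_0,\D_{\CX,0},\xi_0)$ is $g'$-weighted K-semistable.

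For the ``if'' direction, assume $(\CX_0,\D_{\CX,0},\xi_0)$ is $g'$-weighted K-semistable, i.e. $A(v)\ge S^{g',\xi_0}(v)$ for all $\IT$-invariant $v$ over $\CX_0$, equivalently (by the independence of the $\IT$-choice, \cite[Remark 5.10]{BLXZ23}, and a $\IT$-equivariant approximation) for the invariant $\delta^{g',\xi_0}(\CX_0,\D_{\CX,0};L_0) = 1$. Pulling this back along the degeneration gives $\delta^{g',v_0}(X,\D;L) = 1$ and the normalization $\mu_0 = A_{X,\D}(v_0)$, which is precisely the ``if'' hypothesis of Lemma \ref{Lemma: gHT: H-minimizer v_0 and delta^(g,v_0)} once we also check $\BH^g(\CF_{v_0}) = \tbeta^g(v_0)$ — but this last equality holds because $v_0$ is special (Theorem \ref{Theorem: gHT: Finite generation}), hence an lc place of a $\IQ$-complement, so Lemma \ref{Lemma: mu=A} and Remark \ref{Remark: H^g = beta^g} apply. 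The lemma then gives that $\CF_{v_0}$ minimizes $\BH^g$, and by the uniqueness and valuative criterion (Corollary \ref{Corollary: gHT: uniqueness of minimizer}, Corollary \ref{Corollary: special valuative criterion}) this forces $v_0$ to be the $\BH^g$-minimizer.

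The main obstacle I expect is the bookkeeping in transferring the inequality $A\ge S^{g',(\cdot)}$ between $X$ and its degeneration $\CX_0$ — in particular, showing that minimizing over all $\IT$-invariant valuations on $\CX_0$ is the same as minimizing the twisted quantity $S^{g',v_0}$ over valuations on $X$, and handling quasi-monomial (rather than divisorial) $v_0$ and the multistep nature of the degeneration. This is exactly the content of the higher-rank finite generation package (Theorem \ref{Theorem: special valuation, higher rank f.g.}) together with the matching-of-$S$-invariants argument of \cite[§4--5]{BLXZ23}, so the proof should largely be a citation-driven reduction; the only genuinely new input over the $g(x)=e^x$ case is that the relevant weight is $g'$ rather than $g$, which is already encoded correctly in Definitions \ref{Definition: H^g invariant} and \ref{Definition: weighted Ding stability} and Lemma \ref{Lemma: gHT: H-minimizer v_0 and delta^(g,v_0)}.
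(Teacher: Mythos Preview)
Your route through Lemma \ref{Lemma: gHT: H-minimizer v_0 and delta^(g,v_0)} is genuinely different from the paper's proof, which never invokes that lemma. The paper argues both directions directly at the level of $\BH^g$. For ``if'' it takes any filtration $\CF$ on $R(X)$, forms the initial term degeneration $\CF'$ on $\gr_{v_0}R$, uses lower semicontinuity of $\mu$ (and preservation of DH measures) to get $\BH^g_{X}(\CF)\ge \BH^g_{W}(\CF')$, and then computes the derivative of $\BH^g_W$ along the geodesic from $\xi_0$ to $\CF'$; this derivative equals $\frac{\Bv^{g'}}{\Bv^g}\,\BD^{g',\xi_0}_{W}(\CF'_{\xi_0})\ge 0$ by the assumed semistability, so $\BH^g_W(\CF')\ge \BH^g_W(\xi_0)=\BH^g_X(v_0)$. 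For ``only if'' the paper assumes a destabilizing special TC on the central fiber, builds the $v_\vep$ family of \cite{LX18,LX19} over $X$, and shows $\frac{d}{d\vep}\big|_{\vep=0}\BH^g_X(v_\vep)<0$, contradicting minimality.

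Your ``transport'' of $\delta=1$ is where the argument is incomplete. In the ``only if'' direction you must pass from $\delta^{g',v_0}(X)=1$ (an inequality $A_X(v)\ge S^{g',v_0}(v)$ for valuations on $X$) to $\BD^{g',\xi_0}(\CG)\ge 0$ for every $\IT$-invariant filtration $\CG$ on $R(\CX_0)$. The initial term map goes the wrong way (it sends filtrations on $X$ to filtrations on $\CX_0$, and lower semicontinuity gives $\mu_X\ge \mu_{\CX_0}$, not $\le$), so one must \emph{lift} $\CG$ from $\CX_0$ to $X$; the only available mechanism for that is precisely the $v_\vep$ construction, and neither \cite[Theorem 5.3]{HL20} nor \cite[Theorem 5.1]{BLXZ23} gives a shortcut---the former is this very theorem for $g=\exp$ and uses $v_\vep$, the latter is the prototype of Lemma \ref{Lemma: gHT: H-minimizer v_0 and delta^(g,v_0)} and says nothing about the central fiber. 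Your ``if'' direction can be salvaged, but not as a ``pull-back of $\delta$'': for any $v$ on $X$ one has $A_X(v)\ge \mu_X(\CF_v)\ge \mu_{\CX_0}(\CF_v')\ge S^{g',\xi_0}(\CF_v')=S^{g',v_0}(v)$, where the third inequality is the Ding form of semistability on $\CX_0$; one must also separately verify $A_X(v_0)=S^{g',v_0}(v_0)$ before invoking Lemma \ref{Lemma: gHT: H-minimizer v_0 and delta^(g,v_0)}. Finally, your appeal to Theorem \ref{Theorem: gHT: Finite generation} to conclude $v_0$ is special is circular (that theorem assumes $v_0$ already minimizes $\BH^g$); speciality of $v_0$ is already part of the hypothesis of Theorem \ref{Theorem: gHT: Weighted K-stability} via Theorem \ref{Theorem: special valuation, higher rank f.g.}.
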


\begin{proof}
We follow the proof of \cite[Theorem 5.3]{HL20}. First assume that $v_0$ minimizes $\BH^g$. Denote by $(W,\D_W,\xi) = (\CX_0,\D_{\CX,0},\xi_0)$ and assume that it is $g'$-weighted K-unstable. Then by a variant of \cite{LX14} (see for example \cite[Theorem 2.43]{HL20}), there exists a special test configuration $(\CW,\D_{\CW},\eta)$ such that 
\begin{eqnarray*}  
\BD^{g',\xi}_{W,\D_W}(\CW,\D_{\CW},\eta) < 0. 
\end{eqnarray*}
We denote by $(Y,\D_Y,\eta)=(\CW_0,\D_{\CW,0},\eta)$, then
\begin{eqnarray*}  
\BD^{g',\xi}_{Y,\D_Y}(\eta)
= \BD^{g',\xi}_{W,\D_W}(\CW,\D_{\CW},\eta) 
< 0. 
\end{eqnarray*}
Then we can construct a series of valuations $\{v_\vep \}_{\vep \in \IR}$ as \cite{LX18} inducing special degenerations of $(X,\D)$ with central fibers $(Y,\D_Y, \xi+\vep\eta)$. 
Then $\BH^g_{X,\D}(v_\vep) = \BH^g_{Y,\D_Y}(\xi+\vep\eta)$ (where $\BH^g(\xi) := \BH^g(\CF_{\triv,\xi}) = \BH^g(\wt_\xi)$ by the shifting invariance of $\BH^g$ and $\CF_{\triv,\xi} = \CF_{\wt_\xi}(-A_{Y,\D_Y}(\wt_\xi))$ by \cite[Proposition 3.8]{Li19}). Since $\mu(\xi')=\mu(\CF_{\triv,\xi'})=0$ for any co-weight vector $\xi'$ on $Y$, we have 
\begin{eqnarray*}  
\BH^g_{Y,\D_Y}(\xi+\vep\eta) 
= \log\big(
\int_\BP g(-\la \alpha, \xi+\vep\eta \ra) \DH_\BP(\dif \alpha) \big). 
\end{eqnarray*}
Hence
\begin{eqnarray*}
\frac{\dif}{\dif \vep}|_{\vep = 0} \,\,
\BH^g_{X,\D}(v_\vep)
&=&
\frac{\int_\BP (-\la \alpha, \eta \ra)\cdot g'(-\la \alpha, \xi\ra) \DH_\BP(\dif \alpha)}{\int_\BP g(-\la \alpha, \xi \ra) \DH_\BP(\dif \alpha)}
\\
&=&
\frac{1}{\Bv^{g}}
\int_\BP (-\la \alpha, \eta \ra)\cdot g'(-\la \alpha, \xi\ra) \DH_\BP(\dif \alpha)
\,\,\,=\,\,\, \frac{\Bv^{g'}}{\Bv^{g}} 
\cdot \BD^{g',\xi}_{Y,\D_Y}(\eta)
\,\,\,<\,\,\, 0, 
\end{eqnarray*}
which contradicts that $v_0$ minimizes $\BH^g_{X,\D}$. 

Conversely, assume that $(W,\D_W,\xi)$ is $g'$-weighted K-semistable. Then for any linearly bounded filtration $\CF$ on $R$. We define its {\it initial term degeneration} $\CF'$ on $\gr_{v_0}R$ by
\begin{eqnarray*}
\CF'^{\lam} \gr_{v_0} R_m 
:= \la \bar{s}_i: s_i \in \CF^\lam R_m \ra, 
\end{eqnarray*}
where $\{s_i\}$ is a basis of $R_m$ which is compatible with both $v_0$ and $\CF$. Hence $\DH_\CF = \DH_{\CF'}$. By lower semicontinuity of log canonical thresholds, we have
$\mu_{X,\D}(\CF) \ge \mu_{W,\D_W}(\CF'). $
Hence 
\begin{eqnarray}
\label{Inequality: H^g ineqality}
    \BH^g_{X,\D}(\CF) 
\ge \BH^g_{W,\D_W}(\CF')
\ge \BH^g_{W,\D_W}(\xi)
=   \BH^g_{X,\D}(v_0), 
\end{eqnarray}
where the second inequality follows from the $g'$-weighted K-semistability of $(W,\D_W,\xi)$. Indeed, since $\BH^g$ is strictly convex along geodesics, it suffices to show that the derivative of $\BH^g_{X,\D}(\CF_t)$ at $t=0$ is non-negative, where $\CF_t$ is the geodesic connecting $\CF_0=\CF_{\triv, \xi}$ and $\CF_1=\CF'$. Note that
\begin{eqnarray*}
\CF^\lam_t R_m 
&=& \sum_{(1-t)\mu + t\nu\ge \lam}
\CF_0^\mu R_m \cap \CF_1^\nu R_m \\
&=& \Big\{
s\in R_m\mid (1-t)\ord_{\CF_0}(s) + t\, \ord_{\CF_1}(s) \ge \lam
\Big\} \\
&=& \bigoplus_{\alpha\in M} \Big\{
s\in R_{m,\alpha}\mid (1-t)\la\alpha,\xi\ra + t\, \ord_{\CF'}(s) \ge \lam
\Big\} \\
&=& \bigoplus_{\alpha\in M} \Big\{
s\in R_{m,\alpha}\mid t\Big(\ord_{\CF'}(s) + \la\alpha,\frac{1-t}{t}\xi\ra \Big)\ge \lam
\Big\} \\
&=& \Big\{
s\in R_m\mid \ord_{t\CF'_{\frac{1-t}{t}\xi}}(s) \ge \lam
\Big\} 
\,\,\, = \,\,\,
(t\CF'_{\frac{1-t}{t}\xi})^\lam R_m. 
\end{eqnarray*}
Hence $\CF_t = t\CF'_{\frac{1-t}{t}\xi}$. Recall that $\mu(\CF)$ is invariant under $\xi$-twist, and linear under rescaling. Hence $\mu(\CF_t) = t\mu(\CF')$. We also have $G_{\CF}(y)=(1-t)\la \alpha,\xi \ra+tG_{\CF'}(y)$ where $y=(\alpha,y')$. Hence 
\begin{eqnarray*}
\BH^g(\CF_t) 
&=& \log\Big(
\int_\BO g(\mu(\CF_t) - G_{\CF_t}(y)) \dif y
\Big) \\
&=& \log\Big(
\int_\BO g\big(-\la \alpha,\xi \ra + t( \mu(\CF') - G_{\CF'}(y)+ \la \alpha,\xi \ra)\big) \dif y
\Big) \\
&=& \log\Big(
\int_\BO g\big(-\la \alpha,\xi \ra + t( \mu(\CF'_\xi) - G_{\CF'_\xi}(y))\big) \dif y
\Big),  \\
\frac{\dif}{\dif t}|_{t=0}\, \BH^g(\CF_t) 
&=& \frac{\int_\BO g'\big(-\la \alpha,\xi \ra \big) \cdot \big(\mu(\CF'_\xi) - G_{\CF'_\xi}(y)\big) \dif y}{\int_\BO g(-\la \alpha,\xi \ra) \dif y} \\
&=& \frac{\Bv^{g'}}{\Bv^{g}} \BD^{g',\xi}_{W,\D_W}(\CF'_\xi)
\,\,\, \ge \,\,\, 0, 
\end{eqnarray*}
where $y=(\alpha, y')$. Hence the second inequality in (\ref{Inequality: H^g ineqality}) holds and the proof is finished. 
\end{proof}

\begin{rmk}\rm
If $(X,\D)$ admits a connected reductive group $\IG$-action, then by Theorem \ref{Theorem: G-invariant minimizer}, the $\BH^g$-minimizer $v_0$ is $\IG$-invariant, hence $\gr_{v_0} R$ admitting the $\IG$-action and inducing a $\IG$-equivariant (multistep) special degeneration. In other word, the $g$-optimal degeneration of $(X,\D)$ is $\IG$-equivariant. 
\end{rmk}

As a corollary, we have the following characterization of $g$-optimal degeneration. 

\begin{cor}
\label{Corollary: stable under g -optimal degeneration}
Let $(X,\D)$ be a log Fano pair admitting a torus $\IG_m^r$-action, and $\xi_0\in N_\IR$. Then the filtration $\CF_{\triv,\xi_0}$ minimizes $\BH^g$ if and only if $(X,\D,\xi_0)$ is $g'$-weighted K-semistable. 
\end{cor}

Now we can finish the proof of the main theorem in this paper. 

\begin{proof}[Proof of Theorem \ref{Theorem: Generalized Tian Conjecture}]
The existence and uniqueness of the minimizer $v_0$ of $\BH^g$ follows from Theorem \ref{Theorem: gHT: Existence} and \ref{Theorem: gHT: Convexity} respectively. The valuation is special by Theorem \ref{Theorem: gHT: Finite generation}. Moreover, the (multistep) special degeneration $(X_0,\D_{0},\xi_0)$ induced by $v_0$ is $g'$-weighted K-semistable by Theorem \ref{Theorem: gHT: Weighted K-stability}.
Finally, $(X_0,\D_{0},\xi_0)$ has a unique $g'$-weighted K-polystable degeneration $(Y,\D_Y,\xi_0)$ by \cite[Theorem 1.3]{HL20}, and $(Y,\D_Y,\xi_0)$ admits a $g'$-soliton by \cite[Theorem 1.3]{BLXZ23} and \cite[Theorem 1.7]{HL23}. 
\end{proof}

\section{Examples}
\label{Section: Examples}

In this section, we give some examples that Question \ref{Question: same g -optimal degenerations?} has positive answer.

\subsection{Weighted K-stable Fano varieties for any weight function}
Let $(X,\D)$ be a log Fano pair with a $\IT=\IG_m^r$-action, $M=\Hom(\IT,\IG_m), N=M^\vee$ be the weight, coweight lattices respectively. Let $\BP\seq M_\IR$ be the moment polytope of the $\IT$-action and $\DH_\BP$ be the DH measure of the $\IT$-action on $\BP$ (see for example \cite[Section 2.5 and 3.3]{MW23}). A continuous function $g_0: \BP \to \IR_{>0}$ is called a {\it weight function} if 
\begin{eqnarray}  
\label{Eqnarray. weight function}
\int_{\BP} \alpha_i \cdot g_0(\alpha) \DH_\BP(\dif \alpha) = 0, 
\end{eqnarray}
for any $1\le i\le r$. Similar to Definition \ref{Definition: weighted Ding stability}, one can define the $g_0$-weighted K-stability and Ding-stability of the log Fano $\IT$-pair $(X,\D)$. 
In the setting of $g$-optimal degenerations, we will choose  
\begin{eqnarray*}  
g_0(\alpha) = g'(-\la \alpha, \xi_0\ra),
\end{eqnarray*}
where $\xi_0$ 
is the minimizer of $\BH^g$ on $N_\IR$. We have the following easy consequence of Corollary \ref{Corollary: stable under g -optimal degeneration}, which gives some trivial examples answering Question \ref{Question: same g -optimal degenerations?} positively. 

\begin{cor}
\label{Corollary: stable under g -optimal degeneration 2}
Assume that $(X,\D)$ is $g_0$-weighted K-polystable for any weight function $g_0$. Then $(X,\D)$ is the $g$-optimal degeneration of itself for any function $g$ satisfying (\ref{Eqnarray: function g}). 
\end{cor}

Let $(X,\D)$ be a toric log Fano pair. Then $(X,\D)$ is $g_0$-weighted K-polystable for any weight function $g_0$. Indeed, any $\IT$-invariant valuation $v=\wt_\xi$ for some $\xi\in N_\IR$. Hence 
\begin{eqnarray*}  
\BD^{g_0}(\wt_\xi) = \frac{1}{\Bv^{g_0}}\int_{\BP} (-\la\alpha,\xi\ra)\cdot g_0(\alpha) \DH_\BP(\dif \alpha)= 0.  
\end{eqnarray*}
In particular, the $g$-optimal degenerations of $(X,\D)$ are always itself. 

The following non-trivial examples follow from \cite[Example 5.5]{Wang24}. 

\begin{thm}
Any Fano threefold $X$ in the families №2.28 and №3.14 of Mori-Mukai's list is $g_0$-weighted K-polystable for any weight function $g_0$. In particular, the $g$-optimal degenerations of $X$ are always $X$ itself for any function $g$ satisfying (\ref{Eqnarray: function g}). 
\end{thm}

\subsection{Non-trivial $g$-optimal degenerations}

The Fano threefolds in the family №2.23 of Mori-Mukai's list are K-unstable and admit discrete automorphism group \cite{MT22}. Hence they could not be weighted K-semistable and admit no $g_0$-soliton \cite[(1.3)]{HL23} for any weight function $g_0$. Their optimal degenerations were determined by \cite{MW24}. It is natural to ask what are their $g$-optimal degenerations for other functions $g$ satisfying (\ref{Eqnarray: function g}). 

Recall that any Fano threefold $X$ in №2.23 is obtained by blowing up the quadric threefold $Q$ along the complete intersection $C$ of a hyperplane section $H\in |\CO_Q(1)|$ and a quadric section $Q'\in |\CO_Q(2)|$. The family №2.23 is divided into two subfamilies by the smoothness of $H$, 
\begin{itemize}
    \item $X\in $ №2.23(a), if $H\cong \IP^1\times \IP^1$, 
    \item $X\in $ №2.23(b), if $H\cong \IP(1,1,2)$.
\end{itemize}
The optimal degeneration $X_0$ of $X$ in №2.23(a) is induced by the divisorial valuation $\ord_{H}$ by \cite[Corollary 1.4]{MW24}. Hence $X_0 = \Bl_C Q_0$ where $Q_0\seq \IP^4$ is the cone over a smooth quadric surface $H\seq \IP^3$, and $C\seq H\cong \IP^1\times \IP^1$ is a biconic curve (i.e. $C\in |\CO_{\IP^1\times \IP^1}(2,2)|$).

\begin{thm}
For any Fano threefold $X$ in family №2.23(a), the $g$-optimal degenerations are always $X_0$ for any function $g$ satisfying (\ref{Eqnarray: function g}). 
\end{thm}

\begin{proof}
We need to prove that $X_0$ is the $g$-optimal degeneration of $X$ for any function $g$ satisfying (\ref{Eqnarray: function g}). This is equivalent to $\BH^g_X$ being minimized by $a\cdot \ord_H$ for some $a\in \IR_{>0}$, hence is equivalent to $(X_0,a\cdot\xi)$ being $g'$-weighted K-polystable for some $a\in \IR_{>0}$, where $\xi\in N\cong \IZ$ whose filtration is a shift of $\CF_{\ord_{H}}$. We conclude by \cite[Example 5.7]{Wang24}, which says that $X_0$ is $g_0$-weighted K-polystable for any weight function $g_0$.  
\end{proof}

\section{Conflict of interest statement}
On behalf of all authors, the corresponding author states that there is no conflict of interest.

\bibliographystyle{alpha}
\bibliography{ref}

\end{document}